\documentclass[11pt]{article}

\usepackage[margin=1in]{geometry}
\usepackage{amsmath,amssymb,amsthm}
\usepackage{xcolor}
\usepackage{tikz}
\usepackage[hidelinks,unicode]{hyperref}
\hypersetup{pdftitle={0/1 integer programming over polytopes of Chv\'atal rank one is as hard as lattice problems},pdfauthor={Alberto Del Pia}}

\newtheorem{theorem}{Theorem}
\newtheorem{lemma}[theorem]{Lemma}
\newtheorem{proposition}[theorem]{Proposition}
\newtheorem{corollary}[theorem]{Corollary}
\newtheorem{question}{Question}
\newtheorem*{theoremA}{Theorem A}
\newtheorem*{theoremB}{Theorem B}
\newtheorem*{theoremC}{Theorem C}
\newtheorem*{theoremD}{Theorem D}
\theoremstyle{remark}
\newtheorem{remark}[theorem]{Remark}

\newcommand{\R}{\mathbb{R}}
\newcommand{\Z}{\mathbb{Z}}
\newcommand{\Q}{\mathbb{Q}}
\newcommand{\0}{\mathbf{0}}
\newcommand{\1}{\mathbf{1}}
\newcommand{\T}{^{\mathsf{T}}}
\newcommand{\conv}{\operatorname{conv}}
\newcommand{\poly}{\operatorname{poly}}
\newcommand{\fl}[1]{\left\lfloor #1 \right\rfloor}
\newcommand{\width}{\operatorname{width}}
\DeclareMathOperator{\LWE}{LWE}

\title{$0/1$ integer programming over polytopes of Chv\'atal rank one\\ is as hard as lattice problems}
\author{Alberto Del Pia\thanks{Department of Industrial and Systems Engineering \& Wisconsin Institute for Discovery, University of Wisconsin--Madison, Madison, WI, USA. E-mail: \texttt{delpia@wisc.edu}.}}
\date{October 23, 2026}

\begin{document}
\maketitle

\begin{abstract}
A rational polyhedron has Chv\'atal rank at most one if a single round of Chv\'atal--Gomory cuts yields its integer hull.
In 1973, Chv\'atal asked whether $0/1$ integer programs over polytopes of bounded Chv\'atal rank can be solved in polynomial time.
For polyhedra of bounded Chv\'atal rank, integer feasibility is in NP $\cap$ coNP, so it is unlikely to be NP-hard.
We answer Chv\'atal's question negatively already in its most restricted form: for Chv\'atal rank one, for integer feasibility, and even for the SAT polytopes of formulas whose clauses have at most three literals.
Our answer holds under either of two standard assumptions from lattice-based cryptography.
First, a polynomial-time algorithm for integer feasibility on such SAT polytopes of Chv\'atal rank one would solve bounded distance decoding with polynomial factors in deterministic polynomial time, contradicting the conjectured hardness of approximating lattice problems within polynomial factors.
Second, assuming the hardness of learning with errors, an average-case analog of bounded distance decoding, the problem is also hard on average, for a polynomial-time samplable distribution of formulas.
This also gives negative answers to related questions of Cornu\'ejols, Lee, and Li.
We first prove these results for parallelepipeds, where they rest on an elementary sufficient condition: a polyhedron has Chv\'atal rank at most one if its width is less than one along every row of some unimodular matrix.
For our parallelepipeds, such a matrix exists but is hard to find.
For SAT polytopes, we represent candidate lattice vectors by their residues modulo small integers, and binary arithmetic on additional $0/1$ variables checks that these vectors lie in the lattice.
\end{abstract}

\medskip
\noindent\textbf{Keywords:} Chv\'atal--Gomory cuts $\cdot$ Chv\'atal rank $\cdot$ $0/1$ integer programming $\cdot$ satisfiability $\cdot$ promise problems $\cdot$ lattice problems $\cdot$ learning with errors

\noindent\textbf{Mathematics Subject Classification (2020):} Primary 90C09 $\cdot$ 90C10. Secondary 11H06 $\cdot$ 90C57 $\cdot$ 90C60.

\section{Introduction}\label{sec:intro}

Let $P\subseteq\R^n$ be a rational polyhedron and let $P_I=\conv(P\cap\Z^n)$ be its \emph{integer hull}.
A \emph{Chv\'atal--Gomory cut} (CG cut) for $P$ is an inequality $c\T x\le\fl{\delta}$, where $c\in\Z^n$ and $c\T x\le\delta$ is valid for $P$ \cite{Gomory58,Chvatal73}.
The \emph{Chv\'atal closure} $P'$ of $P$ is the set of vectors satisfying all CG cuts for $P$.
We have $P_I\subseteq P'\subseteq P$, and $P'$ is again a rational polyhedron \cite{Schrijver80}.
Iterating the closure operation yields $P_I$ after finitely many steps, as shown by Chv\'atal \cite{Chvatal73} for polytopes and by Schrijver \cite{Schrijver80} for rational polyhedra.
The \emph{Chv\'atal rank} of $P$ is the number of steps needed.

Polyhedra of Chv\'atal rank zero are the \emph{integral} polyhedra, and recognizing them is coNP-complete \cite{PapadimitriouYannakakis90}.
Polyhedra of Chv\'atal rank at most one include several classical examples from combinatorial optimization \cite{Schrijver03}: the fractional matching and edge cover polytopes and, more generally, the fractional $b$-matching and $b$-edge cover polyhedra of a graph, the fractional stable set polytope of a t-perfect graph, and the strong fractional stable set polytope of an h-perfect graph.
More recent examples arise from a generalization of bidirected graphs \cite{DelPiaMusitelliZambelli18}, from the unit cube \cite{CornuejolsLee18}, and from binary polynomial optimization \cite{DelPiaDiGregorio21}.
In general, recognizing polyhedra of Chv\'atal rank at most one is NP-hard \cite{CornuejolsLi18,CornuejolsLeeLi20}.

Deciding whether $P\cap\Z^n\neq\emptyset$, which we call \emph{integer feasibility}, is NP-complete \cite[Theorem~18.1]{Schrijver86}.
If $P$ is integral, it is easy: $P$ contains an integral vector if and only if $P\neq\emptyset$, which can be decided in polynomial time by linear programming \cite[Theorem~13.4]{Schrijver86}.
Chv\'atal observed in 1973 that, for each fixed $k$, integer programs over polytopes of Chv\'atal rank at most $k$ admit a good characterization \cite[Section~7]{Chvatal73}.
Boyd and Pulleyblank showed in 1984 that, for each such $k$, integer feasibility is in NP $\cap$ coNP for polyhedra of Chv\'atal rank at most $k$ \cite[Theorem~5.4]{BoydPulleyblank09} (see also \cite[Theorem~23.5 and Corollary~23.5a]{Schrijver86}).
Chv\'atal also asked whether, for each fixed $k$, $0/1$ integer programs over polytopes of Chv\'atal rank at most $k$ can be solved in polynomial time \cite[Section~7]{Chvatal73}.

We consider the most restricted case of his question: Chv\'atal rank at most one, that is, $P'=P_I$, and integer feasibility in place of optimization.
In this case, it is elementary that the problem is in NP $\cap$ coNP.

\begin{question}\label{q:cube}
Let $P=\{x\in[0,1]^n: Cx\le d\}$ with $C$ and $d$ rational, and suppose $P'=P_I$.
Can we decide whether $P$ contains an integral vector in time polynomial in the encoding size of $(C,d)$?
\end{question}

Question~\ref{q:cube} is at the heart of this paper.
Cornu\'ejols and Li stated the same question for general rational polyhedra \cite{CornuejolsLi16,CornuejolsLi18}, and it is the main motivation of the systematic study of this class by Cornu\'ejols, Lee, and Li \cite{CornuejolsLeeLi20}.

\begin{question}\label{q:cll}
Let $P=\{x\in\R^n: Cx\le d\}$ with $C$ and $d$ rational, and suppose $P'=P_I$.
Can we decide whether $P$ contains an integral vector in time polynomial in the encoding size of $(C,d)$?
\end{question}

Cornu\'ejols, Lee, and Li state Question~\ref{q:cll} for polyhedra of Chv\'atal rank exactly one, and, for both questions, the versions for rank exactly one are equivalent to those above (Lemma~\ref{lem:exactlyone}).

We regard Questions~\ref{q:cube} and~\ref{q:cll} as \emph{promise problems}, with the promise $P'=P_I$, as in \cite{CornuejolsLi18,CornuejolsLeeLi20}.
An algorithm solves such a problem in polynomial time if it runs in polynomial time on all inputs and answers correctly on those that satisfy the promise, while its answers on other inputs may be arbitrary.

The promise matters.
Under the promise, $P\cap\Z^n=\emptyset$ if and only if $P'=\emptyset$.
Without it, deciding whether $P'=\emptyset$ is NP-complete, even for polyhedra containing no integral vector \cite{CornuejolsLi16,CornuejolsLeeLi20}, and separating over $P'$ is NP-hard \cite{Eisenbrand99}.
With the promise, NP-hardness is unlikely: since these promise problems are in NP $\cap$ coNP, a polynomial-time reduction from an NP-hard problem all of whose queries satisfy the promise would imply NP $=$ coNP \cite{GrollmannSelman88} (see also \cite[Theorem~7]{Goldreich06}).

Several problems in NP $\cap$ coNP were eventually shown to be polynomial, such as linear programming \cite[Theorem~13.4]{Schrijver86} and primality testing \cite{AgrawalKayalSaxena04}.
Yet no algorithm is known to benefit from the promise $P'=P_I$.
For instance, Cornu\'ejols, Lee, and Li prove a flatness theorem for compact convex sets with empty Chv\'atal closure, with an essentially tight bound, and observe that it yields a Lenstra-type algorithm \cite{Lenstra83} with running time $2^{O(n)}n^n$ times a polynomial in the encoding size \cite[Theorem~14, Proposition~15, and Section~3.3]{CornuejolsLeeLi20}.
This is slower than the randomized algorithm of Reis and Rothvoss for general integer programs, with running time $(\log n)^{O(n)}$ times a polynomial in the encoding size \cite{ReisRothvoss23}.
Similarly, branch-and-bound with variable branching does not benefit from the promise: it can require exponentially many nodes even on polytopes with empty Chv\'atal closure \cite{Jeroslow74}.

\paragraph{Our results.}
We first answer Question~\ref{q:cll} negatively, already for parallelepipeds, under two standard assumptions from lattice-based cryptography: that lattice problems such as the shortest vector problem are hard to approximate within polynomial factors, and that learning with errors, an average-case lattice problem, is hard.
Lattice problems are a natural source of hardness here: like integer feasibility under the promise $P'=P_I$, they are not expected to be NP-hard, since the decision versions of approximating the norm of a shortest nonzero vector, or the distance of a target vector from the lattice, within a factor $c\sqrt n$ for a suitable constant $c$, are in NP $\cap$ coNP \cite{AharonovRegev05}.
Under the same assumptions, we then answer Question~\ref{q:cube}, and with it Chv\'atal's question, negatively, even for SAT polytopes of formulas whose clauses have at most three literals (Theorems~C and~D).

Let $L\subseteq\R^n$ be a full-rank lattice and let $\lambda_1(L)$ be the Euclidean norm of a shortest nonzero vector of $L$.
In the \emph{bounded distance decoding problem} with factor $\alpha$ (BDD$_\alpha$) we are given a rational basis $B$ of $L$ and a rational target vector $t$ whose distance from $L$ is at most $\alpha\lambda_1(L)$, and we want to find a vector $v$ of $L$ closest to $t$, or equivalently the \emph{error} $t-v$.
In integer programming terms, we want to solve the integer least squares problem $\min\{\|Bx-t\|: x\in\Z^n\}$ under the promise that its optimal value is at most $\alpha\lambda_1(L)$.

\begin{theoremA}[see Proposition~\ref{cor:promise} and Theorem~\ref{thm:bdd}]
There is a deterministic polynomial-time reduction from BDD$_{1/(4n^2)}$ in dimension $n$ to integer feasibility on rational parallelepipeds in $\R^n$ with $P'=P_I$.
In particular, a polynomial-time algorithm for the latter gives a polynomial-time algorithm for BDD$_{1/(4n^2)}$.
\end{theoremA}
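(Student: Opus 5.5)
We plan to reduce BDD$_{1/(4n^2)}$ to a polynomial number of integer feasibility queries on parallelepipeds whose rows are given by a basis of the dual lattice. The sufficient condition announced in the abstract does the structural work: a polyhedron has Chv\'atal rank at most one if its width is less than one along every row of some unimodular matrix. Let $B$ be the basis and $t$ the target with $\operatorname{dist}(t,L)\le\alpha\lambda_1(L)$, where $\alpha=1/(4n^2)$. Integer points $x$ correspond to lattice vectors $Bx$. Let $D$ be the dual basis, $D=B^{-\mathsf T}$. We will use parallelepipeds of the form
\[P=\{x\in\R^n:\ |a_i\T(Bx-t)|\le r\|a_i\| \ (i=1,\dots,n)\}\]
for suitable integer vectors $a_i$. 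For the structure we need a good dual basis, namely a reduced basis $w_1,\dots,w_n$ of $L^*$ with $\|w_i\|$ bounded in terms of $\lambda_n(L^*)\le n/\lambda_1(L)$ (transference). Such a basis exists, for instance a Korkine--Zolotarev or Minkowski-type basis, but the reduction never computes it. This is the point that it ``exists but is hard to find''.

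The key steps are as follows.

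\textbf{(1) Promise.} Take $P=\{x: \|B x - t\|_\infty\text{-type box}\}$ intersected appropriately. The natural choice is $P=\{x:\ \|Bx-t\|\le R\}$ made into a parallelepiped by bounding $|u_j\T(Bx-t)|\le R$ along the rows $u_j$ of $B^{-1}$ scaled. More concretely, we use $P_R=\{x: |e_j\T B^{-1}\ldots|\}$. The cleanest route is to let the rows define a box in the coordinates $y=Bx-t$, with the box side $R$ chosen so that the box is contained in a Euclidean ball of radius less than $\lambda_1/2$. Then for any unimodular $U$ whose rows are $w_i\T B$ (integral, since $w_i\in L^*$), the width of $P$ along row $i$ is at most $2R\sqrt n\|w_i\|\le 2R\sqrt n\cdot n/\lambda_1$. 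With $R\le\lambda_1/(2n^2)$ this is below one. The factor $4n^2$ comes from this step, possibly with a $\sqrt n$ slack absorbed by $\|w_i\|\le\sqrt n\,\lambda_n(L^*)$-type bounds. By the sufficient condition, $P'=P_I$.

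\textbf{(2) Answering.} The box of radius $R\ge\alpha\lambda_1$ around $t$ contains the closest vector $v$, and it contains at most one lattice point because its diameter is less than $\lambda_1$. Feasibility queries can therefore recover $v$. To do so, we shrink the box by halving one facet at a time, a binary search on each coordinate of $x=B^{-1}v$, whose size is polynomially bounded. Each subparallelepiped is again contained in the original, so widths only decrease and the promise is preserved.

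The main obstacle is that the reduction does not know $\lambda_1$, so it cannot choose $R$ to satisfy both $R\ge\operatorname{dist}(t,L)$ and the width bound. We will try to handle this by guessing $R$ over a polynomial-size geometric grid $R=2^k$, running the procedure for each guess, and outputting the closest lattice vector found. Guesses with $R$ too large may violate the promise, giving arbitrary answers. Any candidate we obtain is a lattice vector, however, and can be verified, so taking the best candidate is safe. The guess $R\in[\alpha\lambda_1,2\alpha\lambda_1]$ satisfies the promise provided that $2\alpha$ still meets the width bound. This is why the constant $1/(4n^2)$ is taken with a factor-two margin.
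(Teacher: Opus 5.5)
Your promise argument is the paper's (Lemmas~\ref{lem:slab}, \ref{lem:directions}, and~\ref{lem:dualbasis}), up to two corrections. First, the bound $\|w_i\|\le n/\lambda_1(L)$ that you write first holds for $n$ linearly independent vectors of $L^*$, not for a basis. The $\sqrt n$ loss for a basis is what puts the threshold at $R<\lambda_1(L)/(2n^2)$, and this inequality must be strict. So the good guess is the power of two in $[\alpha\lambda_1(L),2\alpha\lambda_1(L))$, not in the closed interval. Second, the grid of radii needs explicit ends, for instance $\lambda_1(L)\ge2^{-\sigma}$ and $\rho\le\|t\|\le2^\sigma$, where $\sigma$ is the encoding size of $(B,t)$.

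Your handling of guesses that violate the promise, running every radius and keeping the closest verified lattice vector, differs from the paper's but is valid. The paper instead tries the radii in increasing order and stops at the first positive answer. This is safe because $Q_j\subseteq Q_{j^*}$ makes every query with $j\le j^*$ thin, hence answered correctly, with $Q_j\cap\Z^n\subseteq\{z^*\}$.

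The step that needs repair is the recovery of $v$. A binary search on a coordinate $x_i$ of $B^{-1}v$ intersects $Q$ with halfspaces $x_i\le c$. Their normal $e_i$ is in general not a facet normal of $Q$, since those are the rows of $B$. These queries satisfy the promise, by the last statement of Lemma~\ref{lem:slab}, but they are not parallelepipeds, as Theorem~A requires.

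Moving a facet of the box $t+R[-1,1]^n$ does keep parallelepipeds $\{x:\ell\le Bx\le u\}$ inside $Q$. But then you search on the rational coordinates of $v$ rather than of $B^{-1}v$. You need that $v_j\in d_j^{-1}\Z$, where $d_j$ is a common denominator of the $j$th row of $B$, to finish after polynomially many queries. With this, your route works.

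The paper avoids the issue with sublattices. It queries $Q(B^{(i,k)},t-(c_i+u)b_i,2^j)$, where $B^{(i,k)}$ is $B$ with its $i$th column multiplied by $2^k$. The integral vectors of this query correspond to those of $Q_j$ with $z_i\equiv c_i+u\pmod{2^k}$. Since $B^{(i,k)}$ is a basis of a sublattice of $L$, the query is again a thin parallelepiped, so all queries stay in one class. The same device also carries over to Theorem~\ref{thm:satbdd}, where the formulas of Proposition~\ref{prop:satlattice} only provide the fixed boxes $\mathcal B_h$, which cannot be cut along a single facet.
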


Combined with reductions of Lyubashevsky and Micciancio \cite{LyubashevskyMicciancio09}, Theorem~A has two further consequences (Corollary~\ref{cor:usvp}).
First, a polynomial-time algorithm for integer feasibility under the promise $P'=P_I$ would give a polynomial-time algorithm for uSVP$_{4n^2}$, that is, for finding a shortest nonzero vector of $L$ when every vector of $L$ of norm at most $4n^2\lambda_1(L)$ is an integer multiple of a shortest one.
Second, it would give a randomized polynomial-time algorithm for GapSVP$_\gamma$ with $\gamma=4n^{5/2}/\sqrt{\log n}$, that is, for approximating $\lambda_1(L)$ within a factor $\gamma$.

The best known algorithms for BDD, uSVP, and GapSVP with polynomial factors run in time $2^{O(n)}$ \cite{MicciancioVoulgaris13,ADRS15}.
It is conjectured that no polynomial-time algorithm, even a randomized or quantum one, approximates lattice problems such as the shortest vector problem within polynomial factors \cite[Conjectures~1 and~2]{MicciancioRegev09}.
This conjecture underlies lattice-based cryptography \cite{Regev09,Peikert16}.
Since GapSVP is the decision version of the shortest vector problem, Corollary~\ref{cor:usvp} shows that a polynomial-time algorithm for integer feasibility under the promise $P'=P_I$, even a randomized one, would contradict this conjecture.

The reduction of Theorem~A preserves the dimension (Theorem~\ref{thm:bdd}), and so do those of Lyubashevsky and Micciancio.
Hence, even an algorithm for integer feasibility under the promise $P'=P_I$ running in time $2^{o(n)}$ times a polynomial in the encoding size would give algorithms with the same running time for BDD and uSVP, and a randomized one for GapSVP (Corollary~\ref{cor:usvp}).

Theorem~A is a worst-case result: it concerns algorithms that are correct on every instance, and it does not show that hard instances are common, nor how to generate them.
From \emph{learning with errors} (LWE) \cite{Regev09}, the average-case problem on which much of lattice-based cryptography is built, we obtain a distribution of hard instances that can be sampled in polynomial time.
In LWE with modulus $q$, we are given a uniformly random matrix $A\in\Z_q^{n\times m}$ and $t\equiv A\T s+e\pmod q$, where $s\in\Z_q^n$ is uniformly random and $e\in\Z^m$ is a short random error, and we want to find $s$, or to distinguish $t$ from a uniformly random vector (decision-LWE).
Thus $t$ is a target vector close to the lattice of vectors $y\in\Z^m$ with $y\equiv A\T s\pmod q$ for some $s$, and LWE can be viewed as an average-case version of BDD.

\begin{theoremB}[see Proposition~\ref{cor:promise} and Theorem~\ref{cor:lwe}]
For every sufficiently large $n$, there is a polynomial-time samplable distribution of parallelepipeds in dimension $O(n\log n)$, given by integral constraints with coefficients bounded by a polynomial in $n$, that satisfy $P'=P_I$ with high probability.
A randomized polynomial-time algorithm that decides integer feasibility correctly with probability at least $2/3$ on these distributions gives one for decision-LWE in dimension $n$ with a polynomial modulus and Gaussian errors of inverse-polynomial rate.
\end{theoremB}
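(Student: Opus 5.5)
The plan is to build the parallelepipeds from the LWE lattice and certify $P'=P_I$ with the sufficient condition announced in the abstract: a polyhedron has Chv\'atal rank at most one if its width is less than one along every row of some unimodular matrix. I take this as the content of Proposition~\ref{cor:promise}. Given an LWE instance $(A,t)$ with $A\in\Z_q^{n\times m}$, $m=O(n\log q)=O(n\log n)$ and $q=\poly(n)$, consider the $q$-ary lattice $L=\{y\in\Z^m: y\equiv A\T s \pmod q\}$. Choose an integral basis $B$ of $L$ in Hermite normal form, so it is computable in polynomial time with entries bounded by $q$. Then set
\[
P=\{x\in\R^m: |(B^{-1}(Bx-t))_i|\le \beta_i \ \text{ for all } i\},
\]
or, better, a parallelepiped aligned with a \emph{dual} basis. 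The intent is that $P\cap\Z^m$ is nonempty exactly when $t$ lies within the error bound of $L$. Concretely, if $t=v+e$ with $v\in L$, the point $x=B^{-1}v$ should lie in $P$, while in the uniform case no lattice point should be close enough.

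The key point, as in Theorem~\ref{thm:bdd}, is the following. Because LWE is a BDD instance with a short error, $L^*$ (the scaled dual $q$-ary lattice $\{w: Aw\equiv 0\}$ up to scaling) contains short vectors. Whenever $\|e\|\cdot\|w\|$ is small, these vectors reveal $\langle w,t\rangle$ modulo the lattice. I define $P$ as the set of $x$ with $|\langle w_i, Bx-t\rangle - c_i|\le \varepsilon_i$ along an (unknown) short dual basis $W$, and translate these into rows of a unimodular matrix acting on $x$. Each width is then less than one, so Proposition~\ref{cor:promise} gives $P'=P_I$. To make $P$ samplable without knowing $W$, the constraints must be expressible from public data. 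For this I would plant the short dual basis: sample $A$ together with a trapdoor, i.e.\ a short basis of $\Lambda_q^\perp(A)$, via standard trapdoor generation (Ajtai/Micciancio--Peikert). The resulting $A$ is statistically close to uniform, so decision-LWE remains hard on it. The sampler then publishes the inequalities in a rerandomized form, multiplying by a random unimodular matrix, so that they do not directly reveal the trapdoor. This is still only a planted-trapdoor construction; the hardness comes from the LWE target $t$, not from hiding the trapdoor, so the publication form only needs to be polynomial-size with coefficients bounded by $\poly(n)$ after scaling.

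Next comes the correctness analysis. In the LWE case, $e$ is Gaussian with width $\alpha q$ and inverse-polynomial $\alpha$. With high probability $\|e\|\le \alpha q\sqrt m$, so $t-e\in L$ yields an integral point of $P$ once the $\varepsilon_i$ are chosen as $\|w_i\|\alpha q\sqrt m$ plus slack. In the uniform case, I use a counting/union bound: there are $q^m$ targets and at most $q^n\cdot\prod(2\varepsilon_i+1)$ near-lattice targets. This probability is negligible when $m$ is a large enough multiple of $n\log q$ relative to the error rate, so $P\cap\Z^m=\emptyset$ w.h.p. The promise $P'=P_I$ holds w.h.p.\ in both cases, since it depends only on the widths.

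The reduction itself is then direct. A decider that is correct with probability $2/3$ on both distributions distinguishes them with constant advantage, which solves decision-LWE.

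The main obstacle is reconciling the width-less-than-one condition with feasibility in the planted case. The widths must be below one along a unimodular system, yet large enough to contain the true error. This needs $\|w_i\|\cdot\alpha q\sqrt m<1/2$, so the trapdoor basis must be short, of length $O(\sqrt{m})$. Then $\alpha<1/(q\,m)$ up to constants, which is an inverse-polynomial rate and is consistent with the statement. I would first try the Micciancio--Peikert gadget trapdoor and check these parameters explicitly.
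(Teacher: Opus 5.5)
There is a genuine gap: your parallelepiped is defined through the short dual basis $W$, and no version of this gives a reduction. The decision-LWE algorithm is handed $(A,t)$ with $A$ uniformly random. It cannot plant a trapdoor in $A$, so it cannot write down the constraints $|\langle w_i,Bx-t\rangle-c_i|\le\varepsilon_i$.

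If instead the sampler plants the trapdoor and publishes these constraints, the instances are easy for everyone. The facet normals of your parallelepiped are, up to scaling, the rows $u_i\T=w_i\T B$ of the unimodular matrix $U$, or of $UV$ after a unimodular change of variables $x=Vx'$. Facet normals can be read off from any inequality description. Since the width along each $u_i$ is less than one, rounding each $u_i\T x$ to the unique integer in its interval decides feasibility. This is Babai round-off with a trapdoor, which is exactly why a short basis of $\Lambda_q^\perp(A)$ solves LWE. So your claim that hardness comes from $t$ rather than from hiding the trapdoor is not right: once the trapdoor is visible, $t$ is easy to decode.

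There are also two smaller errors.
\begin{itemize}
\item Your parameter computation drops a factor $1/q$. Since $L^*=\frac1q\Lambda_q^\perp(A)$, a trapdoor of length $O(\sqrt m)$ gives $\|w_i\|=O(\sqrt m/q)$, and the condition is $\alpha=O(1/m)$. Your bound $\alpha<1/(qm)$ would give $\alpha q<1/m$, so the errors would be zero with overwhelming probability.
\item Your first candidate $P$ is a coordinate box around $B^{-1}t$, since $B^{-1}(Bx-t)=x-B^{-1}t$, and is not what is needed.
\end{itemize}

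The missing idea is that Lemma~\ref{lem:slab} needs only the \emph{existence} of a unimodular matrix with small widths, not knowledge of it. Take the primal parallelepiped $Q(B,t,r)=\{x:\|Bx-t\|_\infty\le r\}$ with the public Hermite normal form basis $B$ of $\Lambda_q(A\T)$.
\begin{itemize}
\item By Lemma~\ref{lem:directions}, every basis $w_1,\dots,w_m$ of $L^*$ gives a unimodular matrix with rows $w_i\T B$, along which $Q$ has width $2r\|w_i\|_1$.
\item By Banaszczyk's transference theorem, some basis has $\|w_i\|_1\le m^2/\lambda_1(L)$ (Lemma~\ref{lem:dualbasis}). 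So $Q$ satisfies the promise whenever $r<\lambda_1(L)/(2m^2)$, whatever $t$ is.
\item The short dual vectors come from the random lattice, not from the shortness of $e$. For prime $q$ and uniform $A$, a union bound over $s$ gives $\lambda_1(\Lambda_q(A\T))>q/8$ with probability at least $1-q^n2^{-m}$ (Lemma~\ref{lem:lambda1}).
\end{itemize}
With $q\in[n^3,2n^3]$, $m=3n\lceil\log_2q\rceil$, and $r=\lfloor(q-1)/(16m^2)\rfloor$, your last two steps then go through for this $Q$. It contains the integral vector $B^{-1}(t-e)$ whenever $\|e\|_\infty\le r$. For a uniform instance, it contains an integral vector with probability at most $q^n(2r+1)^m/q^m\le2^{-m}$.
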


Combined with a result of Regev \cite{Regev09}, Theorem~B has a further consequence (Corollary~\ref{cor:regev}): a randomized polynomial-time algorithm that decides integer feasibility correctly with probability at least $2/3$ on these distributions would give a polynomial-time quantum algorithm for GapSVP$_\gamma$ and SIVP$_\gamma$ on $n$-dimensional lattices with $\gamma=\tilde O(n^3)$, where $\tilde O$ hides polylogarithmic factors.
Here SIVP$_\gamma$ asks for $n$ linearly independent vectors of the lattice whose largest norm is at most $\gamma$ times the smallest possible.
Such a quantum algorithm would contradict the conjecture that no polynomial-time quantum algorithm approximates lattice problems within polynomial factors \cite[Conjecture~2]{MicciancioRegev09}.

Theorems~A and~B hold even under a stronger promise, that every rational subpolyhedron of $P$ has Chv\'atal rank at most one (Proposition~\ref{cor:promise} and Corollary~\ref{cor:usvp}).
Unlike $P'=P_I$, this promise is preserved by branching and by adding cutting planes, since both replace $P$ by a rational subpolyhedron.
It is strictly stronger than $P'=P_I$, even for $0/1$ polytopes (Remark~\ref{rem:croppedcube}).

Theorems~A and~B are incomparable.
The negative answer from Theorem~A relies on the conjecture only for classical algorithms, and the reduction of Theorem~A preserves the dimension.
The negative answer that we derive from Theorem~B relies on the conjecture for quantum algorithms, and the reduction of Theorem~B increases the dimension from $n$ to $O(n\log n)$.
However, Theorem~B gives hard instances on average, with integer coefficients bounded by a polynomial in the dimension, so it also rules out pseudo-polynomial algorithms.
Its distribution also gives test instances for integer programming solvers.
With high probability, they satisfy the promise, and their answers are known to the generator: those in which $t$ comes from LWE contain an integral vector that it can compute, and those in which $t$ is uniformly random contain none.

\paragraph{The idea.}
The proofs of Theorems~A and~B rest on an elementary sufficient condition for Chv\'atal rank at most one (Lemma~\ref{lem:slab}): if $P$ has width less than one along the rows of a unimodular matrix $U$, then $P'=P_I$, and $P_I$ is obtained by adding at most $2n$ CG cuts.
We apply it to the rational parallelepipeds $\{x\in\R^n:\|Bx-t\|_\infty\le r\}$ with $r>0$, where $B$ is a basis of a lattice $L$ and $t$ is a target vector (Figure~\ref{fig:parallelepiped}).
Their integral vectors $x$ correspond, via $x\mapsto Bx$, to the vectors of $L$ within $\ell_\infty$-distance $r$ of $t$.
The unimodular matrices are exactly the matrices with rows $w_1\T B,\dots,w_n\T B$, where $w_1,\dots,w_n$ is a basis of the dual lattice $L^*=\{w\in\R^n: w\T y\in\Z\text{ for all }y\in L\}$, and the width along $B\T w_i$ is $2r\|w_i\|_1$.
The dual lattice $L^*$ has a basis with $\ell_1$-norms at most $n^2/\lambda_1(L)$, by a transference theorem of Banaszczyk \cite[Theorem~2.1]{Banaszczyk93} (Lemma~\ref{lem:dualbasis}).
Hence, for every $t$, these parallelepipeds and all their rational subpolyhedra have Chv\'atal rank at most one whenever $r<\lambda_1(L)/(2n^2)$, since widths can only decrease on subpolyhedra.
We call such parallelepipeds \emph{thin}.

For Theorem~A, given an instance $(B,t)$ of BDD$_{1/(4n^2)}$, we try radii $r$ that are powers of two, and one of them gives a thin parallelepiped, which satisfies the promise by Proposition~\ref{cor:promise} and contains the integral vector corresponding to the closest vector.
We then find its entries bit by bit, with queries for parallelepipeds defined by sublattices of $L$, which are thin as well.

For Theorem~B, given an instance $(A,t)$ of decision-LWE, the lattice of the vectors $y\in\Z^m$ with $y\equiv A\T s\pmod q$ for some $s$ depends only on $A$, which is uniformly random, so with high probability it has no short nonzero vectors (Lemma~\ref{lem:lambda1}).
Hence $(A,t)$ yields a single parallelepiped that is thin, and so satisfies the promise, with high probability.
If $t$ comes from LWE, then with high probability the error is short, and so this parallelepiped contains an integral vector.
If $t$ is uniformly random, the parallelepiped contains none with high probability, by a counting argument.
Thus a single call to an algorithm for integer feasibility distinguishes the two cases.

\paragraph{Easy with a short dual basis, hard without.}
For thin parallelepipeds, the promise holds because the dual lattice has a basis with small $\ell_1$-norms, and knowing such a basis makes the problem easy.
With such a basis, Babai's round-off algorithm \cite{Babai86} finds the integral vector if there is one, and otherwise the basis gives a \emph{refutation}, that is, a set of CG cuts with empty intersection (Remark~\ref{rem:refutation}).
In the variables $Ux$, where $U$ has rows $w_1\T B,\dots,w_n\T B$, the parallelepiped has width less than one along every coordinate direction, so each variable takes at most one integer value and branch-and-bound solves it with $O(n)$ nodes.
This parallels the integer programs of Aardal, Hurkens, and Lenstra \cite{AardalHurkensLenstra00} and of Pataki, Tural, and Wong \cite{PatakiTuralWong10}, which are difficult for branch-and-bound in the original variables but much easier after a reformulation computed by basis reduction.
Here, however, finding such a basis is as hard as BDD$_{1/(4n^2)}$ for the parallelepipeds of Theorem~A, and as hard as decision-LWE for those of Theorem~B.

In proof complexity terms, CG cuts form the Cutting Planes proof system \cite{CookCoullardTuran87}, and finding a refutation of size polynomial in that of a shortest one is NP-hard in general \cite{GKMP20}.
Under the promise $P'=P_I$, a rational polyhedron without integral vectors has $P'=\emptyset$.
Since $P'$ is the intersection of finitely many CG cuts \cite{Schrijver80}, Helly's theorem then gives a refutation with at most $n+1$ CG cuts.
For our parallelepipeds, however, finding any refutation is as hard as BDD$_{1/(4n^2)}$ and decision-LWE (Remark~\ref{rem:refutation}).

\paragraph{Back to the unit cube.}
We now return to Question~\ref{q:cube}, through SAT polytopes.
For a formula $F$ in conjunctive normal form on variables $x_1,\dots,x_N$, the \emph{SAT polytope} $P(F)$ is the set of vectors $x\in[0,1]^N$ that satisfy $\sum_{i\in I}x_i+\sum_{i\in J}(1-x_i)\ge1$ for every clause of $F$, where $I$ and $J$ are the sets of indices of the variables that appear in the clause unnegated and negated, respectively.
Its integral vectors are the satisfying assignments of $F$.
Cornu\'ejols, Lee, and Li also asked whether satisfiability can be decided in polynomial time for formulas whose SAT polytope has Chv\'atal rank exactly one \cite[Open Question~2]{CornuejolsLeeLi20}.
Up to the equivalence between rank exactly one and rank at most one (Lemma~\ref{lem:exactlyone}), this is the special case of Question~\ref{q:cube} for SAT polytopes.
Under the promise $P(F)'=P(F)_I$, the problem is easy if every clause has at most two literals, since then satisfiability is polynomial even without the promise \cite{AspvallPlassTarjan79}.
In this case, the Chv\'atal closure of the SAT polytope is also well understood \cite{GerardsSchrijver86}.
The problem is also easy if every clause has at least three literals, since then the promise implies that $F$ is satisfiable \cite[Remark~3]{CornuejolsLeeLi20}.
Cornu\'ejols, Lee, and Li point out that the remaining case consists of formulas with both clauses of two literals and clauses of at least three literals \cite[Section~2.1]{CornuejolsLeeLi20}.
We show that Theorems~A and~B extend, with different parameters, to formulas whose clauses have at most three literals, and the formulas in our reductions are of this remaining kind.

\begin{theoremC}[see Proposition~\ref{prop:satlattice} and Theorem~\ref{thm:satbdd}]
There is a deterministic polynomial-time reduction from BDD$_{1/(4n^{5/2})}$ in dimension $n$ to satisfiability of formulas whose clauses have at most three literals and whose SAT polytopes satisfy $P(F)'=P(F)_I$.
In particular, a polynomial-time algorithm for the latter gives a polynomial-time algorithm for BDD$_{1/(4n^{5/2})}$.
\end{theoremC}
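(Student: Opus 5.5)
The plan is to reduce to the parallelepiped setting of Theorem~A and then encode the thin parallelepipeds as SAT polytopes whose integer points still correspond to lattice vectors near $t$. The Chv\'atal-rank argument is Lemma~\ref{lem:slab} applied to the lifted polytope.

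Start from a BDD$_{1/(4n^{5/2})}$ instance $(B,t)$. First apply LLL to bound the coordinates of the closest vector's coefficient vector $x$. We search over radii $r$ (powers of two) and, for the bit-by-bit recovery, over sublattices, exactly as in the proof of Theorem~\ref{thm:bdd}. For each query, we build a $3$-CNF formula $F$. Its variables are of two kinds. The \emph{residue} variables encode each coordinate $x_j$ by its residues modulo small primes $p_1,\dots,p_k$, whose product exceeds the range of $x_j$; each residue is written in one-hot form, with clauses of two literals enforcing "at most one" and a clause of more literals, split by auxiliary variables into clauses of three literals, enforcing "at least one". The \emph{auxiliary} variables carry out binary arithmetic, namely CRT reconstruction, the products $Bx$, and the comparisons $|(Bx-t)_i|\le r$. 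These are implemented by Tseitin-style adder and comparator gadgets with clauses of at most three literals.

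Each satisfying assignment then determines a unique integral $x$ with $\|Bx-t\|_\infty\le r$, and conversely. The auxiliary variables are functionally determined, so satisfiability of $F$ answers the parallelepiped query.

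For the promise $P(F)'=P(F)_I$, I would exhibit a unimodular matrix $U$ in the variables of $F$ along whose rows $P(F)$ has width less than one, and then invoke Lemma~\ref{lem:slab}. The natural choice uses the dual-basis rows $w_i\T B$ from Lemma~\ref{lem:dualbasis}, now expressed as integer linear forms in the one-hot residue variables: $x_j$ equals an integral combination $\sum_{\ell,a} c_{j\ell a} y_{j\ell a}$ modulo $M=\prod p_\ell$. These forms are completed by rows that are unit vectors or differences of variables, whose width is below one because the gadget clauses force the relevant variables to be nearly determined by the others. This is the step I expect to be the main obstacle. LP solutions can take fractional one-hot residues, so the linear form for $x_j$ is only determined up to multiples of $M$, and the lifted widths pick up extra factors from the encoding.

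To control this, I would make the arithmetic clauses themselves imply linear constraints that pin $Bx$ within the parallelepiped. The slab widths then inherit the bound $2r\|w_i\|_1$ times an extra $\sqrt n$ factor coming from passing between $\ell_\infty$ and $\ell_2$ in the residue reconstruction. This accounts for BDD$_{1/(4n^{5/2})}$ instead of $1/(4n^2)$. If exact unimodular completion proves awkward, the fallback is to verify $P(F)'=P(F)_I$ directly: show that every CG cut of the lifted parallelepiped, projected via the residue map, is implied, and that the auxiliary part is a network-matrix system and hence already integral. Finally, the polynomial size of $F$ follows because $k=O(\log)$ of the encoding size and the primes are polynomially bounded. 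The whole reduction is deterministic, which gives the claimed polynomial-time algorithm for BDD$_{1/(4n^{5/2})}$.
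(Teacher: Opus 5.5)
Your outer reduction (power-of-two search, bit-by-bit recovery through sublattices, one-hot residues modulo small coprime moduli, binary arithmetic on auxiliary variables) has the same shape as the paper's. Encoding the coefficient vector rather than the error is not where the difficulty lies. The gap is the promise, at exactly the step you flag, and neither of your remedies closes it.

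\emph{Clause gadgets do not enforce arithmetic at fractional points.} With Tseitin-style gadgets the arithmetic holds only at $0/1$ points. Take a standard clause encoding of a full adder with inputs $a,b,c$, sum $s$ and carry $o$, the sum taken through an auxiliary XOR bit $d$. The values $a=b=c=d=\tfrac12$ satisfy every clause for all $s,o\in[0,1]$, so the gadget does not enforce $a+b+c=s+2o$ at fractional points, and nothing in your plan shows that one round of CG cuts would. For the same reason:
\begin{itemize}
\item such gadget polytopes are not integral, so the network-matrix fallback fails;
\item they do not nearly determine the auxiliary variables, so completing the forms $w_i\T B$ by unit vectors and differences of width below one is unjustified;
\item your one-hot groups already fail: for $p\ge4$, $\sum_ay_a\le1$ is not a rank-one cut of the pairwise at-most-one $2$-clauses (the all-$\tfrac13$ point survives), so one round need not even put the residue vectors into the simplex, as the width bound requires.
\end{itemize}
The paper instead writes every gate with the extended formulation of the convex hull of its solutions. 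These equations, like the one-hot constraints, are exact-one equations. They are split into exact-one equations with at most three literals, whose new variables are integral affine functions of the old ones. Each three-literal equation is encoded by a $3$-clause and three $2$-clauses, whose sum gives the CG cut $\ell_1+\ell_2+\ell_3\le1$. Hence $P(F)'$ lies in a face on which every gate identity holds linearly (Lemmas~\ref{lem:exactone} and~\ref{lem:circuit}(a)). Since the closure of a face is the face intersected with the closure, CG cuts for this face still count as one round. No unimodular matrix is needed: the cuts only fix the free one-hot variables, and Lemma~\ref{lem:circuit}(b) then determines the rest of the vector.

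\emph{A short dual basis does not fix the residues.} Even with these identities, fixing the forms $w_i\T B\hat x$ only pins the reconstructed vector $\hat x=V-\Pi\kappa$, which fractional residues and quotients can still satisfy, as you note. Pinning $Bx$ inside the parallelepiped does not change this. The missing idea is to take, for each coordinate $i$ and modulus $p_j$, a dual lattice vector $w$ \emph{close to} $e_i/p_j$ rather than a short one. Banaszczyk's covering-radius bound gives $\|w-e_i/p_j\|_1\le m^{3/2}/(2\lambda_1(M))$ (Lemma~\ref{lem:dualcover}). Consider the $p_j+1$ functions $\psi_l=\sum_af_l(a)x_{i,j,a}+(w-e_i/p_j)\T c(x)$, with $f_l(a)=a/p_j$ for $a<l$ and $f_l(a)=a/p_j-1$ for $a\ge l$. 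On the face they have three properties:
\begin{itemize}
\item they are integral linear forms plus a common constant, since $(Hc(x)-d)/D$ and $c_i(x)/p_j-\sum_a(a/p_j)x_{i,j,a}$ are integral linear forms there;
\item they have width at most $(1-1/p_j)+\|w-e_i/p_j\|_1(2^h-1)<1$;
\item their consecutive differences are the variables $x_{i,j,l}$.
\end{itemize}
So one round of CG cuts fixes every one-hot variable (Proposition~\ref{thm:crt}). In your coefficient encoding the analogous vector would be some $w\in L^*$ close to $B^{-\mathsf T}e_j/p_\ell$.

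This, and not the dual-basis bound $n^2$ times an $\ell_\infty$-to-$\ell_2$ factor, is where $n^{5/2}$ comes from. The condition is $\lambda_1(M)>m^{3/2}p_{\max}(2^h-1)/2$, and $p_{\max}$ must be of order $n$ because the encoded integers are only bounded by $2^{O(n)}$.
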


\begin{theoremD}[see Proposition~\ref{prop:satlattice} and Theorem~\ref{thm:satlwe}]
For every sufficiently large $n$, there is a polynomial-time samplable distribution of formulas whose clauses have at most three literals, with $O(n^2\log^4n)$ variables, whose SAT polytopes satisfy $P(F)'=P(F)_I$ with high probability.
A randomized polynomial-time algorithm that decides satisfiability correctly with probability at least $2/3$ on these distributions gives one for decision-LWE in dimension $n$ with a polynomial modulus and Gaussian errors of inverse-polynomial rate.
\end{theoremD}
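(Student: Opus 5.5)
We take the decision-LWE instances behind Theorem~B and encode each resulting parallelepiped membership question as a $3$-CNF formula, in the spirit of the abstract: candidate lattice vectors are represented by their residues modulo small integers, and binary arithmetic on extra $0/1$ variables checks lattice membership. Concretely, let $(A,t)$ be an LWE instance with polynomial modulus $q$ and dimension $m=\poly(n)$. A satisfying object should be $s\in\Z_q^n$ together with the errors $e_j=t_j-(A\T s)_j \bmod q$, with each $|e_j|\le r$ for a radius $r=\poly(n)$ below the thinness threshold. We encode $s$ and each $e_j$ in binary, using $O(\log q)=O(\log n)$ bits each. We then verify the congruences $t_j\equiv\sum_i A_{ij}s_i+e_j\pmod q$ by binary adder circuits, translated to $3$-clauses with Tseitin auxiliary variables (full adders give clauses of at most three literals). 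Each row needs $n$ additions of $O(\log n)$-bit numbers, and the modular reductions (or a residue/CRT representation) bring the count to $O(n\log^{O(1)}n)$ variables per row. Summed over the $m=O(n\log n)$ rows, this gives $O(n^2\log^4 n)$ variables, matching Theorem~D. Sampling is clearly polynomial time.

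Correctness of the answers follows Theorem~\ref{cor:lwe}. If $t$ comes from LWE, then the encoding of $(s,e)$, extended to the auxiliary bits, satisfies $F$ with high probability. If $t$ is uniform, a satisfying assignment would yield a lattice vector within $\ell_\infty$-distance $r$ of $t$, and the counting argument shows that none exists with high probability.

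The main obstacle is the promise $P(F)'=P(F)_I$, which I would establish via Proposition~\ref{prop:satlattice}, presumably stated as a structural criterion. The route is Lemma~\ref{lem:slab}: exhibit a unimodular $U$ such that $P(F)$ has width $<1$ along every row of $U$. Under thinness (Lemma~\ref{lem:lambda1} with high probability), the short dual basis of the LWE lattice from Lemma~\ref{lem:dualbasis} supplies rows in which the integer combinations of the bit variables representing the lattice point have small width over the LP relaxation.

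The delicate part is that the LP relaxation of the adder clauses is very weak: fractional points can set all auxiliary bits to $1/2$. I would therefore design the gadgets so that every auxiliary variable is bounded in some unimodular coordinate system with width $<1$ on $P(F)$. This can be done by completing $U$ with triangular rows expressing each carry or sum bit minus an affine function of its inputs; the $3$-clauses of a full adder force such a difference into an interval of length less than one. The parameter loss from $n^2$ to $n^{5/2}$ in Theorem~C suggests that the widths incur an extra $\sqrt n$ factor, coming from $\ell_2$ versus $\ell_1$ norms of the dual vectors. I expect checking this width bound for every row, fractional auxiliaries included, to be the hardest step.

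Given the promise, Theorem~D follows exactly as in Theorem~B: one satisfiability query distinguishes LWE samples from uniform ones with advantage bounded away from zero, and the probability-$2/3$ guarantee transfers after accounting for the small failure probabilities.
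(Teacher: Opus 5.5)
The promise $P(F)'=P(F)_I$, which you rightly single out as the crux, is not established, and the route you sketch fails. Your correctness, counting, sampling, and probability-transfer steps are fine.

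\textbf{The gadget step is false.} Your fix for the weak relaxation claims that the clauses of an adder confine ``output bit minus an integral affine function of its inputs'' to an interval of length less than one. This cannot hold. The clause polytope of a gadget contains all its integral input/output combinations, and XOR, AND and majority are not affine, so such a difference takes two distinct integer values there.

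\textbf{The relaxation also breaks the identities that do hold on all gadget solutions.} Take a half adder $s=a\oplus b$, $c=a\wedge b$ with its Tseitin clauses. The points $a=b=c=\tfrac12$, $s=1$ and $a=b=\tfrac12$, $s=c=0$ are both feasible, and they give $s+2c-a-b=1$ and $-1$.

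\textbf{This breaks your lattice rows as well.} Since $M=\Lambda_q(A\T)\supseteq q\Z^m$, we have $M^*\subseteq\frac1q\Z^m$. So $w\T(t-e)$ becomes an integral row only after you substitute the membership identity $t-e=A\T s-q\kappa$, which turns it into $(Aw)\T s-(qw)\T\kappa$ in the bits. At fractional points of $P(F)$ this identity, and with it the small width, is lost.

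\textbf{Even with the identity, the bits are not fixed.} Width below one along dual-basis rows pins down only the real vector $e$. It fixes neither its binary digits (different fractional digit vectors represent the same number) nor the bits of $s$. Fixing the digit of weight $2^l$ in the $j$th error entry needs a vector of $M^*$ within $\ell_1$-distance of order $2^{-l}/r$ of the $j$th unit vector divided by $2^{l+1}$. For the top digits this costs an extra factor of order $r$ in the required $\lambda_1$.

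\textbf{What the paper does instead.} It does not apply Lemma~\ref{lem:slab} to $P(F)$ at all; the argument has four parts.
\begin{itemize}
\item It checks membership through a parity-check matrix $H$ with $\Lambda_q^\perp(H)=\Lambda_q(A\T)$ (Lemma~\ref{lem:kernel}), so $s$ never appears.
\item It encodes each error entry by one-hot residues modulo small pairwise coprime moduli. Every arithmetic step is written with the extended formulation \eqref{eq:extform}, so the arithmetic identities hold as linear identities on the whole exact-one face (Lemmas~\ref{lem:circuit}(a) and~\ref{lem:crtcircuit}(a)).
\item The CG cuts $\ell_1+\ell_2+\ell_3\le1$, obtained by summing the pairwise clauses, already put $P(F)'$ inside that face. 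The Chv\'atal closure of a face is the face intersected with the closure (Lemma~\ref{lem:exactone}).
\item On the face, take $w\in M^*$ within $\ell_1$-distance $m^{3/2}/(2\lambda_1(M))$ of the unit vector $e_i$ divided by $p_j$. It exists by Banaszczyk's covering-radius transference (Lemma~\ref{lem:dualcover}), not by a short dual basis. It yields integral functions $\psi'_l$ of width less than one whose consecutive differences are the one-hot variables. So CG cuts fix every free variable, and Lemma~\ref{lem:circuit}(b) then forces all remaining variables, with no unimodular matrix needed.
\end{itemize}
Your proposal is missing these ideas: extended formulations in place of clausal gadgets, the passage to the exact-one face, and one-hot residues fixed by dual vectors close to $e_i/p_j$.
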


Theorems~C and~D answer the question of Cornu\'ejols, Lee, and Li on SAT polytopes negatively.
Since SAT polytopes lie in the unit cube, they also answer Question~\ref{q:cube} negatively, and with it Chv\'atal's question.
Thus Chv\'atal's question has a negative answer already in its most restricted form: for Chv\'atal rank one, for integer feasibility, and even for SAT polytopes.

Like Theorems~A and~B, Theorems~C and~D hold even under the stronger promise that every rational subpolyhedron of $P(F)$ has Chv\'atal rank at most one (Proposition~\ref{prop:satlattice} and Corollary~\ref{cor:satusvp}).
They also hold for set partitioning polytopes with at most three ones in each row (Remark~\ref{rem:setpart}).

Theorems~C and~D are also incomparable.
The negative answer from Theorem~C relies on the conjecture only for classical algorithms.
As with Theorem~A, the reductions of Lyubashevsky and Micciancio show that a polynomial-time algorithm for satisfiability under the promise would give polynomial-time algorithms for uSVP$_{4n^{5/2}}$ and, with randomization, for GapSVP$_\gamma$ with $\gamma=4n^3/\sqrt{\log n}$ (Corollary~\ref{cor:satusvp}).
Unlike that of Theorem~A, the reduction of Theorem~C does not preserve the dimension, since the number of variables of its formulas depends on the encoding size of the input.
The negative answer that we derive from Theorem~D relies on the conjecture for quantum algorithms, through a result of Regev, with $\gamma=\tilde O(n^{5/2})$ (Corollary~\ref{cor:satregev}).
However, Theorem~D gives hard instances on average, and, as with Theorem~B, its distribution gives test instances with known answers, here for SAT solvers.

SAT polytopes are rational polytopes, so Theorems~C and~D also give hardness for general polytopes, but they do not imply Theorems~A and~B: Theorem~A has the better factor $1/(4n^2)$ and preserves the dimension, and the polytopes of Theorem~B have dimension $O(n\log n)$ instead of $O(n^2\log^4n)$.
In the other direction, Theorem~D has an advantage over Theorem~B: it gives the better factor $\tilde O(n^{5/2})$ for quantum algorithms, instead of $\tilde O(n^3)$.
Moreover, since SAT polytopes have coefficients in $\{0,\pm1\}$, even a pseudo-polynomial algorithm for them would contradict the conjecture for classical algorithms, through Theorem~C, whereas for parallelepipeds our argument uses Theorem~B, and so the conjecture for quantum algorithms.

\paragraph{The idea for SAT polytopes.}
The proofs of Theorems~C and~D need a different construction, since the promise for our SAT polytopes does not follow directly from Lemma~\ref{lem:slab}.
The satisfying assignments of our formulas correspond to the vectors of an integral lattice in a box around a target vector (Proposition~\ref{prop:satlattice}).
We represent the error, that is, the difference between the target and such a lattice vector, by the residues of its entries modulo small integers, each residue by a group of $0/1$ variables exactly one of which equals one.
Additional $0/1$ variables carry out, in binary, the arithmetic that recovers these entries from their residues and checks that the corresponding vector lies in the lattice.
The Chv\'atal closure of the SAT polytope lies in a face defined by equations that set a sum of terms $x_i$ and $1-x_i$ equal to one.
Each step of this arithmetic is modeled by the standard extended formulation of the convex hull of its finitely many $0/1$ solutions.
So the arithmetic holds as a linear identity on the whole face, not only at its integral vectors, and vectors of the dual lattice give integral directions along which this face has width less than one.
As in the proof of Lemma~\ref{lem:slab}, CG cuts along these directions fix all residues.
We use residues because the errors can be exponentially large, and residues modulo small integers represent them compactly, in a form that one round of CG cuts can fix.
As with the parallelepipeds, the vectors of the dual lattice that give these cuts exist but are hard to find (Remark~\ref{rem:findw}).

The paper is organized as follows.
Section~\ref{sec:prelim} contains preliminaries.
Section~\ref{sec:slab} proves our sufficient condition for Chv\'atal rank at most one (Lemma~\ref{lem:slab}) and applies it to parallelepipeds defined by lattices (Proposition~\ref{cor:promise}).
Section~\ref{sec:parhard} proves Theorems~A and~B and their consequences.
Section~\ref{sec:sat} constructs SAT polytopes defined by lattices, which play the role of the parallelepipeds (Proposition~\ref{prop:satlattice}), and Section~\ref{sec:sathard} proves Theorems~C and~D and their consequences.

\section{Preliminaries}\label{sec:prelim}

\paragraph{Notation.}
All vectors are column vectors.
We write $e_i$ for the $i$th unit vector, and $\0$ and $\1$ for the all-zeros and all-ones vectors.
All norms without subscript are Euclidean, and $\lfloor\cdot\rceil$ denotes rounding to a nearest integer.
The notation $\tilde O$ hides polylogarithmic factors.
A matrix $U\in\Z^{n\times n}$ is \emph{unimodular} if $\det U=\pm1$.

\paragraph{Polyhedra.}
CG cuts, the Chv\'atal closure $P'$, the integer hull $P_I$, and the Chv\'atal rank are defined in Section~\ref{sec:intro}.
For a polyhedron $P\subseteq\R^n$ and $c\in\R^n$, let $\delta_P(c)=\sup\{c\T x: x\in P\}$, and let $\width_P(c)=\delta_P(c)+\delta_P(-c)$ be the \emph{width} of $P$ along $c$.
A \emph{rational subpolyhedron} of $P$ is a rational polyhedron contained in $P$.

\paragraph{Lattices.}
A \emph{lattice} is a set $L=B\Z^n$, where $B\in\R^{n\times n}$ is nonsingular, and $B$ is a \emph{basis} of $L$.
For $i=1,\dots,n$, the $i$th \emph{successive minimum} $\lambda_i(L)$ is the smallest $\lambda$ such that $L$ contains $i$ linearly independent vectors of norm at most $\lambda$.
For $t\in\R^n$, let $\operatorname{dist}(t,L)=\min\{\|v-t\|: v\in L\}$.
The \emph{dual lattice} is $L^*=\{w\in\R^n: w\T y\in\Z \text{ for all } y\in L\}$.
We have $L^*=B^{-\mathsf T}\Z^n$ and $L^{**}=L$.

\paragraph{Promise problems.}
An algorithm \emph{solves} a promise problem if it runs within its time bound on every input and answers correctly on every input that satisfies the promise \cite[Section~1.2]{Goldreich06}.
Its output on other inputs is arbitrary.
A randomized or quantum algorithm solves a promise problem if it runs within its time bound on every input and answers correctly with probability at least $2/3$ on every input that satisfies the promise.
Similarly, an oracle for a promise problem answers every query, and answers correctly the queries that satisfy the promise.
A \emph{reduction} from one promise problem to another is a deterministic or randomized algorithm that solves the first using an oracle for the second, whatever the oracle answers on queries that violate the promise.
Each oracle call counts as one step, and the reduction is polynomial-time if this algorithm is.
It \emph{preserves the dimension} if, on inputs of dimension $n$, all its queries have dimension $n$.

\paragraph{Lattice problems.}
In the following problems, the input is a rational basis $B$ of a lattice $L\subseteq\R^n$, together with a rational vector $t$ for BDD and a rational number $\eta$ for GapSVP.
The parameters $\alpha\in(0,1)$ and $\gamma\ge1$ are functions of $n$.
\begin{itemize}
\item BDD$_\alpha$: under the promise $\operatorname{dist}(t,L)\le\alpha\lambda_1(L)$, find $v\in L$ with $\|v-t\|=\operatorname{dist}(t,L)$.
\item uSVP$_\gamma$: under the promise $\lambda_2(L)>\gamma\lambda_1(L)$, find $v\in L$ with $\|v\|=\lambda_1(L)$.
\item GapSVP$_\gamma$: under the promise that $\lambda_1(L)\le\eta$ or $\lambda_1(L)>\gamma\eta$, decide which of the two holds.
\item SIVP$_\gamma$: find $n$ linearly independent vectors of $L$ of norm at most $\gamma\lambda_n(L)$.
\end{itemize}

\paragraph{Learning with errors.}
Let $q\ge2$, and let $\Z_q=\Z/q\Z$, whose elements we represent by integers in $\{0,\dots,q-1\}$.
In the following problem, the input is a pair $(A,t)$ with $A\in\Z_q^{n\times m}$ and $t\in\Z_q^m$.
The modulus $q$, the number of samples $m$, and a probability distribution $\chi$ on $\Z$ are functions of $n$.
An \emph{LWE instance} and a \emph{uniform instance} are both pairs $(A,t)$ with $A$ uniformly random.
In an LWE instance, $t\equiv A\T s+e\pmod q$, where $s\in\Z_q^n$ is uniformly random and $e\in\Z^m$ has independent entries distributed according to $\chi$.
In a uniform instance, $t$ is uniformly random and independent of $A$.
\begin{itemize}
\item decision-$\LWE_{n,q,\chi}$ with $m$ samples: given an LWE instance or a uniform instance, decide which of the two it is, by outputting $1$ or $0$, respectively.
\end{itemize}
An algorithm \emph{solves} this problem if the probabilities that it outputs $1$ on an LWE instance and on a uniform instance differ by at least $n^{-c}$ for some constant $c>0$ and all sufficiently large $n$.
In our main results, $\chi$ is the error distribution $\bar\Psi_\alpha$ of Regev \cite[Section~2]{Regev09}, lifted from $\Z_q$ to $\Z$: for $\alpha\in(0,1)$, it is the distribution of $\lfloor X\rceil$, where $X$ is normal with mean zero and standard deviation $\alpha q/\sqrt{2\pi}$.
It can be sampled in polynomial time.

\paragraph{Chv\'atal rank exactly one.}
The questions of Cornu\'ejols, Lee, and Li are stated for polyhedra of Chv\'atal rank exactly one, while Questions~\ref{q:cube} and~\ref{q:cll} also allow rank zero.
The following standard lemma shows that this makes no difference.

\begin{lemma}\label{lem:exactlyone}
There is a polynomial-time algorithm that, given a system $Cx\le d$ with $C$ and $d$ rational, defining a polyhedron $P$, either decides whether $P$ contains an integral vector, or concludes that $P$ is not integral.
In particular, if $P'=P_I$ and the algorithm concludes that $P$ is not integral, then $P$ has Chv\'atal rank exactly one.
\end{lemma}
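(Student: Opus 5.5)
The algorithm runs linear programming over $P$ and uses the outcome to either settle integer feasibility or certify non-integrality.

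First, decide in polynomial time whether $P=\emptyset$ by linear programming \cite[Theorem~13.4]{Schrijver86}. If $P=\emptyset$, answer that $P$ contains no integral vector.

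Otherwise, compute a vector in a minimal face of $P$. Concretely, compute the lineality space $\{x: Cx=\0\}$ and then a vertex of $P\cap\{x: x\in(\ker C)^\perp\}$ (or, equivalently, a basic feasible solution). The standard polynomial-time LP machinery finds a point $x^*$ together with a subsystem $C_1x=d_1$ of tight rows such that the minimal face is $F=\{x: C_1x=d_1\}$. This is an affine subspace contained in $P$ with $\operatorname{rank}C_1=\operatorname{rank}C$.

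Next, decide whether $F$ contains an integral vector by computing the Hermite (or Smith) normal form of $C_1$ in polynomial time \cite{Schrijver86}. If $F\cap\Z^n\neq\emptyset$, then $P$ contains an integral vector, and we answer yes. If $F\cap\Z^n=\emptyset$, then $P$ is not integral, because an integral polyhedron has every minimal face containing integral points \cite[Section~16.3]{Schrijver86}. In that case we conclude that $P$ is not integral.

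For the ``in particular'' part: suppose $P'=P_I$ and $P$ is not integral. Then $P\neq P_I$, so the Chv\'atal rank is not zero. Since $P_I=P'$, the rank is at most one, hence exactly one.

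The only delicate step is making sure that finding the minimal face $F$ as an explicit affine subspace $\{C_1x=d_1\}$ is polynomial, and that the characterization "integral iff every minimal face contains an integral point" applies to pointed and non-pointed rational polyhedra alike. I would cite \cite[Corollary~16.3a]{Schrijver86}, which states that $P$ is integral iff each minimal face contains integral vectors. Since all minimal faces are translates of the lineality space, checking one of them suffices to refute integrality.
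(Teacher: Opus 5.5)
Your proposal is correct and follows the paper's proof step for step. You test $P=\emptyset$ by linear programming, compute an explicit minimal face $\{x: C_1x=d_1\}$, decide whether that face has an integral vector via the Hermite normal form, and use the fact that a rational polyhedron is integral if and only if each of its minimal faces contains an integral vector. Your construction of the minimal face from a vertex of $P\cap(\ker C)^\perp$ just spells out the step the paper cites from Schrijver. The remark about translates of the lineality space is not needed, since a single minimal face without integral vectors already refutes integrality.
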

\begin{proof}
A rational polyhedron is integral if and only if each of its minimal faces contains an integral vector \cite[Section~16.3]{Schrijver86}.
We decide whether $P=\emptyset$ by linear programming \cite[Theorem~13.4]{Schrijver86}.
If $P=\emptyset$, then $P$ contains no integral vector.
Otherwise, we compute a system of linear equations $C'x=d'$ that defines a minimal face of $P$ \cite[proof of Theorem~16.2]{Schrijver86}, and we decide whether this face contains an integral vector \cite[Corollary~5.3b]{Schrijver86}.
If it does, then so does $P$.
Otherwise this minimal face contains no integral vector, so $P$ is not integral.
If moreover $P'=P_I$, then $P\ne P_I=P'$, so $P$ has Chv\'atal rank exactly one.
\end{proof}

Hence the versions of Questions~\ref{q:cube} and~\ref{q:cll} for polyhedra of Chv\'atal rank exactly one are equivalent to ours.
Indeed, an algorithm for our version applies in particular to polyhedra of Chv\'atal rank exactly one.
Conversely, given an algorithm for the version with Chv\'atal rank exactly one, we first process the input with Lemma~\ref{lem:exactlyone}, and we call this algorithm only if Lemma~\ref{lem:exactlyone} concludes that the input is not integral.

\section{Parallelepipeds of Chv\'atal rank at most one}\label{sec:slab}

In this section we prove Lemma~\ref{lem:slab}, the sufficient condition for Chv\'atal rank at most one on which the proofs of Theorems~A and~B rest, and we apply it to parallelepipeds defined by lattices (Proposition~\ref{cor:promise}).

\subsection{A sufficient condition}\label{sec:condition}

\begin{lemma}\label{lem:slab}
Let $U\in\Z^{n\times n}$ be unimodular with rows $u_1\T,\dots,u_n\T$, and let $P\subseteq\R^n$ be a rational polyhedron with $\width_P(u_i)<1$ for $i=1,\dots,n$.
Then $P'=P_I$, $P_I$ is the intersection of $P$ with at most $2n$ CG cuts, and $P$ contains at most one integral vector.
Moreover, every rational subpolyhedron of $P$ has Chv\'atal rank at most one.
\end{lemma}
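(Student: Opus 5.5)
The plan is to use the $2n$ CG cuts $u_i\T x\le\fl{\delta_P(u_i)}$ and $-u_i\T x\le\fl{\delta_P(-u_i)}$ for $i=1,\dots,n$, and to show that $P$ intersected with them is exactly $P_I$. First consider the case $P=\emptyset$, where $P'=P_I=\emptyset$ trivially and no cuts are needed. Otherwise, each $\delta_P(\pm u_i)$ is finite, because $\width_P(u_i)<1$ and $P\neq\emptyset$ force both suprema to be finite. Since $P$ is rational and $u_i$ is integral, these values are attained and rational, so the cuts are well-defined CG cuts.

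The key observation is that the interval $[-\delta_P(-u_i),\delta_P(u_i)]$, which contains $u_i\T x$ for all $x\in P$, has length less than one. Hence it contains at most one integer $k_i$. The two cuts along $u_i$ then give $\lceil -\delta_P(-u_i)\rceil\le u_i\T x\le\fl{\delta_P(u_i)}$. If the interval contains no integer, the cuts are contradictory and $P'=\emptyset$. Otherwise they force $u_i\T x=k_i$. So after these cuts, either some pair is infeasible and $P'=\emptyset$, or every $x$ satisfies $Ux=k$ with $k\in\Z^n$. Because $U$ is unimodular, $U^{-1}$ is integral, so $x=U^{-1}k$ is a single integral vector $z$. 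The intersection of $P$ with the cuts is therefore either empty or contained in $\{z\}$, and hence contained in $P\cap\Z^n\subseteq P_I$. Combined with $P_I\subseteq P'\subseteq$ this intersection, we get equality throughout. The same argument shows that every integral vector of $P$ must equal $U^{-1}k$, so $P$ has at most one integral vector. The stated number of cuts is $2n$.

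For the last claim, let $Q\subseteq P$ be a rational subpolyhedron. Then $\delta_Q(c)\le\delta_P(c)$ for every $c$ (with $\delta_\emptyset=-\infty$), so $\width_Q(u_i)\le\width_P(u_i)<1$, and the lemma applied to $Q$ gives $Q'=Q_I$. The only delicate step is the bookkeeping for empty or unbounded cases, namely making sure the suprema are attained so that the cuts are valid. This follows from rationality of $P$ and finiteness of the linear program, so I expect no real obstacle.
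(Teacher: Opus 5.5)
Your proposal is correct and follows the paper's own proof. The $2n$ CG cuts along $\pm u_i$ either contradict each other or fix each $u_i\T x$ to a single integer, unimodularity makes the resulting point integral, and the chain $P_I\subseteq P'\subseteq P_0\subseteq P_I$ together with the monotonicity of $\width$ on subpolyhedra gives the rest. One small point: you do not need the suprema to be attained, because $u_i\T x\le\delta_P(u_i)$ is valid by the definition of the supremum, and that is all a CG cut requires.
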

\begin{proof}
We may assume $P\neq\emptyset$.
Then $\delta_P(\pm u_i)$ is finite, since $\width_P(u_i)<1$.
Let $P_0$ be the intersection of $P$ with the $2n$ CG cuts $u_i\T x\le\fl{\delta_P(u_i)}$ and $u_i\T x\ge-\fl{\delta_P(-u_i)}$.
The difference of these bounds is $\fl{\delta_P(u_i)}+\fl{\delta_P(-u_i)}$, an integer at most $\delta_P(u_i)+\delta_P(-u_i)=\width_P(u_i)<1$, and hence at most zero.
If $P_0\ne\emptyset$, it follows that, for every $i$, there is an integer $\gamma_i$ with $u_i\T x=\gamma_i$ for all $x\in P_0$, so $P_0=\{U^{-1}\gamma\}$.
Then $U^{-1}\gamma$ is an integral vector of $P$, since $U$ is unimodular.
Thus $P_0\subseteq P_I$, which also holds if $P_0=\emptyset$.
Since also $P_I\subseteq P'\subseteq P_0$, these three sets are equal.
In particular, $P$ contains at most one integral vector, since $P\cap\Z^n\subseteq P_I=P_0$.
Let $R$ be a rational subpolyhedron of $P$.
Then $\width_R\le\width_P$, so the width assumption holds for $R$, and the first statement applies to $R$.
\end{proof}

\begin{remark}\label{rem:croppedcube}
The strict inequality in Lemma~\ref{lem:slab} cannot be weakened to $\width_P(u_i)\le1$, and the last statement of Lemma~\ref{lem:slab} is not implied by $P'=P_I$.
Both facts are shown by the \emph{cropped cube}
\[
\Big\{x\in[0,1]^n:\sum_{j\in J}x_j+\sum_{j\notin J}(1-x_j)\ge\tfrac12\text{ for all }J\subseteq\{1,\dots,n\}\Big\},
\]
which has Chv\'atal rank at least $n$ \cite[Lemma~7.2]{ChvatalCookHartmann89}, and so at least two for $n\ge2$.
For such $n$, it has width one along every $e_i$, which shows the first fact, and it is a rational subpolyhedron of the integral cube $[0,1]^n$, which shows the second.
\end{remark}

\subsection{Parallelepipeds defined by lattices}\label{sec:parallelepipeds}

We now apply Lemma~\ref{lem:slab} to parallelepipeds defined by lattices: we compute their widths along the directions given by dual bases, and we show that these parallelepipeds satisfy the promise when they are thin (Proposition~\ref{cor:promise}).
For a basis $B\in\Q^{n\times n}$ of a lattice $L$, $t\in\Q^n$, and $r>0$, let
\[
Q(B,t,r)=\{x\in\R^n: \|Bx-t\|_\infty\le r\}=\{x\in\R^n: t-r\1\le Bx\le t+r\1\},
\]
which is a parallelepiped (Figure~\ref{fig:parallelepiped}).
The map $x\mapsto Bx$ is a bijection between $Q(B,t,r)\cap\Z^n$ and the set of vectors of $L$ within $\ell_\infty$-distance $r$ of $t$.
To apply Lemma~\ref{lem:slab} to $Q(B,t,r)$, we need a unimodular matrix along whose rows $Q(B,t,r)$ has width less than one.
The next lemma shows that every basis $w_1,\dots,w_n$ of the dual lattice $L^*$ gives a candidate, with rows $w_1\T B,\dots,w_n\T B$, and computes the widths along these rows.

\begin{figure}[t]
\centering
\begin{tikzpicture}
\begin{scope}[x=0.3cm,y=0.3cm]
\begin{scope}\clip (-4.5,-4.5) rectangle (12.5,4.5);
\draw[dashed] (-1.125,-4.5) -- (1.125,4.5);
\draw[dashed] (2.875,-4.5) -- (5.125,4.5);
\draw[dashed] (6.875,-4.5) -- (9.125,4.5);
\draw[densely dotted] (-4.5,-4) -- (12.5,-4);
\draw[densely dotted] (-4.5,0) -- (12.5,0);
\draw[densely dotted] (-4.5,4) -- (12.5,4);
\end{scope}
\fill (-4,0) circle (1.5pt);
\fill (-3,4) circle (1.5pt);
\fill (-1,-4) circle (1.5pt);
\fill (0,0) circle (1.5pt);
\fill (1,4) circle (1.5pt);
\fill (3,-4) circle (1.5pt);
\fill (4,0) circle (1.5pt);
\fill (5,4) circle (1.5pt);
\fill (7,-4) circle (1.5pt);
\fill (8,0) circle (1.5pt);
\fill (9,4) circle (1.5pt);
\fill (11,-4) circle (1.5pt);
\fill (12,0) circle (1.5pt);
\draw[thick,fill=gray!20,fill opacity=0.8] (3,-1.9) rectangle (6,1.1);
\fill (4,0) circle (2pt) node[below left=-1pt] {\scriptsize$v$};
\draw (4.5,-0.4) node {\scriptsize$\times$} node[above right=-2pt] {\scriptsize$t$};
\end{scope}
\begin{scope}[shift={(8.95,-0.7)},x=0.7cm,y=0.7cm]
\begin{scope}\clip (-4.5,-0.5) rectangle (2.5,2.5);
\draw[dashed] (-7.5,3.75) -- (5.5,-2.75);
\draw[dashed] (-7.5,4.25) -- (5.5,-2.25);
\draw[dashed] (-7.5,4.75) -- (5.5,-1.75);
\draw[densely dotted] (-7.5,6.5) -- (5.5,-6.5);
\draw[densely dotted] (-7.5,7.5) -- (5.5,-5.5);
\draw[densely dotted] (-7.5,8.5) -- (5.5,-4.5);
\end{scope}
\draw[thick,fill=gray!20,fill opacity=0.8] (-1.819,1.344) -- (-2.569,2.094) -- (-0.881,1.156) -- (-0.131,0.406) -- cycle;
\fill (-4,0) circle (1.5pt);
\fill (-4,1) circle (1.5pt);
\fill (-4,2) circle (1.5pt);
\fill (-3,0) circle (1.5pt);
\fill (-3,1) circle (1.5pt);
\fill (-3,2) circle (1.5pt);
\fill (-2,0) circle (1.5pt);
\fill (-2,1) circle (1.5pt);
\fill (-2,2) circle (1.5pt);
\fill (-1,0) circle (1.5pt);
\fill (-1,1) circle (1.5pt);
\fill (-1,2) circle (1.5pt);
\fill (0,0) circle (1.5pt);
\fill (0,1) circle (1.5pt);
\fill (0,2) circle (1.5pt);
\fill (1,0) circle (1.5pt);
\fill (1,1) circle (1.5pt);
\fill (1,2) circle (1.5pt);
\fill (2,0) circle (1.5pt);
\fill (2,1) circle (1.5pt);
\fill (2,2) circle (1.5pt);
\fill (-1,1) circle (2.2pt);
\end{scope}
\end{tikzpicture}
\caption{The parallelepiped $Q(B,t,r)$ for $B=\left(\begin{smallmatrix}5&9\\4&4\end{smallmatrix}\right)$, $t=(4.5,-0.4)$, and $r=1.5$.
The vectors $w_1=(\frac14,-\frac1{16})$ and $w_2=(0,\frac14)$ form a basis of $L^*$, where $L=B\Z^2$, and $u_1=B\T w_1=(1,2)$ and $u_2=B\T w_2=(1,1)$.
Left: the lattice $L$, the square $t+r[-1,1]^2$, which contains only the vector $v=(4,0)$ of $L$, and the lines $w_1\T y=0,1,2$ (dashed) and $w_2\T y=-1,0,1$ (dotted).
Right: $Q(B,t,r)$, whose only integral vector is $B^{-1}v=(-1,1)$, and the lines $u_1\T x=0,1,2$ (dashed) and $u_2\T x=-1,0,1$ (dotted), which the map $x\mapsto Bx$ sends to the lines on the left.
By Lemma~\ref{lem:directions}, the widths of $Q(B,t,r)$ along $u_1$ and $u_2$ are $\frac{15}{16}$ and $\frac34$, so Lemma~\ref{lem:slab} applies, whereas its widths along $e_1$ and $e_2$ are $\frac{39}{16}$ and $\frac{27}{16}$.}
\label{fig:parallelepiped}
\end{figure}

\begin{lemma}\label{lem:directions}
Let $B\in\Q^{n\times n}$ be a basis of a lattice $L$, let $t\in\Q^n$, let $r>0$, and let $w_1,\dots,w_n$ be a basis of $L^*$.
Then $w_1\T B,\dots,w_n\T B$ are the rows of a unimodular matrix, every unimodular matrix arises in this way from a basis of $L^*$, and $\width_{Q(B,t,r)}(B\T w)=2r\|w\|_1$ for every $w\in\R^n$.
\end{lemma}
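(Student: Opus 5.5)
Three parts: the unimodularity claim, the converse, and the width formula. All three follow by writing everything in matrix form with the facts $L^*=B^{-\mathsf T}\Z^n$ and $L^{**}=L$ from the preliminaries.

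For the first two claims, let $W$ be the matrix with columns $w_1,\dots,w_n$. Since both $W$ and $B^{-\mathsf T}$ are bases of $L^*$, there is a unimodular $V$ with $W=B^{-\mathsf T}V$; this uses the standard fact that two bases of the same lattice differ by a unimodular matrix. The matrix with rows $w_i\T B$ is
\[
U=W\T B=V\T B^{-1}B=V\T,
\]
which is unimodular. Conversely, given a unimodular $U$, set $W=B^{-\mathsf T}U\T$. Its columns form a basis of $B^{-\mathsf T}\Z^n=L^*$ because $U\T$ is unimodular, and $W\T B=U$, so $U$ arises from this basis.

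For the width formula, I substitute $y=Bx-t$. Since $B$ is nonsingular, $x\mapsto Bx-t$ is an affine bijection from $Q(B,t,r)$ onto the cube $r[-1,1]^n$. For any $w\in\R^n$,
\[
(B\T w)\T x=w\T Bx=w\T y+w\T t.
\]
Hence $\delta_Q(B\T w)=w\T t+\sup\{w\T y:\|y\|_\infty\le r\}=w\T t+r\|w\|_1$, because the supremum of a linear function over the cube is attained at the vertex $y=r\cdot\operatorname{sign}(w)$ (duality of $\ell_1$ and $\ell_\infty$). In the same way, $\delta_Q(-B\T w)=-w\T t+r\|w\|_1$. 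Adding the two gives $\width_Q(B\T w)=2r\|w\|_1$.

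None of these steps is a real obstacle. The only point that needs care is the fact that bases of the same lattice differ by a unimodular matrix, which I take as standard: each basis is an integer combination of the other, so the transition matrices are integral and mutually inverse, and hence both have determinant $\pm1$.
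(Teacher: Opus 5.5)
Your proposal is correct and follows essentially the paper's approach. The paper gets the first two claims from the fact that $w\mapsto B\T w$ is a linear bijection from $L^*$ onto $\Z^n$, so it maps bases of $L^*$ bijectively to bases of $\Z^n$; this is your identity $W\T B=V\T$ without matrices, and your width computation via $\{Bx: x\in Q(B,t,r)\}=t+r[-1,1]^n$ and $\delta_{Q(B,t,r)}(B\T w)=w\T t+r\|w\|_1$ is the paper's.
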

\begin{proof}
The map $w\mapsto B\T w$ is a linear bijection from $L^*$ to $\Z^n$, hence it maps the bases of $L^*$ bijectively to the bases of $\Z^n$.
The transposes of the vectors of a basis of $\Z^n$ are the rows of a unimodular matrix, and every unimodular matrix arises in this way.
Moreover, $(B\T w)\T x=w\T Bx$ and $\{Bx: x\in Q(B,t,r)\}=t+r[-1,1]^n$.
Hence $\delta_{Q(B,t,r)}(B\T w)=w\T t+r\|w\|_1$ for every $w\in\R^n$, and adding this identity for $w$ and $-w$ gives the claim.
\end{proof}

To make these widths less than one, we need a basis of $L^*$ with $\ell_1$-norms less than $1/(2r)$.
The next lemma gives a basis with $\ell_1$-norms at most $n^2/\lambda_1(L)$, which suffices when $r<\lambda_1(L)/(2n^2)$.

\begin{lemma}\label{lem:dualbasis}
For every lattice $L\subseteq\R^n$, the dual lattice $L^*$ has a basis $w_1,\dots,w_n$ with $\|w_i\|_1\le n^2/\lambda_1(L)$ for $i=1,\dots,n$.
\end{lemma}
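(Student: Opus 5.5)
We would combine Banaszczyk's transference theorem with a standard basis-extraction argument. Banaszczyk's theorem \cite[Theorem~2.1]{Banaszczyk93} gives $\lambda_1(L)\,\lambda_n(L^*)\le n$ for every lattice $L\subseteq\R^n$. Hence $L^*$ contains $n$ linearly independent vectors $v_1,\dots,v_n$ with $\|v_i\|\le\lambda_n(L^*)\le n/\lambda_1(L)$.

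These vectors are linearly independent but need not form a basis of $L^*$. To pass to a basis without losing much in norm, we use the classical fact, e.g.\ \cite[Lemma~7.1]{MicciancioGoldwasser} or Cassels, that from $n$ linearly independent vectors of a lattice one can obtain a basis $w_1,\dots,w_n$ with
\[
\|w_i\|\le\max\Big(\|v_i\|,\tfrac12\sum_{j\le i}\|v_j\|\Big)\le\tfrac{i+1}{2}\max_j\|v_j\|.
\]
The construction goes as follows. We take the basis adapted to the flag $\operatorname{span}(v_1,\dots,v_i)$, that is, $w_i\in L^*\cap\operatorname{span}(v_1,\dots,v_i)$ chosen so that $w_1,\dots,w_i$ is a basis of $L^*\cap\operatorname{span}(v_1,\dots,v_i)$. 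We then reduce $w_i$ modulo the sublattice generated by $v_1,\dots,v_{i-1}$ and by $v_i$: we write $w_i=\sum_{j\le i}c_jv_j$ with $c_i=1/k$ for some integer $k\ge1$, and subtract integer multiples of $v_j$ to get $|c_j|\le1/2$ for $j<i$. If $k=1$ we may simply take $w_i=v_i$. Otherwise $|c_i|\le1/2$ as well, so $\|w_i\|\le\frac12\sum_{j\le i}\|v_j\|$. This gives $\|w_i\|\le\frac{n+1}{2}\cdot\frac{n}{\lambda_1(L)}$ for every $i$, which is at most $n\cdot n/\lambda_1(L)$ but leaves a factor to spare only in the Euclidean norm.

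Finally we convert to the $\ell_1$-norm, using $\|w\|_1\le\sqrt n\,\|w\|$. This gives
\[
\|w_i\|_1\le\sqrt n\cdot\tfrac{n+1}{2}\cdot\tfrac{n}{\lambda_1(L)}.
\]
This is too big by a factor of about $\sqrt n/2$ compared with the target $n^2/\lambda_1(L)$. The main obstacle is therefore to save this $\sqrt n$.

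The first thing we would try is the sharper bound $\|w_i\|\le\max(\|v_i\|,\tfrac12\sqrt{\sum_{j\le i}\|v_j\|^2})$. To get it, we take the $v_j$ in the Gram--Schmidt sense: after the reduction, the coefficients with respect to the Gram--Schmidt vectors $\tilde v_j$ have absolute value at most $1/2$ (Babai-style size reduction), and $\|\tilde v_j\|\le\|v_j\|$. Pythagoras then gives $\|w_i\|^2\le\frac14\sum_{j\le i}\|v_j\|^2$, so $\|w_i\|\le\max(1,\sqrt n/2)\cdot n/\lambda_1(L)$. In $\ell_1$ this yields
\[
\|w_i\|_1\le\sqrt n\cdot\sqrt n\cdot\frac{n}{\lambda_1(L)}=\frac{n^2}{\lambda_1(L)},
\]
as claimed. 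This size reduction has to be done relative to the flag, i.e.\ reducing the $\tilde v_j$-components one at a time from $j=i-1$ down to $1$ by subtracting integer multiples of $w_j$ (whose $j$th Gram--Schmidt component is $\tilde v_j/k_j$). The subtle point, which we would check carefully, is that the Gram--Schmidt component of each basis vector has length at most $\|\tilde v_j\|\le\|v_j\|$, so each coefficient can indeed be brought to at most $\|\tilde v_j\|/2$ in absolute value.
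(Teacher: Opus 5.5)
Your final argument is correct and is essentially the paper's proof. It combines three steps: Banaszczyk's bound $\lambda_n(L^*)\le n/\lambda_1(L)$, a basis of $L^*$ with $\|w_i\|\le\max\{1,\sqrt i/2\}\,\lambda_n(L^*)\le\sqrt n\,\lambda_n(L^*)$, and then $\|w_i\|_1\le\sqrt n\,\|w_i\|$. The only differences are that you reprove the basis-extraction bound, via a flag-adapted basis and Gram--Schmidt size reduction, where the paper cites Corollary~7.2 of Micciancio--Goldwasser, and that your first estimate with $\tfrac12\sum_{j\le i}\|v_j\|$ is an unnecessary detour.
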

\begin{proof}
We have $\lambda_n(L^*)\le n/\lambda_1(L)$, by a transference theorem of Banaszczyk \cite[Theorem~2.1]{Banaszczyk93}.
Moreover, by \cite[Corollary~7.2]{MicciancioGoldwasser02}, $L^*$ has a basis $w_1,\dots,w_n$ such that, for $i=1,\dots,n$,
\[
\|w_i\|\le\max\{1,\sqrt i/2\}\,\lambda_i(L^*).
\]
Hence $\|w_i\|\le\max\{1,\sqrt i/2\}\,\lambda_i(L^*)\le\sqrt n\,\lambda_n(L^*)\le n^{3/2}/\lambda_1(L)$, and $\|w_i\|_1\le\sqrt n\,\|w_i\|\le n^2/\lambda_1(L)$.
\end{proof}

We call $Q(B,t,r)$ \emph{thin} if $r\in\Q$ and $0<r<\lambda_1(L)/(2n^2)$, where $L\subseteq\R^n$ is the lattice with basis $B$.
Combining Lemmas~\ref{lem:slab}, \ref{lem:directions}, and~\ref{lem:dualbasis}, we obtain the following.

\begin{proposition}\label{cor:promise}
If $Q(B,t,r)$ is thin, then every rational subpolyhedron of $Q(B,t,r)$ has Chv\'atal rank at most one, and $Q(B,t,r)$ contains at most one integral vector.
\end{proposition}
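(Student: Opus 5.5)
The plan is to combine the three preceding lemmas directly. Let $L=B\Z^n$. We may assume $Q(B,t,r)$ is thin, so $r\in\Q$ and $0<r<\lambda_1(L)/(2n^2)$. Since $B$, $t$, and $r$ are rational, $Q(B,t,r)$ is a rational polyhedron, defined by the rational system $t-r\1\le Bx\le t+r\1$. This rationality is needed to apply Lemma~\ref{lem:slab} to it and to its rational subpolyhedra.

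First, Lemma~\ref{lem:dualbasis} gives a basis $w_1,\dots,w_n$ of $L^*$ with $\|w_i\|_1\le n^2/\lambda_1(L)$ for each $i$. By Lemma~\ref{lem:directions}, the vectors $u_i=B\T w_i$ are the rows of a unimodular matrix $U\in\Z^{n\times n}$, and
\[
\width_{Q(B,t,r)}(u_i)=2r\|w_i\|_1\le 2r\,n^2/\lambda_1(L)<1,
\]
where the strict inequality uses exactly the thinness bound $r<\lambda_1(L)/(2n^2)$.

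Second, we apply Lemma~\ref{lem:slab} to $P=Q(B,t,r)$ with this $U$. It yields that $P$ contains at most one integral vector. It also yields the stronger conclusion that every rational subpolyhedron of $P$ has Chv\'atal rank at most one.

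No step is a real obstacle, since the heavy lifting (Banaszczyk's transference bound and the existence of a short dual basis) is already packaged in Lemma~\ref{lem:dualbasis}. The only points to check are minor. The inequality must be strict, and it is, because thinness gives a strict bound on $r$. The rows $u_i$ must be integral, which Lemma~\ref{lem:directions} guarantees. Note also that the basis $w_i$ need not be rational or computable; Lemma~\ref{lem:slab} only requires that $U$ exist, not that it be found.
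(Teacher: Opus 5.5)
Your proof is correct and matches the paper's argument exactly: you take the dual basis from Lemma~\ref{lem:dualbasis}, use Lemma~\ref{lem:directions} for unimodularity of the rows $w_i\T B$ and the width bound $2r\|w_i\|_1\le 2rn^2/\lambda_1(L)<1$, and conclude by Lemma~\ref{lem:slab} using that $Q(B,t,r)$ is rational. One small aside: the $w_i$ are in fact rational, since $L^*=B^{-\mathsf T}\Z^n$ with $B$ rational, though, as you note, this plays no role in the argument.
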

\begin{proof}
Let $w_1,\dots,w_n$ be the basis of $L^*$ given by Lemma~\ref{lem:dualbasis}.
By Lemma~\ref{lem:directions}, the matrix with rows $w_1\T B,\dots,w_n\T B$ is unimodular, and, for each $i$, the width of $Q(B,t,r)$ along $B\T w_i$ is $2r\|w_i\|_1\le2rn^2/\lambda_1(L)<1$.
Since $B$, $t$, and $r$ are rational, $Q(B,t,r)$ is a rational polyhedron, and the statement follows from Lemma~\ref{lem:slab}.
\end{proof}

\section{Hardness for parallelepipeds}\label{sec:parhard}

In this section we reduce worst-case lattice problems and learning with errors to integer feasibility under the promise $P'=P_I$, using the parallelepipeds of Proposition~\ref{cor:promise}.
This proves Theorems~A and~B, through Theorems~\ref{thm:bdd} and~\ref{cor:lwe}, and gives their consequences for lattice problems (Corollaries~\ref{cor:usvp} and~\ref{cor:regev}).

\subsection{Hardness from worst-case lattice problems}\label{sec:hardness}

To solve BDD, we try radii $r$ that are powers of two, and we find the closest vector coordinate by coordinate (Theorem~\ref{thm:bdd}).
Combined with reductions of Lyubashevsky and Micciancio, this gives consequences for uSVP and GapSVP (Corollary~\ref{cor:usvp}).

\begin{theorem}\label{thm:bdd}
BDD$_{1/(4n^2)}$ can be solved in deterministic polynomial time with an oracle for integer feasibility under the promise that the query is a thin parallelepiped in $\R^n$.
Every query is a rational parallelepiped in $\R^n$.
\end{theorem}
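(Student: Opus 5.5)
Given an instance $(B,t)$ of BDD$_{1/(4n^2)}$, let $v=Bz$ with $z\in\Z^n$ be the closest vector, with error norm $d=\|v-t\|\le\lambda_1(L)/(4n^2)$. We do not know $\lambda_1(L)$, so we guess the radius. Let $r_k=2^k$ for integers $k$ in a range polynomial in the encoding size of $(B,t)$. A range suffices because both $\lambda_1(L)$ and a nonzero $d$ are bounded above and below by $2^{\pm\poly}$ in that size: $L\subseteq\frac1D\Z^n$ for a common denominator $D$, and $d$ is at most the length of a basis vector. If $d=0$, then $t\in L$ and $z=B^{-1}t$ is computed directly.

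Otherwise there is a $k$ with $d\le r_k<2d$. Then $\|v-t\|_\infty\le d\le r_k$, so $z\in Q(B,t,r_k)$. Also $r_k<2d\le\lambda_1(L)/(2n^2)$, so $Q(B,t,r_k)$ is thin. We do not know this $k$, so we run the procedure below for every $k$ in the range, collect all candidate vectors $z$ produced, check that each is integral, and output the one minimizing $\|Bz-t\|$. Queries that violate the promise may yield garbage, but garbage only adds candidates, and the minimization discards it. Thus it suffices that the run with the good $k$ outputs $z$.

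Fix the good $r=r_k$. By Proposition~\ref{cor:promise}, $z$ is the unique integral vector of $Q(B,t,r)$. We recover the coordinates of $z$ one bit at a time using sublattice queries. Suppose we already know $z\equiv a\pmod{2^j}$ componentwise for some $a\in\Z^n$. For a coordinate $i$, we test the hypothesis $z_i\equiv a_i\pmod{2^{j+1}}$, with the other coordinates free mod $2^{j+1}$ beyond the known residues. To do this, write $z=a+2^jy$ with $y\in\Z^n$, and query whether $y_i$ is even. Substituting $y=D'y'$, where $D'$ multiplies coordinate $i$ by $2$, turns this into a query of integer feasibility for $Q(2^jBD',\,t-Ba,\,r)$. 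Its basis $2^jBD'$ generates a sublattice $L'$ of $L$, shifted appropriately, so $\lambda_1(L')\ge\lambda_1(L)$. Hence the query is again thin, since $r<\lambda_1(L)/(2n^2)\le\lambda_1(L')/(2n^2)$, and its answer is correct. Its integral vectors correspond to lattice vectors of $L'$-coset type within $\ell_\infty$-distance $r$ of $t$, and the only such vector is $v$, so the query reveals bit $j$ of $z_i$. This applies the trick coordinate by coordinate, so the lattice is $a$-shifted $2^{j}B$ with coordinate $i$ doubled. Equivalently, each coordinate is handled with the earlier coordinates' bits already fixed, so the basis is $2^{j+1}$ on finished coordinates and $2^j$ on the rest.

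Since $|z_i|\le\|B^{-1}\|\,(\|t\|+d)$ is at most $2^{\poly}$, polynomially many bits suffice. Signs are handled by working with residues in a symmetric range, or by shifting $t$ by a known lattice vector so that $z\ge\0$. After all bits are collected, $a=z$ exactly, and the candidate check confirms it. Every query is a rational parallelepiped of the form $Q(B'',t'',r)$ in $\R^n$ with $B''$ a rational nonsingular matrix, as the statement requires. The number of queries is (number of radii)$\times n\times$(number of bits), which is polynomial. The encoding sizes also stay polynomial, since the entries only gain factors $2^{\poly}$.

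The main obstacle is the bookkeeping that guarantees every query issued in the good run is thin, that is, that the sublattices and shifted targets keep the radius below $\lambda_1/(2n^2)$ of the queried lattice. This follows because the queried lattices are sublattices of $L$, and sublattices only increase $\lambda_1$. The second obstacle is making the answer correspond exactly to the parity bit through the uniqueness of $v$: within distance $r<\lambda_1/2$ of $t$, $v$ is the only vector of $L$, so each coset query reports exactly whether $v$ lies in that coset.
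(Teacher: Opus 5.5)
Your proof is correct and follows the paper's strategy. You use radii $2^k$ that cover $[d,2d)$, get thinness of the parallelepiped and of every sublattice query from Proposition~\ref{cor:promise}, and recover $z$ bit by bit by scaling a column of the basis by a power of two and shifting the target. The differences are minor: you run every radius and keep the candidate minimizing $\|Bz-t\|$, with $d=0$ handled separately, whereas the paper tries radii in increasing order and stops at the first one reported feasible, using that all its queries with $2^j\le 2^{j^*}$ are thin; and you fix the bit length and signs with an a priori bound on $|z_i|$ instead of computing the intervals $I_i$ by linear programming.
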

\begin{proof}
Let $B\in\Q^{n\times n}$ be a basis of a lattice $L$ and let $t\in\Q^n$, with total encoding size $\sigma$.
Let $\rho=\operatorname{dist}(t,L)$, and let $v^*$ be a vector of $L$ closest to $t$.
For an integer $j$, let $Q_j=Q(B,t,2^j)$.
The idea is to guess $\rho$ up to a factor of two by trying the radii $2^j$.
If the input satisfies the promise of BDD, some $j$ satisfies $\rho\le2^j<\lambda_1(L)/(2n^2)$.
For such $j$, $Q_j$ is thin, and $B^{-1}v^*$ is its only integral vector, and we find its entries bit by bit, with queries for thin parallelepipeds defined by sublattices of $L$.

We first construct integers $j_0\le j_1$ such that, under the promise, some $j^*$ between them satisfies $\rho\le2^{j^*}<\lambda_1(L)/(2n^2)$.
To do so, it suffices that $2^{j_0}\le\lambda_1(L)/(4n^2)$ and $2^{j_1}\ge\rho$.
Indeed, let $j^*$ be the smallest integer $j\ge j_0$ with $2^j\ge\rho$, so that $j^*\le j_1$.
If $j^*=j_0$, then $2^{j^*}\le\lambda_1(L)/(4n^2)$.
Otherwise $2^{j^*-1}<\rho$ by the choice of $j^*$, and so $2^{j^*}<2\rho$, which is at most $\lambda_1(L)/(2n^2)$ under the promise $\rho\le\lambda_1(L)/(4n^2)$.
We now construct such $j_0$ and $j_1$.
Since $\0\in L$, we have $\rho\le\|t\|\le2^\sigma$, so $j_1=\sigma$ satisfies $2^{j_1}\ge\rho$.
The least common denominator of the entries of $B$ is at most $2^\sigma$, and multiplying $L$ by it gives a sublattice of $\Z^n$, whose nonzero vectors have norm at least one.
Thus $\lambda_1(L)\ge2^{-\sigma}$.
Hence $j_0=-\sigma-\lceil\log_2(4n^2)\rceil$ satisfies $2^{j_0}\le\lambda_1(L)/(4n^2)$.

Next we describe the queries and bound the running time.
We consider $j=j_0,j_0+1,\dots,j_1$ and do the following for each $j$.
We ask the oracle whether $Q_j$ contains an integral vector, and if it reports that $Q_j$ contains none, we proceed to the next $j$.
If the oracle reports that $Q_j$ contains an integral vector, then, for $i=1,\dots,n$, we compute $I_i=\{x_i: x\in Q_j\}$ by linear programming, we let $c_i=\lceil\min I_i\rceil$, and we let $K_i$ be the smallest nonnegative integer with $2^{K_i}>\max I_i-c_i$.
For $k=1,\dots,K_i$, let $B^{(i,k)}$ be the matrix obtained from $B$ by multiplying its $i$th column $b_i$ by $2^k$.
Starting from $u_{i,0}=0$, for $k=1,\dots,K_i$ in turn, we ask the oracle whether $Q(B^{(i,k)},t-(c_i+u_{i,k-1})b_i,2^j)$ contains an integral vector, and we let $u_{i,k}=u_{i,k-1}$ if it reports that it does and $u_{i,k}=u_{i,k-1}+2^{k-1}$ otherwise.
We let $z_i=c_i+u_{i,K_i}$, output $Bz$, and stop.
If no output is produced for $j\le j_1$, we output $\0$.
Each query is a parallelepiped $Q(B',t',2^j)$ in $\R^n$, given by the $2n$ inequalities $t'-2^j\1\le B'x\le t'+2^j\1$, where $B'$ is $B$ or some $B^{(i,k)}$, and so is rational, since $B'$, $t'$, and $2^j$ are.
There are $j_1-j_0+1=O(\sigma)$ values of $j$, and $2^j$ has encoding size $O(\sigma)$ for each of them.
The endpoints of $I_i$ are optimal values of linear programs over $Q_j$, so they have encoding size polynomial in $\sigma$ \cite[Corollary~10.2a]{Schrijver86}, so $K_i$ is polynomial in $\sigma$, and $B^{(i,k)}$ and $t-(c_i+u_{i,k-1})b_i$ have encoding size polynomial in $\sigma$.
Hence, on every input, the oracle is called a number of times polynomial in $\sigma$, on queries of encoding size polynomial in $\sigma$, and all other computations take time polynomial in $\sigma$.

We now show that the output is $v^*$ if the input satisfies the promise $\rho\le\lambda_1(L)/(4n^2)$.
The vector $z^*=B^{-1}v^*$ is integral, since $v^*\in B\Z^n$, and it lies in $Q_{j^*}$, since $\|Bz^*-t\|_\infty=\|v^*-t\|_\infty\le\|v^*-t\|=\rho\le2^{j^*}$.
Since $2^{j^*}<\lambda_1(L)/(2n^2)$, the parallelepiped $Q_{j^*}$ is thin, so $z^*$ is its unique integral vector by Proposition~\ref{cor:promise}.
For $j\le j^*$, the parallelepiped $Q_j$ is thin, since $2^j\le2^{j^*}$, so the oracle answers the query $Q_j$ correctly, and $Q_j\cap\Z^n\subseteq Q_{j^*}\cap\Z^n=\{z^*\}$.
Hence, if $Q_j$ contains no integral vector, we proceed to the next $j$.
Otherwise $Q_j\cap\Z^n=\{z^*\}$, that is, $Bz^*$ is the only vector of $L$ within $\ell_\infty$-distance $2^j$ of $t$.
Fix $i$ and $k$, and let $u=u_{i,k-1}$.
Since $B^{(i,k)}x+(c_i+u)b_i=By$ for $y_i=2^kx_i+c_i+u$ and $y_l=x_l$ for $l\ne i$, the integral vectors $x$ of $Q(B^{(i,k)},t-(c_i+u)b_i,2^j)$ correspond to the integral vectors $y$ with $\|By-t\|_\infty\le2^j$ and $y_i\equiv c_i+u\pmod{2^k}$.
So this parallelepiped contains an integral vector if and only if $z^*_i\equiv c_i+u\pmod{2^k}$.
It is thin, since $B^{(i,k)}$ is a basis of a sublattice of $L$, whose nonzero vectors have norm at least $\lambda_1(L)>2n^2\cdot2^j$, so the oracle answers it correctly.
Since $z^*_i$ is an integer in $I_i$, we have $0\le z^*_i-c_i<2^{K_i}$, and induction on $k$ gives $u_{i,k}=(z^*_i-c_i)\bmod2^k$, so $z_i=z^*_i$.
Hence $z=z^*$.
Since $Q_{j^*}$ contains $z^*$, the procedure stops at some $j\le j^*$ and outputs $Bz^*=v^*$.
This is why we try the values of $j$ in increasing order: for $j>j^*$, the queries may violate the promise, and the oracle may answer them incorrectly.
\end{proof}

\begin{proof}[Proof of Theorem~A]
By Proposition~\ref{cor:promise}, every thin parallelepiped in $\R^n$ satisfies $P'=P_I$, and it is rational by definition.
By Theorem~\ref{thm:bdd}, every query is a rational parallelepiped in $\R^n$.
Hence Theorem~\ref{thm:bdd} gives Theorem~A.
\end{proof}

We use the following reductions of Lyubashevsky and Micciancio.

\begin{theorem}[{\cite[Theorems~2 and~4 and their proofs]{LyubashevskyMicciancio09}}]\label{thm:lm}
Let $\gamma=\gamma(n)$ be bounded by a polynomial in $n$.
There is a deterministic polynomial-time reduction from uSVP$_\gamma$ to BDD$_{1/\gamma}$ that preserves the dimension.
If $n\ge2$ and $\gamma>2\sqrt{n/\log n}$, there is a randomized polynomial-time reduction from GapSVP$_\gamma$ to BDD$_{\sqrt{n/\log n}/\gamma}$ that preserves the dimension.
\end{theorem}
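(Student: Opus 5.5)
The statement is cited from Lyubashevsky and Micciancio, so the plan is to reconstruct their two reductions and then check the one extra claim the paper adds: that both preserve the dimension.

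\emph{First part: uSVP$_\gamma$ to BDD$_{1/\gamma}$.}
Let $L$ have basis $b_1,\dots,b_n$ with $\lambda_2(L)>\gamma\lambda_1(L)$, and let $s=\sum_i c_ib_i$ be a shortest nonzero vector. Some coefficient $c_i$ is odd, since otherwise $s/2\in L$, contradicting minimality.

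For each index $i$, I would form the sublattice $L_i$ with basis $b_1,\dots,2b_i,\dots,b_n$ and the target $b_i$. Then $b_i-s\in L_i$ whenever $c_i$ is odd, so $\operatorname{dist}(b_i,L_i)\le\lambda_1(L)$.

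Next I would bound $\lambda_1(L_i)$ from below. Every nonzero vector of $L_i$ is either a multiple $ks$ with $k$ even, hence of norm at least $2\lambda_1(L)$, or is not parallel to $s$. In the second case its norm is at least $\lambda_2(L)>\gamma\lambda_1(L)$, by the uniqueness promise. Because the uSVP promise is a strict inequality, the BDD promise $\operatorname{dist}\le\lambda_1(L_i)/\gamma$ holds, up to treating the case $2\lambda_1<\gamma\lambda_1$. I expect this bookkeeping to require scaling $s$ to $ks$ with odd $k$, or accepting a factor~$2$ slack. That is presumably why the paper cites the theorems ``and their proofs''.

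The BDD oracle then returns $v$, and $b_i-v$ is the candidate for $s$. I would query every $i$ and output the shortest nonzero candidate. All queries live in $\R^n$, so the dimension is preserved.

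\emph{Second part: GapSVP$_\gamma$ to BDD$_{\sqrt{n/\log n}/\gamma}$.}
Here I would use the Goldreich--Goldwasser style randomized test. Sample $x$ uniformly in a ball of radius $d=\sqrt{n/\log n}\,\eta$ around a random lattice point, or equivalently take the target $x\bmod L$. Call the BDD oracle on $(B,x)$ and check whether it returns the sampled lattice point.

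\textbf{NO case} ($\lambda_1>\gamma\eta$). The target lies within $d<\lambda_1\sqrt{n/\log n}/\gamma$ of the lattice, so the promise holds, the oracle answers correctly, and it always returns the origin point.

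\textbf{YES case} ($\lambda_1\le\eta$). Let $s$ be a short vector. The balls of radius $d$ centred at $0$ and at $s$ overlap in a constant fraction of their volume, since $\|s\|/d\le\sqrt{\log n/n}$. On the overlap the oracle cannot tell which centre was used, so it errs with noticeable probability. Repeating the test polynomially many times amplifies this gap.

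The condition $\gamma>2\sqrt{n/\log n}$ makes the BDD factor less than $1/2$, which is the regime required for BDD. Again, every query is in dimension $n$.

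The main obstacle is the volume-overlap estimate in the YES case. I would take that estimate from Goldreich--Goldwasser rather than rederive it.
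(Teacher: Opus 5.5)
\textbf{Gap in the uSVP reduction.} With modulus $2$, the sublattice $L_i$ always contains $2s$, so $\lambda_1(L_i)\le2\lambda_1(L)$. For the indices $i$ with $c_i$ odd, $\operatorname{dist}(b_i,L_i)=\lambda_1(L)$. Your queries therefore satisfy the promise of BDD$_{1/\gamma}$ only when $\gamma\le2$. For larger $\gamma$ you have a reduction to BDD$_{1/2}$. The shortfall is a factor $\gamma/2$, not a constant, so neither a factor-$2$ slack nor passing to odd multiples of $s$ repairs it.

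The missing idea is a modulus that exceeds the gap. Take a prime $p\ge\gamma$, let $L_i$ have basis $b_1,\dots,pb_i,\dots,b_n$, and query the targets $jb_i$ for all $j=1,\dots,p-1$.
\begin{itemize}
\item Since $s/p\notin L$, some $c_i$ is not divisible by $p$, hence coprime to $p$.
\item For $j\equiv c_i\pmod p$ you get $jb_i-s\in L_i$, so $\operatorname{dist}(jb_i,L_i)\le\lambda_1(L)$.
\item The multiples of $s$ in $L_i$ are the $ks$ with $p\mid k$, of norm at least $p\lambda_1(L)\ge\gamma\lambda_1(L)$. All other nonzero vectors of $L_i$ have norm at least $\lambda_2(L)>\gamma\lambda_1(L)$. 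So the promise holds.
\item Since $jb_i\notin L_i$, the target minus the returned vector is a nonzero vector of $L$ of norm at most $\lambda_1(L)$, hence $\pm s$.
\end{itemize}
This makes $n(p-1)$ queries. That is exactly where the hypothesis that $\gamma$ is polynomially bounded is used; your argument never uses it, which signals the omission.

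Also, the non-matching queries may violate the promise. So before taking the shortest nonzero candidate you must discard those not in $L$. This is the point of the paper's remark after the theorem, which replaces every answer $v$ with $B^{-1}v\notin\Z^n$ by $\0$.

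\textbf{GapSVP reduction.} This is the Goldreich--Goldwasser-style test that Lyubashevsky and Micciancio use, and your NO-case parameter check is right. Reducing the sample modulo $L$, as you do, is essential; otherwise an oracle that always returns $\0$ would pass every test. In the NO case, the oracle returns the sampled lattice point, not the origin. One correction: for $\|s\|/d$ as large as $\sqrt{\log n/n}$, the two balls overlap only in an inverse-polynomial fraction of their volume, which tends to zero, not a constant fraction. So the YES-case failure probability is only $1/\poly(n)$, and the polynomially many repetitions you mention are genuinely needed.
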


The proof of \cite[Theorem~2]{LyubashevskyMicciancio09} assumes that the oracle returns a lattice vector on every query.
Every oracle for BDD can be made to do so by replacing each answer $v$ to a query with basis $B$ such that $B^{-1}v\notin\Z^n$ by $\0$.
This does not change the answers to queries that satisfy the promise, so both results give reductions in the sense of Section~\ref{sec:prelim}.
Replacing the oracle by an algorithm and combining Theorem~\ref{thm:bdd} with Theorem~\ref{thm:lm}, we obtain the further consequences stated in the introduction.

\begin{corollary}\label{cor:usvp}
Suppose that an algorithm solves integer feasibility for rational parallelepipeds under the promise that every rational subpolyhedron of the input, and in particular the input itself, has Chv\'atal rank at most one, in time $T(n,\sigma')$, where $n$ is the dimension, $\sigma'$ is the encoding size of the input, and $T$ is nondecreasing.
If $T$ is bounded by a polynomial, then BDD$_{1/(4n^2)}$ and uSVP$_{4n^2}$ can be solved in deterministic polynomial time, and, for $n\ge2$, GapSVP$_\gamma$ with $\gamma=4n^{5/2}/\sqrt{\log n}$ can be solved in randomized polynomial time.
If $T(n,\sigma')=2^{o(n)}\poly(\sigma')$, then these problems can be solved in time $2^{o(n)}$ times a polynomial in the encoding size, deterministically for BDD and uSVP and with randomization for GapSVP.
If the algorithm is randomized, then all these statements hold with randomized algorithms for the three problems.
\end{corollary}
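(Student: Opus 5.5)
The plan is to compose three ingredients: Theorem~\ref{thm:bdd}, Proposition~\ref{cor:promise}, and Theorem~\ref{thm:lm}, replacing the oracle by the assumed algorithm.

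First, the oracle. Theorem~\ref{thm:bdd} only needs the oracle to be correct on thin parallelepipeds. By Proposition~\ref{cor:promise}, every rational subpolyhedron of a thin parallelepiped has Chv\'atal rank at most one, and the parallelepiped itself is rational. So thin parallelepipeds satisfy the stronger promise assumed here, and the given algorithm answers them correctly. On other queries it still halts within $T(n,\sigma')$ steps, which is all the reduction needs, since Theorem~\ref{thm:bdd} tolerates arbitrary answers there. Running the reduction of Theorem~\ref{thm:bdd} with the algorithm in place of the oracle then solves BDD$_{1/(4n^2)}$.

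Second, the running time. The reduction makes $\poly(\sigma)$ queries, each a parallelepiped in $\R^n$ of encoding size $\sigma'\le\poly(\sigma)$, and does $\poly(\sigma)$ further work. Since $T$ is nondecreasing, the total time is at most $\poly(\sigma)\cdot T(n,\poly(\sigma))+\poly(\sigma)$. If $T$ is polynomial, this is polynomial. If $T(n,\sigma')=2^{o(n)}\poly(\sigma')$, it is $2^{o(n)}\poly(\sigma)$, because the queries have dimension exactly $n$ (the reduction preserves the dimension).

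Third, uSVP and GapSVP via Theorem~\ref{thm:lm}. With $\gamma=4n^2$, uSVP$_{4n^2}$ reduces deterministically to BDD$_{1/(4n^2)}$, preserving the dimension. For GapSVP, take $\gamma=4n^{5/2}/\sqrt{\log n}$. Then
\[
\sqrt{n/\log n}/\gamma=1/(4n^2),
\]
and we need $\gamma>2\sqrt{n/\log n}$, which holds since $4n^{5/2}>2n^{1/2}$ for $n\ge2$. So GapSVP$_\gamma$ reduces, with randomization, to BDD$_{1/(4n^2)}$, again preserving the dimension. Both reductions make polynomially many queries of polynomial size and dimension $n$, so substituting the BDD algorithm from the first step gives the polynomial or $2^{o(n)}\poly$ bounds. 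There is a technical point: the Lyubashevsky--Micciancio reduction expects the oracle to return lattice vectors. We handle this by post-processing each answer as in the remark after Theorem~\ref{thm:lm}, replacing any $v$ with $B^{-1}v\notin\Z^n$ by $\0$.

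Fourth, the randomized case. If the algorithm is randomized with success probability $2/3$, the reduction makes polynomially many calls, so we first amplify. We repeat each call $O(\log(\text{number of calls}))$ times and take the majority, so that a union bound makes all promise-satisfying calls correct with probability at least $2/3$. This costs only a logarithmic factor, which preserves both time regimes.

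The main obstacle I expect is the bookkeeping around the promise. We must make sure that queries violating the promise cannot derail the reductions: the algorithm must still halt within its time bound, and any wrong answers must be tolerated. This is exactly what the reduction framework of Section~\ref{sec:prelim} and the ordering of $j$ in Theorem~\ref{thm:bdd} provide. Beyond that, the only real check is the $2^{o(n)}$ accounting, which works because every reduction preserves the dimension.
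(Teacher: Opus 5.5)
Your proposal follows the paper's proof step for step. You use Proposition~\ref{cor:promise} so that the algorithm can replace the oracle of Theorem~\ref{thm:bdd}, and you bound the running time by $T(n,\poly(\sigma))\poly(\sigma)$ using monotonicity of $T$ and dimension preservation. You then apply Theorem~\ref{thm:lm} with $\gamma=4n^2$ and $\gamma=4n^{5/2}/\sqrt{\log n}$, and your checks $\sqrt{n/\log n}/\gamma=1/(4n^2)$ and $\gamma>2\sqrt{n/\log n}$ are correct. The deterministic statements, including the $2^{o(n)}$ regime, are fine.

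The randomized case does not go through as you wrote it for GapSVP. You amplify only until all promise-satisfying calls are answered correctly with probability at least $2/3$. That is enough for BDD and uSVP, whose reductions are deterministic. The Lyubashevsky--Micciancio reduction for GapSVP, however, is itself randomized and may err with probability up to $1/3$ even with a correct oracle. The union bound then gives success probability only $2/3-1/3=1/3$, which is below $1/2$ and cannot be boosted by majority vote.

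The fix, as in the paper, is to push the per-query error down to, say, $1/(12K)$ for $K$ calls. Then all promise-satisfying queries are correct with probability at least $11/12$, and GapSVP is answered correctly with probability at least $2/3-1/12=7/12$. A majority over a constant number of independent runs of this GapSVP algorithm restores $2/3$.

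You should also answer a repeated query with the same memoized answer. The reductions are guaranteed to work for every oracle, that is, for every fixed function on the promise-violating queries. Independent reruns could give different answers to the same promise-violating query, and that guarantee does not cover such inconsistent answers.
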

\begin{proof}
By Proposition~\ref{cor:promise}, every thin parallelepiped satisfies this promise, so, as in the proof of Theorem~A, the algorithm can serve as the oracle in Theorem~\ref{thm:bdd}.
Since the queries are in dimension $n$ and have encoding size polynomial in the encoding size $\sigma$ of $(B,t)$, and $T$ is nondecreasing, we obtain a deterministic algorithm for BDD$_{1/(4n^2)}$ that runs in time $T(n,\poly(\sigma))\poly(\sigma)$ on every input.
The statements about uSVP and GapSVP follow from Theorem~\ref{thm:lm} with $\gamma=4n^2$ and $\gamma=4n^{5/2}/\sqrt{\log n}$, respectively, since in both cases the BDD factor is $1/(4n^2)$.
The statements about running times $2^{o(n)}$ hold since all the reductions preserve the dimension and make a polynomial number of queries of polynomial encoding size.

Suppose now that the algorithm is randomized and answers correctly with probability at least $2/3$.
The algorithms above make a number $K$ of queries polynomial in the encoding size of their input.
We answer each new query by the majority answer of $O(\log K)$ independent runs of the algorithm, and give the same answer if the query is repeated, so that, by Hoeffding's inequality, each query that satisfies the promise is answered correctly with probability at least $1-1/(12K)$.
By the union bound, all such queries are answered correctly with probability at least $11/12$.
Hence BDD and uSVP are solved correctly with probability at least $11/12$, and GapSVP with probability at least $2/3-1/12=7/12$.
Taking the majority answer of a constant number of independent runs of this algorithm for GapSVP raises this probability to $2/3$, again by Hoeffding's inequality.
\end{proof}

\subsection{Hardness from learning with errors}\label{sec:lwe}

To solve decision-LWE, we use a single parallelepiped defined by a random \emph{$q$-ary lattice}, that is, a lattice $L$ with $q\Z^m\subseteq L\subseteq\Z^m$ \cite[Section~2]{MicciancioRegev09}.
With high probability, it is thin, and so satisfies the promise, and it contains an integral vector for LWE instances but not for uniform instances (Proposition~\ref{thm:lwe}).
Thus a single call to an algorithm for integer feasibility distinguishes the two cases (Theorem~\ref{cor:lwe}).
Combined with a result of Regev, this gives consequences for GapSVP and SIVP for quantum algorithms (Corollary~\ref{cor:regev}).

For $A\in\Z_q^{n\times m}$ we consider the $q$-ary lattice
\[
\Lambda_q(A\T)=\{y\in\Z^m: y\equiv A\T s \pmod q \text{ for some } s\in\Z_q^n\},
\]
which contains $q\Z^m$ and hence has full rank.
For an LWE instance $(A,t)$ with error $e$, we have $t-e\in\Lambda_q(A\T)$, so $t$ is close to $\Lambda_q(A\T)$ when $e$ is short.
In this subsection, $n$ is the dimension of $s$, and also of the lattice problems in Theorem~\ref{thm:regev}, while the lattices $\Lambda_q(A\T)$ and the parallelepipeds $Q(B,t,r)$ have dimension $m$.

By Proposition~\ref{cor:promise}, the parallelepipeds defined by $\Lambda_q(A\T)$ satisfy the promise whenever they are thin, and they are thin if their radius is small compared with $\lambda_1(\Lambda_q(A\T))$.
We first show, by a standard counting argument, that $\lambda_1(\Lambda_q(A\T))$ is large with high probability over the choice of $A$.

\begin{lemma}\label{lem:lambda1}
Let $q$ be prime, and let $A\in\Z_q^{n\times m}$ be uniformly random.
Then $\lambda_1(\Lambda_q(A\T))>q/8$ with probability at least $1-q^n2^{-m}$.
\end{lemma}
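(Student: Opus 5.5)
We will bound the probability of the complement event by a union bound over short candidate vectors. A nonzero vector $y\in\Lambda_q(A\T)$ with $\|y\|\le q/8$ has all entries in $(-q/2,q/2)$. It is therefore nonzero modulo $q$. Moreover, $y\equiv A\T s\pmod q$ for some $s\in\Z_q^n$, and $s\neq\0$ because $y\not\equiv\0$. So the event $\lambda_1(\Lambda_q(A\T))\le q/8$ implies that there exist $s\in\Z_q^n\setminus\{\0\}$ and $y\in\Z^m$ with $\|y\|\le q/8$ and $A\T s\equiv y\pmod q$. We take the union bound over the fewer than $q^n$ nonzero vectors $s$.

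Fix $s\neq\0$. Since $q$ is prime and $A$ is uniform, the vector $A\T s\bmod q$ is uniformly distributed on $\Z_q^m$. Indeed, each column $a_k$ of $A$ is independent and uniform, and $a_k\T s$ is uniform on $\Z_q$ because some entry of $s$ is invertible. Hence the probability that $A\T s$ is congruent to a short vector is at most $N/q^m$, where $N$ is the number of vectors $y\in\Z^m$ with $\|y\|\le q/8$. These vectors reduce to distinct residues, since their entries lie in $(-q/2,q/2)$.

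It remains to show $N\le (q/2)^m$, which gives $N/q^m\le 2^{-m}$ and the claimed total bound $q^n2^{-m}$. This counting step is the main obstacle, and a crude bound suffices. For every $y$ with $\|y\|_\infty\le q/8$, count the vectors $z$ with $\|z\|_\infty<q/4$: each entry has at most $2\lfloor q/8\rfloor+1\le q/4+1$ choices. This is at most $q/2$ once $q\ge4$. For $q\le 7$ we have $q/8<1$, so the only short vector is $\0$ and $N=1\le(q/2)^m$. The same holds for $q=2,3$, where $(q/2)^m\ge1$.

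Combining the steps, the probability that $\lambda_1(\Lambda_q(A\T))\le q/8$ is at most $(q^n-1)(q/2)^m/q^m<q^n2^{-m}$. This proves the lemma.
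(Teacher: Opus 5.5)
Your proof is correct and is essentially the paper's argument: a union bound over the fewer than $q^n$ nonzero $s$, uniformity of $A\T s$ modulo the prime $q$, and the per-coordinate count $2\lfloor q/8\rfloor+1\le q/2$ of residues with a representative of absolute value at most $q/8$. The paper phrases this directly for the $\ell_\infty$-norm and rules out $q\Z^m$ separately, whereas you count integer points of the Euclidean ball via the $\ell_\infty$ box; the sentence about vectors $z$ with $\|z\|_\infty<q/4$ is stray and should simply be replaced by the observation that $\|y\|\le q/8$ implies $\|y\|_\infty\le q/8$.
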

\begin{proof}
Let $L=\Lambda_q(A\T)$.
A nonzero vector of $q\Z^m$ has $\ell_\infty$-norm at least $q$, and every other vector of $L$ is congruent modulo $q$ to $A\T s$ for some nonzero $s\in\Z_q^n$.
For fixed nonzero $s$, the vector $A\T s$ is uniformly random in $\Z_q^m$, since $q$ is prime.
At most $2\fl{q/8}+1\le q/2$ residues modulo $q$ have a representative $c$ with $|c|\le q/8$.
Hence, for fixed nonzero $s$, all entries of $A\T s$ are such residues with probability at most $2^{-m}$.
By the union bound over the fewer than $q^n$ nonzero $s$, the probability that $L$ has a nonzero vector of $\ell_\infty$-norm at most $q/8$ is at most $q^n2^{-m}$.
Otherwise every nonzero $y\in L$ satisfies $\|y\|\ge\|y\|_\infty>q/8$, and so $\lambda_1(L)>q/8$.
\end{proof}

We now show that the parallelepiped defined by an instance $(A,t)$ is thin with high probability, and that it distinguishes LWE instances from uniform instances.

\begin{proposition}\label{thm:lwe}
Let $q$ be prime, let $k=\lceil\log_2q\rceil$ and $m=3nk$, and assume $q>16m^2$.
Let $r=\fl{(q-1)/(16m^2)}$, let $\chi$ be a probability distribution on $\Z$, and let $\varepsilon_\chi$ be the probability that a vector $e\in\Z^m$ with independent entries distributed according to $\chi$ satisfies $\|e\|_\infty>r$.
Given $A$, a basis $B\in\{0,\dots,q\}^{m\times m}$ of $\Lambda_q(A\T)$ can be computed in time polynomial in $n$ and $\log q$.
For $t\in\Z_q^m$, let $Q=Q(B,t,r)$.
If $(A,t)$ is an LWE instance or a uniform instance, then $Q$ is thin with probability at least $1-2^{-2nk}$, and $Q$ contains an integral vector with probability at least $1-\varepsilon_\chi$ in the first case and at most $2^{-m}$ in the second.
\end{proposition}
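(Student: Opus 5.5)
The proof has three parts: computing a basis of $\Lambda_q(A\T)$, showing that $Q$ is thin, and comparing the two cases.

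\emph{The basis.} We compute a basis of $\Lambda_q(A\T)$ as the Hermite normal form of the generating set consisting of the columns of $A\T$ and of $qI_m$. Since $q\Z^m\subseteq\Lambda_q(A\T)$, the determinant divides $q^m$. For a column-style Hermite normal form, each diagonal entry is at most $q$, and the off-diagonal entries are reduced modulo the diagonal entry in their row, so all entries lie in $\{0,\dots,q\}$. Concretely, since $q$ is prime we can do Gaussian elimination of $A$ over $\Z_q$ to put it in reduced row echelon form. Then $B$ is built from the pivot structure, with columns $A'^{\mathsf T}$-rows on the pivot coordinates and $qe_j$ on the non-pivot coordinates. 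All of this is polynomial in $n$ and $\log q$.

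\emph{Thinness.} $Q$ is thin if $r>0$ and $r<\lambda_1(L)/(2m^2)$. First, $r\ge1$ because $q-1\ge16m^2$; this needs $q>16m^2$ with integrality. Second, if $\lambda_1(L)>q/8$, then $2m^2r\le(q-1)/8<q/8<\lambda_1(L)$. By Lemma~\ref{lem:lambda1}, this fails with probability at most $q^n2^{-m}\le2^{nk}2^{-3nk}=2^{-2nk}$. The event depends only on $A$, so it covers both the LWE case and the uniform case.

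\emph{LWE instances.} Here $t-e\in\Lambda_q(A\T)$, and $t-e=Bz$ for an integral $z$ because $B$ is a basis. Then $\|Bz-t\|_\infty=\|e\|_\infty\le r$ with probability at least $1-\varepsilon_\chi$, so $z$ is an integral vector of $Q$.

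\emph{Uniform instances.} $Q$ contains an integral vector exactly when some $y\in\Lambda_q(A\T)$ satisfies $\|y-t\|_\infty\le r$. Reducing modulo $q$, this requires $t\equiv A\T s+c\pmod q$ for some $s\in\Z_q^n$ and some $c$ with $\|c\|_\infty\le r$. For each fixed $A$, the number of such residue vectors is at most $q^n(2r+1)^m$. Since $t$ is uniform, the probability is at most
\[
q^n\Big(\frac{2r+1}{q}\Big)^m .
\]
To bound this, use $2r+1\le 3r\le 3q/(16m^2)\le q/(4m^2)$, so $(2r+1)/q\le 1/4$. The probability is then at most $q^n4^{-m}\le 2^{nk}2^{-2m}=2^{nk-6nk}\le2^{-m}$, because $m=3nk$ and $5nk\ge3nk$.

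The main thing to get right is the uniform-case counting, in particular making sure the union over $s$ does not swamp the bound; the slack from $m=3nk$ handles it. A secondary point is the entry bound $\{0,\dots,q\}$ for $B$, which I would obtain from the Hermite normal form structure.
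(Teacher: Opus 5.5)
Your proposal is correct and follows the paper's proof essentially step for step: a Hermite normal form basis, Lemma~\ref{lem:lambda1} with $q^n\le2^{m/3}$ for thinness, $t-e\in\Lambda_q(A\T)$ for LWE instances, and the count $q^n(2r+1)^m/q^m$ for uniform instances. One small point: the bound $B_{ii}\le q$ on the diagonal does not follow from $\det B\mid q^m$ alone, and the paper instead derives $B_{ii}\mid q$ from $qe_i=By$ and lower triangularity, though your explicit row-echelon basis also settles the entry bound.
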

\begin{proof}
We first compute $B$ and $Q$.
We let $B$ be the Hermite normal form basis of the lattice generated by the columns of $A\T$ and the vectors $qe_1,\dots,qe_m$, that is, of $\Lambda_q(A\T)$, which can be computed in time $\poly(n,\log q)$ \cite[Theorem~5.3]{Schrijver86}.
For every $i$, we have $qe_i\in\Lambda_q(A\T)$, so $qe_i=By$ for some $y\in\Z^m$.
Since $B$ is lower triangular, the first $i$ rows of this system give $y_1=\dots=y_{i-1}=0$ and $B_{ii}y_i=q$.
Hence each diagonal entry of $B$ divides $q$, and the other entries in its row are nonnegative and smaller, so $B\in\{0,\dots,q\}^{m\times m}$.
Then $Q=\{x: Bx\le t+r\1,\ -Bx\le -t+r\1\}$ has integral data, $2m$ inequalities, and encoding size polynomial in $n$ and $\log q$.

Next we show that $Q$ is thin with high probability.
In LWE instances and in uniform instances alike, $A$ is uniformly random, so $\lambda_1(\Lambda_q(A\T))>q/8$ with probability at least $1-q^n2^{-m}$ by Lemma~\ref{lem:lambda1}.
Moreover, $q^n\le2^{nk}=2^{m/3}$, so this probability is at least $1-2^{-2nk}$.
Since $q>16m^2$, we have $r\ge1$ and $r<q/(16m^2)$.
Hence, if $\lambda_1(\Lambda_q(A\T))>q/8$, then $r<(q/8)/(2m^2)<\lambda_1(\Lambda_q(A\T))/(2m^2)$, and so $Q$ is thin.

We then show that, for an LWE instance, $Q$ contains an integral vector with probability at least $1-\varepsilon_\chi$.
By the definition of $\varepsilon_\chi$, the error $e$ satisfies $\|e\|_\infty\le r$ with probability at least $1-\varepsilon_\chi$.
In this case, since $t\equiv A\T s+e\pmod q$, the vector $t-e\in\Lambda_q(A\T)$ lies within $\ell_\infty$-distance $r$ of $t$, and so $Q$ contains an integral vector.

Finally, we show that, for a uniform instance, $Q$ contains an integral vector with probability at most $2^{-m}$.
If $Q$ contains an integral vector, then $t\equiv A\T s+e'\pmod q$ for some $s\in\Z_q^n$ and $e'\in\{-r,\dots,r\}^m$.
There are at most $q^n(2r+1)^m$ such residues, and so $Q$ contains an integral vector with probability at most $q^n(2r+1)^m/q^m$.
Since $q>16m^2\ge16$, we have $2r+1\le q/(8m^2)+1\le q/4$, and $q^n\le2^{nk}=2^{m/3}$, so this probability is at most $2^{m/3}4^{-m}\le2^{-m}$.
\end{proof}

For the error distribution $\bar\Psi_\alpha$ and suitable parameters, Proposition~\ref{thm:lwe} gives an explicit distribution of hard instances.

\begin{theorem}\label{cor:lwe}
Let $n$ be sufficiently large, let $q=q(n)$ be a prime in $[n^3,2n^3]$, which exists by Bertrand's postulate, let $k=\lceil\log_2q\rceil$, let $\alpha=1/(200n^2k^2\log n)$, and let $m=3nk$.
Let $\mathcal D$ be the distribution of the parallelepiped $Q$ of Proposition~\ref{thm:lwe} with $\chi=\bar\Psi_\alpha$ when $(A,t)$ is an LWE instance or a uniform instance, each with probability $1/2$.
The distribution $\mathcal D$ can be sampled in polynomial time, its samples are parallelepipeds in $\R^m$ given by integral constraints with coefficients of absolute value less than $2q$, and a sample of $\mathcal D$ is thin with probability at least $1-2^{-2nk}$.
Suppose that a randomized polynomial-time algorithm decides integer feasibility correctly with probability at least $1/2+n^{-c}$ for some constant $c>0$, over the choice of $Q$ from $\mathcal D$ and its own randomness.
Then there is a randomized polynomial-time algorithm that solves decision-$\LWE_{n,q,\bar\Psi_\alpha}$ with $m$ samples.
\end{theorem}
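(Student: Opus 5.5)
We apply Proposition~\ref{thm:lwe} with the stated parameters and check its hypotheses. Since $q\ge n^3$ and $m=3nk=O(n\log n)$, we have $q>16m^2$ for large $n$. Sampling $\mathcal D$ takes polynomial time: we draw $A$, $s$, $t$ uniformly, and $e$ from $\bar\Psi_\alpha$. We compute the Hermite normal form basis $B$ as in Proposition~\ref{thm:lwe}, and $r=\fl{(q-1)/(16m^2)}$. The constraints $\pm Bx\le\pm t+r\1$ are integral. Their coefficients are entries of $B$, which lie in $\{0,\dots,q\}$, entries of $t$, which are less than $q$, and $r$. Hence the right-hand sides $t_i+r$ have absolute value less than $2q$, and the coefficient bound is immediate. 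Thinness with probability at least $1-2^{-2nk}$ is quoted directly from Proposition~\ref{thm:lwe}.

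The main quantitative step is to bound $\varepsilon_\chi$ for $\chi=\bar\Psi_\alpha$. An entry of $e$ is $\lfloor X\rceil$ with $X$ normal of standard deviation $\sigma=\alpha q/\sqrt{2\pi}$. If $|\lfloor X\rceil|>r$, then $|X|>r-1/2$. Here $r\ge q/(16m^2)-1$ with $m^2=9n^2k^2$, so $r\approx q/(144n^2k^2)$. On the other hand, $\sigma\approx q/(500n^2k^2\log n)$. Thus $r/\sigma=\Omega(\log n)$, and the Gaussian tail gives a per-entry probability of at most $\exp(-\Omega(\log^2 n))$. A union bound over the $m$ entries then gives $\varepsilon_\chi\le m\exp(-\Omega(\log^2n))$, which is $n^{-\omega(1)}$. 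I expect this constant-chasing to be the only delicate part. It requires checking that the rounding loss of $1/2$ and the floor in $r$ are negligible, which holds because $q/(n^2k^2)\to\infty$.

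For the reduction, we are given an instance $(A,t)$ of decision-LWE. We form $Q$ as above, run the algorithm on $Q$, and output its answer, with $1$ meaning that $Q$ is feasible. Let $p_1$ and $p_0$ be the probabilities that the algorithm reports feasible on LWE instances and on uniform instances, respectively. Since $\mathcal D$ is the even mixture of the two cases, the success hypothesis reads
\[
\tfrac12\Pr[\text{correct}\mid\text{LWE}]+\tfrac12\Pr[\text{correct}\mid\text{uniform}]\ge\tfrac12+n^{-c}.
\]
We compare correctness with the true feasibility of $Q$. In the LWE case, $Q$ is feasible except with probability $\varepsilon_\chi$, so $p_1\ge\Pr[\text{correct}\mid\text{LWE}]-\varepsilon_\chi$. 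In the uniform case, $Q$ is infeasible except with probability $2^{-m}$, so $1-p_0\ge\Pr[\text{correct}\mid\text{uniform}]-2^{-m}$. Adding these gives $p_1-p_0\ge 2n^{-c}-\varepsilon_\chi-2^{-m}\ge n^{-c}$ for large $n$. This is the required advantage, and the reduction runs in polynomial time.

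Note that this argument does not need the promise at all. The hypothesis already averages over $\mathcal D$, including its rare non-thin samples. The promise is relevant only for interpreting the result, namely that the samples satisfy $P'=P_I$ with high probability by Proposition~\ref{cor:promise}.
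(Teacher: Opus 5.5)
Your proof is correct and follows the paper's proof essentially step by step. You check $q>16m^2$; you read off polynomial-time sampling, the coefficient bound, and thinness from Proposition~\ref{thm:lwe}; you bound $\varepsilon_\chi$ by a Gaussian tail and a union bound over the $m$ entries; and you make the same advantage computation, giving $2n^{-c}-\varepsilon_\chi-2^{-m}\ge n^{-c}$.

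The main difference is in the tail estimate. The paper uses explicit constants: it shows $r\ge\alpha q\log n$ and uses $|\lfloor X\rceil|>r\Rightarrow|X|\ge r+\tfrac12$ to obtain $\varepsilon_\chi\le 2m\exp(-\pi\log^2n)$, whereas you argue asymptotically. Separately, your sampling step should also say that $q$ is found in polynomial time by testing the integers in $[n^3,2n^3]$ for primality.
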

\begin{proof}
Since $n$ is sufficiently large, we have $q>16m^2$, so Proposition~\ref{thm:lwe} applies with $\chi=\bar\Psi_\alpha$.

We first prove the properties of $\mathcal D$ stated in the theorem.
The distribution $\mathcal D$ can be sampled in polynomial time, since $q$ can be found in time polynomial in $n$ by testing the integers in $[n^3,2n^3]$ for primality, $\bar\Psi_\alpha$ can be sampled in polynomial time, and $Q$ can be computed in polynomial time.
Its samples are the parallelepipeds $Q$ in $\R^m$, given by the integral constraints $t-r\1\le Bx\le t+r\1$, whose coefficients have absolute value less than $2q$, since $B\in\{0,\dots,q\}^{m\times m}$, $t\in\{0,\dots,q-1\}^m$, and $r<q$.
By Proposition~\ref{thm:lwe}, a sample of $\mathcal D$ is thin with probability at least $1-2^{-2nk}$.

Next we show that, for an LWE instance with error $e$, $Q$ contains an integral vector with probability at least $1-2m\exp(-\pi\log^2n)$.
By Proposition~\ref{thm:lwe}, this probability is at least $1-\varepsilon_\chi$.
Moreover, $r=\fl{(q-1)/(16m^2)}\ge\alpha q\log n$ for sufficiently large $n$, since $\alpha q\log n=q/(200n^2k^2)$, $16m^2=144n^2k^2$, $q\ge n^3$, and $k=O(\log n)$.
Each entry of $e$ is distributed according to $\bar\Psi_\alpha$, that is, it equals $\lfloor X\rceil$ with $X$ normal with mean zero and standard deviation $\bar\sigma=\alpha q/\sqrt{2\pi}$.
Then each entry has absolute value greater than $r$ with probability at most $\Pr[|X|\ge r+\frac12]\le\Pr[|X|\ge\alpha q\log n]=\Pr[|X|\ge\sqrt{2\pi}\log n\cdot\bar\sigma]\le2\exp(-\pi\log^2n)$, by the Gaussian tail bound $\Pr[|X|\ge\tau\bar\sigma]\le2\exp(-\tau^2/2)$ for $\tau\ge0$.
Hence, by the union bound over the $m$ entries, $\varepsilon_\chi\le2m\exp(-\pi\log^2n)$, which is smaller than every inverse polynomial in $n$ for sufficiently large $n$.

It remains to construct the algorithm for decision-LWE.
Given $(A,t)$, our algorithm computes $Q$ as in Proposition~\ref{thm:lwe}, runs the given algorithm on $Q$, and outputs $1$ if and only if the given algorithm reports that $Q$ contains an integral vector.
Since $q\le2n^3$, our algorithm runs in time polynomial in $n$.
Let $\beta_1$ and $\beta_0$ be the probabilities that the given algorithm answers incorrectly on $Q$ for LWE instances and for uniform instances, respectively.
Then $\beta_1+\beta_0\le1-2n^{-c}$, since the given algorithm answers incorrectly with probability at most $1/2-n^{-c}$ for $Q$ drawn from $\mathcal D$.
By Proposition~\ref{thm:lwe}, the output is $1$ with probability at least $1-\varepsilon_\chi-\beta_1$ on LWE instances and at most $2^{-m}+\beta_0$ on uniform instances.
These probabilities differ by at least $2n^{-c}-\varepsilon_\chi-2^{-m}$, which is at least $n^{-c}$ for sufficiently large $n$.
\end{proof}

\begin{proof}[Proof of Theorem~B]
By Proposition~\ref{cor:promise}, thin parallelepipeds satisfy $P'=P_I$.
So a sample of $\mathcal D$ satisfies $P'=P_I$ with high probability.
Moreover, $2/3\ge1/2+1/n$ for $n\ge6$.
Hence Theorem~\ref{cor:lwe} gives Theorem~B.
\end{proof}

We use the following result of Regev.

\begin{theorem}[{\cite[Theorem~3.1, Section~3.3, and Lemma~4.4]{Regev09}}]\label{thm:regev}
Let $q=q(n)$ be a prime and $m=m(n)$ an integer, both bounded by a polynomial in $n$, and let $\alpha=\alpha(n)\in(0,1)$ with $\alpha q>2\sqrt n$ for all sufficiently large $n$.
If there is a randomized polynomial-time algorithm that solves decision-$\LWE_{n,q,\bar\Psi_\alpha}$ with $m$ samples, then there is a polynomial-time quantum algorithm that solves GapSVP$_\gamma$ and SIVP$_\gamma$ on $n$-dimensional lattices for some $\gamma=\tilde O(n/\alpha)$.
\end{theorem}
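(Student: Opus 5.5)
The plan is to assemble the statement from three pieces of \cite{Regev09}, checking that our definition of decision-LWE (a fixed number $m$ of samples, inverse-polynomial advantage averaged over uniformly random $s$, and errors lifted to $\Z$) matches the hypotheses of each piece. The chain is:
\[
\text{decision-LWE (average $s$)} \;\Rightarrow\; \text{decision-LWE (every $s$)} \;\Rightarrow\; \text{search-LWE} \;\Rightarrow\; \text{quantum GapSVP$_\gamma$, SIVP$_\gamma$.}
\]

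First I reconcile the formats. Reducing $e$ modulo $q$ turns an LWE instance $(A,A\T s+e)$ into $m$ samples $(a_i,\langle a_i,s\rangle+e_i)$ of Regev's distribution $A_{s,\bar\Psi_\alpha}$ on $\Z_q^n\times\Z_q$. A uniform instance becomes $m$ uniform samples. So a distinguisher on our pairs $(A,t)$ is a distinguisher for $m$ samples of Regev's distributions, with the same advantage $n^{-c}$, averaged over uniform $s$. Since $m$ is polynomial, this is a legitimate polynomial-time algorithm in Regev's model, which allows polynomially many samples.

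Next I apply Lemma~4.4 of \cite{Regev09}, the average-case to worst-case amplification for $q$ prime and polynomial. Given samples from $A_{s,\chi}$ for an arbitrary fixed $s$, add $\langle a_i,s'\rangle$ for a fresh uniform $s'$. This maps $A_{s,\chi}$ to $A_{s+s',\chi}$ and fixes the uniform distribution, so the averaged advantage becomes an advantage for every $s$. Estimating the acceptance probabilities by repeated sampling, which costs polynomially many samples, then yields a distinguisher with overwhelming success for every $s$. The search-to-decision reduction from the same section recovers $s$ one coordinate at a time. For each coordinate $j$ and guess $g\in\Z_q$, it shifts the second component of each sample by $r\cdot g$ and the $j$th entry of $a$ by $r$, with $r$ uniform; this gives $A_{s,\chi}$-type samples if $g=s_j$ and uniform samples otherwise. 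This uses $q$ prime and $q\le\poly(n)$ to enumerate the guesses. The result is a randomized polynomial-time algorithm for search-$\LWE_{n,q,\bar\Psi_\alpha}$.

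Finally I invoke Theorem~3.1 with Section~3.3 of \cite{Regev09}. A solver for $\LWE_{q,\bar\Psi_\alpha}$ with $\alpha q>2\sqrt n$ yields, by the iterative quantum discrete Gaussian sampling reduction, a quantum algorithm for SIVP$_{\tilde O(n/\alpha)}$ and GapSVP$_{\tilde O(n/\alpha)}$. Section~3.3 handles the discretized error $\bar\Psi_\alpha$ in place of the continuous $\Psi_\alpha$: it converts samples with continuous error into samples with rounded error, whose parameter is still within the hypothesis $\alpha q>2\sqrt n$. This step is also where the condition $\alpha q>2\sqrt n$ enters, and it fixes $\gamma=\tilde O(n/\alpha)$.

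The main obstacle is bookkeeping rather than mathematics. The hard part is the amplification step: turning an inverse-polynomial advantage that holds only on average over $(A,s,e)$ and the algorithm's coins into the worst-case-over-$s$ oracle that Regev's quantum reduction requires. I would handle it by a Markov-type argument: an $n^{-c}/2$ fraction of $s$ gives advantage at least $n^{-c}/2$. Combined with the rerandomization of $s$ and Chernoff-based estimation of acceptance probabilities, this reaches every $s$. Everything else is a direct citation.
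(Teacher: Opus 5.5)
Your proposal is correct and follows the paper's route. The paper likewise just cites Regev's chain, and it checks that its averaged notion of solving decision-LWE (exactly $m$ samples, lifted errors) meets the hypothesis of \cite[Lemma~4.4]{Regev09} by the same averaging argument: a fraction at least $n^{-c}/2$ of secrets with advantage at least $n^{-c}/2$. The only inaccuracy is where you place each step in Regev's paper. Lemma~4.4 there is the combined statement: it already contains the averaged-to-worst-case step, the decision-to-search step, and the rounding of continuous samples that turns a $\bar\Psi_\alpha$ distinguisher into a solver for the continuous $\Psi_\alpha$. Section~3.3 is instead where GapSVP and SIVP are reduced to the discrete Gaussian sampling provided by Theorem~3.1, and the condition $\alpha q>2\sqrt n$ is a hypothesis of Theorem~3.1, not of the rounding step.
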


Theorem~\ref{thm:regev} is stated with our notion of solving decision-LWE.
Regev's results apply to it since, by averaging over $s$, an algorithm that solves decision-LWE in our sense distinguishes, for a fraction at least $n^{-c}/2$ of all $s\in\Z_q^n$, the LWE instances with secret $s$ from uniform instances, with output probabilities that differ by at least $n^{-c}/2$, as required in \cite[Lemma~4.4]{Regev09}.
Combining Theorem~\ref{cor:lwe} with Theorem~\ref{thm:regev}, we obtain the consequence for quantum algorithms stated in the introduction.

\begin{corollary}\label{cor:regev}
Suppose that there is a randomized polynomial-time algorithm as in Theorem~\ref{cor:lwe}.
Then, for some $\gamma=\tilde O(n^3)$, there is a polynomial-time quantum algorithm that solves GapSVP$_\gamma$ and SIVP$_\gamma$ on $n$-dimensional lattices.
\end{corollary}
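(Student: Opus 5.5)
The plan is to compose Theorem~\ref{cor:lwe} with Theorem~\ref{thm:regev}. Only two things need checking: that the hypotheses of Theorem~\ref{thm:regev} are met by the parameters fixed in Theorem~\ref{cor:lwe}, and that the resulting factor $\gamma=\tilde O(n/\alpha)$ is $\tilde O(n^3)$.

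\textbf{Step 1: the LWE algorithm.} Suppose a randomized polynomial-time algorithm is given as in Theorem~\ref{cor:lwe}. That theorem then yields a randomized polynomial-time algorithm that solves decision-$\LWE_{n,q,\bar\Psi_\alpha}$ with $m=3nk$ samples, in the sense of Section~\ref{sec:prelim}. The parameters are $q\in[n^3,2n^3]$ prime, $k=\lceil\log_2q\rceil=O(\log n)$, and $\alpha=1/(200n^2k^2\log n)$.

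\textbf{Step 2: the hypotheses of Theorem~\ref{thm:regev}.}
\begin{itemize}
\item $q$ is prime and bounded by $2n^3$.
\item $m=3nk=O(n\log n)$ is polynomially bounded.
\item $\alpha\in(0,1)$.
\item For the last hypothesis, $\alpha q\ge n^3/(200n^2k^2\log n)=n/(200k^2\log n)$. Since $k=O(\log n)$, this is $\Omega(n/\log^3 n)$, which exceeds $2\sqrt n$ for all sufficiently large $n$.
\end{itemize}
The remark after Theorem~\ref{thm:regev} already explains that our notion of solving decision-LWE suffices for Regev's results, via averaging over $s$. No extra work is needed there.

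\textbf{Step 3: the factor.} Theorem~\ref{thm:regev} gives a polynomial-time quantum algorithm for GapSVP$_\gamma$ and SIVP$_\gamma$ with $\gamma=\tilde O(n/\alpha)$. Here $1/\alpha=200n^2k^2\log n=\tilde O(n^2)$, so $\gamma=\tilde O(n^3)$, with the polylogarithmic factors $k^2\log n=O(\log^3 n)$ absorbed into $\tilde O$.

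\textbf{Main obstacle.} The only step needing care is the inequality $\alpha q>2\sqrt n$. It holds because $q$ was chosen of order $n^3$ while $1/\alpha$ is only of order $n^2\,\mathrm{polylog}(n)$, and it fails for small $n$. This is harmless, since both Theorem~\ref{thm:regev} and the statement only require it for sufficiently large $n$, and finitely many $n$ can be handled by brute force inside the quantum algorithm.
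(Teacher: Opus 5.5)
Your proposal is correct and follows essentially the same route as the paper: compose Theorem~\ref{cor:lwe} with Theorem~\ref{thm:regev}, check that $q\le 2n^3$ is prime, that $q$ and $m=3nk$ are polynomially bounded, and that $\alpha q\ge n^3/(200n^2k^2\log n)>2\sqrt n$ for large $n$, and read off $\gamma=\tilde O(n/\alpha)=\tilde O(n^3)$. Your remark about handling finitely many $n$ by brute force is harmless but unnecessary, since Theorem~\ref{thm:regev} only requires $\alpha q>2\sqrt n$ for all sufficiently large $n$.
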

\begin{proof}
Let $q$, $k$, $\alpha$, and $m$ be as in Theorem~\ref{cor:lwe}.
By Theorem~\ref{cor:lwe}, there is a randomized polynomial-time algorithm that solves decision-$\LWE_{n,q,\bar\Psi_\alpha}$ with $m$ samples.
Theorem~\ref{thm:regev} applies, since $q\le2n^3$ is prime, $q$ and $m=3nk$ are bounded by a polynomial in $n$, and $\alpha q\ge n^3/(200n^2k^2\log n)>2\sqrt n$ for sufficiently large $n$, and it gives $\gamma=\tilde O(n/\alpha)=\tilde O(n^3)$.
\end{proof}

\subsection{Refutations}\label{sec:parremarks}

We conclude with a remark on \emph{refutations}, that is, sets of CG cuts for a polyhedron with empty intersection.

\begin{remark}\label{rem:refutation}
By Proposition~\ref{cor:promise}, thin parallelepipeds and their rational subpolyhedra that contain no integral vector have empty Chv\'atal closure.
Since the Chv\'atal closure is the intersection of finitely many CG cuts \cite{Schrijver80}, Helly's theorem then gives them refutations with at most $n+1$ CG cuts.
Given a short dual basis, refutations can be computed in polynomial time, but both refutations and short dual bases are hard to find.

We first show that a short dual basis gives a refutation.
Let $Q(B,t,r)$ be a thin parallelepiped, let $L$ be the lattice with basis $B$, let $P\subseteq Q(B,t,r)$ be a rational polyhedron without integral vectors, and let $w_1,\dots,w_n$ be the basis of $L^*$ given by Lemma~\ref{lem:dualbasis}.
If $P=\emptyset$, then the CG cut $\0\T x\le-1$ is a refutation, so we may assume $P\neq\emptyset$.
As in the proof of Proposition~\ref{cor:promise}, $P$ has width less than one along each $u_i=B\T w_i$, so the CG cuts for $P$ along $u_i$ and $-u_i$ either have empty intersection or together give an equation $u_i\T x=\gamma_i$ with $\gamma_i\in\Z$.
If they give an equation for every $i$, the equations define a single integral vector $z$, since $u_1\T,\dots,u_n\T$ are the rows of a unimodular matrix, and $z\notin P$.
An inequality defining $P$ that $z$ violates, scaled to have integral coefficients and with its right-hand side rounded down, is a CG cut violated by $z$, and together with the $n$ equations it gives a contradiction.
Hence $P$ has a refutation with at most $2n+1$ CG cuts.
Given $w_1,\dots,w_n$, these cuts can be computed in polynomial time.
For $P=Q(B,t,r)$, the cuts along $u_i$ are $u_i\T x\le\fl{w_i\T t+r\|w_i\|_1}$ and $u_i\T x\ge-\fl{-w_i\T t+r\|w_i\|_1}$, as in Babai's round-off algorithm \cite{Babai86}.

We now show that refutations and short dual bases are hard to find.
Given an algorithm that finds a refutation whenever one exists, we decide integer feasibility for thin parallelepipeds by running it and checking by linear programming whether its output is a refutation.
So, by Theorem~\ref{thm:bdd}, finding any refutation is as hard as BDD$_{1/(4n^2)}$.
The same holds for finding $w_1,\dots,w_n$, or any unimodular matrix along whose rows $Q(B,t,r)$ has width less than one, since such a matrix gives, by linear programming, the CG cuts of the proof of Lemma~\ref{lem:slab}, and thus decides whether $Q(B,t,r)$ contains an integral vector.
Similarly, since the parallelepipeds of Theorem~\ref{cor:lwe} are thin with high probability, finding any refutation, or such a matrix, for them is as hard as decision-LWE.
\end{remark}

\section{SAT polytopes of Chv\'atal rank at most one}\label{sec:sat}

In this section we construct SAT polytopes defined by lattices, on which the proofs of Theorems~C and~D rest, and we show that they satisfy the promise when the lattice has no short nonzero vectors (Proposition~\ref{prop:satlattice}).

A \emph{literal} on variables $x_1,\dots,x_N$ is an affine function $x_i$ or $\bar x_i=1-x_i$.
For a literal $\ell$, we write $\bar\ell=1-\ell$, which is again a literal.
A \emph{formula} $F$ in conjunctive normal form is a finite family of \emph{clauses}, each of which is a nonempty set of literals.
The \emph{SAT polytope} of $F$ is
\[
P(F)=\Big\{x\in[0,1]^N:\ \sum_{\ell\in T}\ell(x)\ge1\text{ for every }T\in F\Big\}.
\]
Its integral vectors are the \emph{satisfying assignments} of $F$.
Without the promise $P(F)'=P(F)_I$, satisfiability is NP-complete for formulas whose clauses have at most three literals \cite{Cook71}.

The proofs of Theorems~C and~D have the same structure as those of Theorems~A and~B.
The parallelepipeds $Q(B,t,r)$ are replaced by the formulas of Proposition~\ref{prop:satlattice} below, which also takes the place of Proposition~\ref{cor:promise}.
Unlike Proposition~\ref{cor:promise}, Proposition~\ref{prop:satlattice} does not follow directly from Lemma~\ref{lem:slab}, and we prove it in four parts.
In Section~\ref{sec:exactone}, we show that the Chv\'atal closure of our SAT polytopes lies in a face defined by \emph{exact-one equations}, which set a sum of literals equal to one.
In Section~\ref{sec:circuits}, we model arithmetic on binary numbers by such equations, through extended formulations, and we show that its identities hold on the whole face.
These identities give the integral linear functions along which we take CG cuts.
In Section~\ref{sec:residue}, we encode each entry of the error by its residues modulo small pairwise coprime integers, and binary arithmetic recovers the error and checks that the corresponding vector lies in the lattice.
In Section~\ref{sec:thinformulas}, we show that, for each residue, a vector of the dual lattice close to a unit vector divided by the modulus gives integral linear functions of width less than one.
As in the proof of Lemma~\ref{lem:slab}, CG cuts along them fix all residues, and hence leave at most one integral vector (Proposition~\ref{thm:crt}).

Residues are needed because the errors can be exponentially large: our formulas need an integral lattice, and making a general rational lattice integral can create such errors (Lemma~\ref{lem:integralize}).
Such errors cannot be written in unary with polynomially many $0/1$ variables.
In binary, the digits of the errors depend on residues modulo powers of two as large as the errors themselves, which our CG cuts cannot fix.
Residues modulo small integers represent the errors compactly, and one round of CG cuts can fix these residues.
The price is a factor equal to the largest modulus in the condition on $\lambda_1$ in Proposition~\ref{prop:satlattice}(b).

The lattices in our formulas are given by congruences.
For integers $D\ge1$, $m\ge1$, and $m'\ge1$, and for $H\in\Z^{m'\times m}$, let
\[
\Lambda_D^\perp(H)=\{y\in\Z^m: Hy\equiv\0\pmod D\}.
\]
This is a $D$-ary lattice, that is, $D\Z^m\subseteq\Lambda_D^\perp(H)\subseteq\Z^m$.
For $h\ge1$, let $\mathcal B_h=\{-(2^{h-1}-1),\dots,2^{h-1}\}^m$, a box with $2^h$ integers in each coordinate.

\begin{proposition}\label{prop:satlattice}
Let $D\ge2$, $m\ge1$, and $m'\ge1$ be integers, let $H\in\{0,\dots,D-1\}^{m'\times m}$, let $M=\Lambda_D^\perp(H)$, and let $t\in\Z^m$.
Let $h\ge1$, let $p=(p_1,\dots,p_s)$, where $p_1,\dots,p_s\ge2$ are pairwise coprime integers with $p_1\cdots p_s\ge2^h$, and let $p_{\max}=\max_jp_j$.
In time polynomial in $m$, $m'$, $\sum_jp_j$, $\log D$, and the encoding size of $t$, we can compute a formula $F=F(p,h,H,D,t)$ whose clauses have at most three literals, with the following properties.

\textup{(a)} The satisfying assignments of $F$ correspond one-to-one to the vectors $\tilde y\in M$ with $t-\tilde y\in\mathcal B_h$.

\textup{(b)} If $\lambda_1(M)>m^{3/2}p_{\max}(2^h-1)/2$, then every rational subpolyhedron of $P(F)$ has Chv\'atal rank at most one, and $F$ has at most one satisfying assignment.

\textup{(c)} $F$ has $O\big(m(\sum_jp_j)\log(p_1\cdots p_s)+m'mh\log D\big)$ variables.
\end{proposition}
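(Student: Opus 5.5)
I would build $F$ from three layers of $0/1$ variables, glued by exact-one equations.

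\emph{Residue layer.} For each coordinate $i\le m$ and each modulus $p_j$ there is a group of $p_j$ variables $x_{i,j,a}$, $a\in\{0,\dots,p_j-1\}$. These carry the exact-one equation $\sum_a x_{i,j,a}=1$, which encodes the residue of the error entry $e_i=t_i-\tilde y_i$ modulo $p_j$. The exact-one equation becomes clauses with at most three literals by splitting the group with auxiliary variables: a chain of ``at most one'' 2-clauses together with a ``at least one'' clause split into 3-clauses.

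\emph{Arithmetic layer.} This layer has binary arithmetic gadgets (full adders, multiplication by constants, reduction) on additional $0/1$ variables. First it reconstructs $e_i\in\{-(2^{h-1}-1),\dots,2^{h-1}\}$ in binary from its residues via the Chinese remainder formula $e_i\equiv\sum_j c_j r_{ij}\pmod{p_1\cdots p_s}$. It then shifts to the box range. Finally it checks $H(t-e)\equiv\0\pmod D$ by computing $He$ in binary modulo $D$ and comparing with the constant $Ht\bmod D$. Each gadget has a constant number of inputs and outputs. I encode it by the standard extended formulation of the convex hull of its $0/1$ solution table, with one selector variable per row of the table, an exact-one equation over the selectors, and implications (2-clauses) linking selectors to input and output bits. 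The variable count in (c) follows. There are $m\sum_jp_j$ residue variables, and the CRT sum of the values $c_jr_{ij}$, each of $O(\log\prod p_j)$ bits, is computed with $O(\sum_jp_j\log\prod p_j)$ gadgets per coordinate. Checking $H$ uses $m'm$ products of $O(h)$ bit numbers by $O(\log D)$ bit constants. Part (a) is then a direct verification. A satisfying assignment determines residues, hence a unique $e$ in the box with $t-e\in M$, and conversely every such $e$ determines all gadget variables uniquely. Ranges must be checked so that residue tuples outside the CRT box are rejected, for instance by a comparison gadget.

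For (b), I first show that $P(F)'$ lies in the face where every exact-one equation holds. Each pair of literals in an exact-one group gives a 2-clause $\bar\ell+\bar\ell'\ge1$, and summing the clauses suitably with the bounds gives an inequality whose right-hand side is an integer plus $\tfrac12$, so one CG cut forces $\sum\ell\le1$ on $P(F)$. This is the step I would verify most carefully. On that face, each gadget's selector variables lie in the simplex of its extended formulation. Every output bit is therefore the same affine function of the selectors as on integral points, so the binary identities (CRT reconstruction and $He\equiv Ht$ up to explicit integer carry terms) hold as linear identities on the whole face. This gives an integral linear form $\phi_i(x)$ equal to $e_i$ plus integral combinations, valid on the face.

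Next comes the width argument. For fixed $i,j$, the vector $e_i'=(\prod_k p_k/p_j)\cdot(\text{CRT weight})$ has to be written with a dual-lattice vector. Since $M\supseteq D\Z^m$, we have $M^*\subseteq\tfrac1D\Z^m$. By Banaszczyk and Lemma~\ref{lem:dualbasis}, there is $w\in M^*$ within $\ell_1$ distance about $m^{3/2}/\lambda_1(M)$ of $e_i/p_j$; this uses a covering-radius bound $\mu(M^*)\le\sqrt m/(2\lambda_1(M))\cdot m$ or similar. Consider the function $g(x)=p_jw\T(t-e(x))$, corrected by $w\T\tilde y\in\Z$ expressed through the congruence check and by integer multiples of residue indicators. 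It is affine with integral coefficients, and it equals the residue selector value $\sum_a a x_{i,j,a}/p_j$ plus an integer combination. Its variation over the face is at most $p_j\|w-e_i/p_j\|_1(2^h-1)<1$ by the hypothesis $\lambda_1(M)>m^{3/2}p_{\max}(2^h-1)/2$. So, as in Lemma~\ref{lem:slab}, one pair of CG cuts fixes each residue, and then all other variables are fixed by the gadgets. Hence $P(F)'$ has at most one point, which is integral if nonempty. The same holds for rational subpolyhedra, since widths only shrink and the exact-one cuts remain valid. The main obstacle is exactly this step: showing that the needed dual vector yields an integral direction on the extended variables with the right width. I would first try to express the correction $w\T\tilde y$ through the binary quotient variables of the mod-$D$ check, so that it is integral.
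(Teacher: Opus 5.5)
Your architecture matches the paper's: residue groups, CRT and mod-$D$ arithmetic on extended formulations, a face reached in one round, and dual vectors near $e_i/p_j$. However, the step that makes the closure integral fails as written.

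One integral direction of width less than one for each pair $(i,j)$, which is your ``one pair of CG cuts fixes each residue'', only confines $\sum_a(a/p_j)x_{i,j,a}$ to an interval of length less than $1/p_j$. It does not make $(x_{i,j,a})_a$ a unit vector. For $p_j=3$, the selector vectors $(0,1,0)$ and $(\tfrac12,0,\tfrac12)$ both give $\tfrac13$, so you cannot conclude that the free variables are integral and let the gadgets propagate integrality.

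The missing idea is to use $p_j+1$ directions for each $(i,j)$. Let $w'=w-e_i/p_j$, and let $c(x)$ be the shifted error computed by your arithmetic layer, which lies in $[0,2^h-1]^m$ on the face. For $l=0,\dots,p_j$, let $f_l(a)=a/p_j$ for $a<l$ and $f_l(a)=a/p_j-1$ for $a\ge l$, and let $\psi_l(x)=\sum_af_l(a)x_{i,j,a}+w'^{\mathsf T}c(x)$. Three properties then hold:
\begin{itemize}
\item on the face, $\psi_l$ equals an integral linear function $\psi'_l$ plus a constant independent of $l$, since $f_l(a)-a/p_j\in\{0,-1\}$;
\item the values of $f_l$ span an interval of length at most $1-1/p_j$, so the width is at most $(1-1/p_j)+\|w'\|_1(2^h-1)<1$;
\item $\psi'_{l+1}-\psi'_l=x_{i,j,l}$, so fixing every $\psi'_l$ fixes every selector to $0$ or $1$.
\end{itemize}
Your inequality $p_j\|w'\|_1(2^h-1)<1$ is the right condition, but your accounting drops the contribution $1-1/p_j$ of the selectors. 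The condition also needs $\|w'\|_1\le m^{3/2}/(2\lambda_1(M))$ with the factor $\tfrac12$. That factor comes from Banaszczyk's covering-radius bound $\operatorname{dist}(y,M^*)\le m/(2\lambda_1(M))$ for every $y$, not from a short dual basis. The function is $w\T c(x)$ minus an integral correction, not $p_j$ times it. Your plan for the correction works through $DM^*=H\T\Z^{m'}+D\Z^m$: writing $Dw=H\T z+Dz'$ turns $w\T c(x)$ into a constant plus an integral linear function of the bits of $c$ and the quotient of the mod-$D$ check.

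The reduction to the face has three further gaps.

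First, implication clauses between selectors and bits do not imply the coordinate equations $x_u=\sum_{v:u(v)=1}x_{g,v}$ of the extended formulation. They give only $\max_{v:u(v)=1}x_{g,v}\le x_u\le1-\max_{v:u(v)=0}x_{g,v}$. If the eight selectors of a three-input gadget all equal $\tfrac18$, an input bit is only confined to $[\tfrac18,\tfrac78]$ instead of being $\tfrac12$. Then the CRT and mod-$D$ identities fail at fractional points. Each coordinate equation must itself be an exact-one equation, $\bar x_u+\sum_{v:u(v)=1}x_{g,v}=1$.

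Second, summing pairwise clauses gives a right-hand side of the form integer plus $\tfrac12$ only for three literals, from $2(\ell_1+\ell_2+\ell_3)\le3$. For $\tau\ge4$ literals, no combination of the pairwise clauses and the bounds gives a right-hand side below $\tau/2\ge2$. The reason is that the all-$\tfrac12$ point satisfies all of them with $\sum\ell=\tau/2$. So the groups of size $p_j$ and the long selector equations must first be split into exact-one triples $\{\ell_1,\ell_2,x'_1\},\{\bar x'_1,\ell_3,x'_2\},\dots$, each encoded by its $3$-clause and its three $2$-clauses. On the face of these triples, the auxiliaries are the integral affine functions $x'_j=1-\ell_1-\dots-\ell_{j+1}$ of the original variables, so integrality propagates to them. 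A sequential at-most-one encoding with a separately split at-least-one clause is not shown to have this property.

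Third, you bound widths on the face $S$, but the cuts must be CG cuts for $R$, whose widths along these directions are not controlled. To stay within one round you need the fact that the Chv\'atal closure of the face $R\cap S$ is $R'\cap S$. Together with $R'\subseteq S$, this gives $R'=(R\cap S)'$.
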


We call the formula $F(p,h,H,D,t)$ \emph{thin} if $\lambda_1(\Lambda_D^\perp(H))>m^{3/2}p_{\max}(2^h-1)/2$, that is, if it satisfies the assumption of Proposition~\ref{prop:satlattice}(b).
Sections~\ref{sec:exactone}--\ref{sec:thinformulas} prove Proposition~\ref{prop:satlattice}, and Theorems~\ref{thm:satbdd} and~\ref{thm:satlwe} use only Proposition~\ref{prop:satlattice} from this section.

\subsection{Exact-one systems}\label{sec:exactone}

In this subsection we show that SAT polytopes can express systems of equations that set sums of literals equal to one, and that this preserves the promise (Lemma~\ref{lem:exactone}).
We first show that adding variables that are integral affine functions of the others commutes with taking integer hulls and Chv\'atal closures.

\begin{lemma}\label{lem:facelift}
Let $P\subseteq\R^N$ be a rational polyhedron, let $E\in\Z^{N'\times N}$, let $f\in\Z^{N'}$, and let $\varphi(x)=(x,Ex+f)$.
Then $\varphi$ maps $P\cap\Z^N$ bijectively onto $\varphi(P)\cap\Z^{N+N'}$, $\varphi(P)_I=\varphi(P_I)$, and $\varphi(P)'=\varphi(P')$.
\end{lemma}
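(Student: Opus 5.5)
The plan is to exploit that $\varphi$ is the restriction of an affine map that is "unimodular" in the appropriate sense. It has a left inverse $\pi(x,z)=x$, the projection onto the first $N$ coordinates. Moreover, the linear map $\varphi_0(x)=(x,Ex)$ sends $\Z^N$ into $\Z^{N+N'}$, because $E$ and $f$ are integral. I will treat the three claims in turn.

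\emph{Bijection on integral points.} If $x\in P\cap\Z^N$, then $\varphi(x)=(x,Ex+f)$ is integral and lies in $\varphi(P)$. Conversely, if $(x,z)\in\varphi(P)\cap\Z^{N+N'}$, then $x\in P$, $x$ is integral, and $z=Ex+f$, so the point is $\varphi(x)$. Injectivity follows from $\pi\circ\varphi=\mathrm{id}$.

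\emph{Integer hulls.} Since $\varphi$ is affine, it commutes with taking convex hulls. Combined with the bijection just established, this gives
\[
\varphi(P)_I=\conv\varphi(P\cap\Z^N)=\varphi(\conv(P\cap\Z^N))=\varphi(P_I).
\]
Note also that $\varphi(P)=\{(x,z):x\in P,\ z=Ex+f\}$ is a rational polyhedron.

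\emph{Chv\'atal closures.} This is the main step; the idea is to establish a correspondence between the CG cuts of the two polyhedra. Let $(c,d)\in\Z^N\times\Z^{N'}$ and $y=(x,z)$. On the affine hull of $\varphi(\R^N)$ we have
\[
c\T x+d\T z=(c+E\T d)\T x+d\T f,
\]
where $c+E\T d\in\Z^N$ and $d\T f\in\Z$. So if $c\T x+d\T z\le\delta$ is valid for $\varphi(P)$, then $(c+E\T d)\T x\le\delta-d\T f$ is valid for $P$. Its CG cut $(c+E\T d)\T x\le\fl{\delta}-d\T f$ uses the identity $\fl{\delta-d\T f}=\fl{\delta}-d\T f$, which holds because $d\T f$ is an integer. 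This cut is exactly the pullback along $\varphi$ of the CG cut $c\T x+d\T z\le\fl{\delta}$. Hence every point of $\varphi(P')$ satisfies every CG cut of $\varphi(P)$, which gives $\varphi(P')\subseteq\varphi(P)'$.

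Conversely, every CG cut $c\T x\le\fl{\delta}$ of $P$ is the pullback of the CG cut $(c,0)\T y\le\fl{\delta}$ of $\varphi(P)$. So if $(x,z)\in\varphi(P)'$, then $x$ satisfies all CG cuts of $P$, that is, $x\in P'$.

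It remains to check that $\varphi(P)'\subseteq\varphi(\R^N)$, so that $(x,z)=\varphi(x)$. This holds because the equations $z=Ex+f$ are valid for $\varphi(P)$, have integral coefficients, and have integral right-hand sides. Each of the two inequalities making up such an equation is therefore its own CG cut, so these equations survive in $\varphi(P)'$. Together with $x\in P'$, this gives $\varphi(P)'\subseteq\varphi(P')$.

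The only delicate point is the rounding identity $\fl{\delta-d\T f}=\fl{\delta}-d\T f$ together with the integrality of $c+E\T d$. Both rely on $E$ and $f$ being integral, and that is exactly where the hypothesis is used.
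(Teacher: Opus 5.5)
Your proof is correct and follows essentially the same route as the paper's. Both arguments transfer CG cuts through the identity $a_1\T x+a_2\T(Ex+f)=(a_1+E\T a_2)\T x+a_2\T f$ with $a_2\T f\in\Z$, and use $a_2=\0$ to see that every CG cut of $P$ arises. The only cosmetic difference is how you show $\varphi(P)'\subseteq\varphi(\R^N)$: the paper gets it from the general fact $\varphi(P)'\subseteq\varphi(P)$ for rational polyhedra, whereas you get it directly from the integral equations $z=Ex+f$ being their own CG cuts.
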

\begin{proof}
We may assume $P\neq\emptyset$.
The map $\varphi$ is injective, and a vector $\varphi(x)$ is integral if and only if $x$ is integral.
Hence $\varphi$ maps $P\cap\Z^N$ bijectively onto $\varphi(P)\cap\Z^{N+N'}$, and so $\varphi(P)_I=\varphi(P_I)$.

We now show that $\varphi(P)'=\varphi(P')$.
Since $\varphi(P)$ is a rational polyhedron, $\varphi(P)'\subseteq\varphi(P)$, so every vector of $\varphi(P)'$ is $\varphi(x)$ for some $x\in P$.
Let $a=(a_1,a_2)\in\Z^{N+N'}$ and $c=a_1+E\T a_2\in\Z^N$.
Then $a\T\varphi(x)=c\T x+a_2\T f$ and $\delta_{\varphi(P)}(a)=\delta_P(c)+a_2\T f$, where $a_2\T f$ is an integer.
Hence the CG cut for $\varphi(P)$ along $a$ holds at $\varphi(x)$ if and only if the CG cut for $P$ along $c$ holds at $x$.
Since every $c\in\Z^N$ arises in this way, for instance with $a_2=\0$, the vector $\varphi(x)$ lies in $\varphi(P)'$ if and only if $x$ lies in $P'$.
\end{proof}

For a family $\mathcal T$ of nonempty sets of literals on variables $x_1,\dots,x_N$, let
\[
X(\mathcal T)=\Big\{x\in[0,1]^N:\ \sum_{\ell\in T}\ell(x)=1 \text{ for every } T\in\mathcal T\Big\}.
\]
We call the equations of $X(\mathcal T)$ \emph{exact-one equations}, and $X(\mathcal T)$ an \emph{exact-one system}.

\begin{lemma}\label{lem:exactone}
Let $\mathcal T$ be a family of nonempty sets of literals on variables $x_1,\dots,x_N$.
In time polynomial in $N+\sum_{T\in\mathcal T}|T|$, we can compute a formula $F$ with at most $N+\sum_{T\in\mathcal T}|T|$ variables, whose clauses have at most three literals, and whose satisfying assignments correspond one-to-one to the integral vectors of $X(\mathcal T)$.
If every rational subpolyhedron of $X(\mathcal T)$ has Chv\'atal rank at most one, then so does every rational subpolyhedron of $P(F)$.
\end{lemma}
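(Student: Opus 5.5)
We encode each exact-one equation $\sum_{\ell\in T}\ell(x)=1$ by clauses of at most three literals, using fresh variables that are integral affine functions of the original ones on the exact-one system, so that Lemma~\ref{lem:facelift} transports the integer hull and the Chv\'atal closure.

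For $T=\{\ell_1,\dots,\ell_k\}$ we introduce prefix variables $z_1,\dots,z_k$, intended to equal $\ell_1+\dots+\ell_j$. We impose $z_j=z_{j-1}+\ell_j$ with $z_0=0$, and $z_k=1$. The relation $z_j=z_{j-1}+\ell_j$, with all three quantities in $\{0,1\}$, says that exactly one of the literals $\bar z_j,z_{j-1},\ell_j$ equals one. This is itself an exact-one equation on three literals, namely $(1-z_j)+z_{j-1}+\ell_j=1$. Over $0/1$ values, exact-one on three literals $a,b,c$ is expressed by the clauses $\{a,b,c\}$, $\{\bar a,\bar b\}$, $\{\bar a,\bar c\}$, $\{\bar b,\bar c\}$.

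Let $\mathcal T'$ be the resulting family of triples together with the singletons $\{z_k\}$. Write $\varphi(x)=(x,z(x))$ with $z_j(x)=\sum_{i\le j}\ell_i(x)$, which is an integral affine map. The system $X(\mathcal T)$ then maps under $\varphi$ onto $X(\mathcal T')$ intersected with the box constraints. The new variables lie in $[0,1]$ on $X(\mathcal T)$, since they are partial sums of nonnegative terms summing to one, and the box is automatic. Conversely, every point of $X(\mathcal T')$ is of the form $\varphi(x)$ with $x\in X(\mathcal T)$, since the equations determine the $z_j$. Counting variables gives at most $N+\sum_T|T|$, and the integral points correspond bijectively by Lemma~\ref{lem:facelift}.

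The main obstacle is that $P(F)$ is larger than $X(\mathcal T')$: the clauses give only inequalities. We must show that a rational subpolyhedron $R\subseteq P(F)$ has rank at most one. I would first show that one round of CG cuts forces every exact-one equation. The pairwise clauses together with the three-literal clause give
\[
2(a+b+c)=(a+b)+(a+c)+(b+c)\le 3,
\]
so $a+b+c\le 3/2$ holds on $P(F)$. Its CG cut is $a+b+c\le 1$, and together with $a+b+c\ge 1$ this gives the equation, so $P(F)'\subseteq X(\mathcal T')$. To get rank one for $R$, let $R_0=R\cap X(\mathcal T')$, which is a rational subpolyhedron of $X(\mathcal T')$. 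Then $\varphi^{-1}(R_0)$ is a rational subpolyhedron of $X(\mathcal T)$, so by hypothesis it has rank at most one, and by Lemma~\ref{lem:facelift} so does $R_0$.

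The remaining delicate point is combining the two: we need $R'\subseteq R_0'$ in one round. Each CG cut for $R_0$ along $c$ with bound $\delta_{R_0}(c)$ must be implied by CG cuts for $R$ together with the exact-one equations, which are themselves CG cuts for $R$. I would handle this by working face-by-face. $R_0$ is the face of $R$ cut out by the valid inequalities $a+b+c\le 3/2$ rounded, so it is not literally a face. I would therefore instead use Lemma~\ref{lem:facelift} with the extended formulation to show that $\delta_{R_0}(c)$ is attained as $\delta_R(c+\sum\mu_T\cdot(\text{equation normals}))$ plus integer multiples with $\mu$ integral. If this step fails, I would fall back to showing directly, via the unimodular-direction argument of Lemma~\ref{lem:slab}, only what later sections need.
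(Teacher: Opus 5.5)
Your encoding by prefix variables $z_j=\sum_{i\le j}\ell_i$ is fine. Like the paper's chain, which adds only $\tau-3$ variables per set with $\tau\ge4$ literals, it is an integral affine map, so Lemma~\ref{lem:facelift} applies, and it meets the variable bound. (For $j=1$ the set is $\{\bar z_1,\ell_1\}$, with two literals, encoded by the clauses $\{\bar z_1,\ell_1\}$ and $\{z_1,\bar\ell_1\}$.) Your derivation of the CG cut $a+b+c\le1$ is the paper's, and it gives $R'\subseteq R_0$.

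\textbf{The gap.} The last step, which carries the whole second statement, is never proved. You do not establish $R'\subseteq R_0'$; you give only a hedged plan, and the fallback to proving ``only what later sections need'' does not prove the lemma. Moreover, the obstacle you name does not exist: $R_0$ \emph{is} a face of $R$. Each three-literal clause gives an inequality $a+b+c\ge1$ that is valid for $R\subseteq P(F)$. The equations of the sets with one or two literals already hold on all of $P(F)$. So $R_0$ is the set of points of $R$ where all these valid inequalities are tight, that is, where their sum is tight. The rounded inequality $a+b+c\le1$ is what shows $R'\subseteq R_0$; it is not what defines $R_0$.

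\textbf{How to close it.} The missing ingredient is the standard fact that $G'=R'\cap G$ for every face $G$ of a rational polyhedron $R$ \cite[Section~23.1]{Schrijver86}, which is what the paper uses. It gives $R_0'=R'\cap R_0=R'$. You also need $R_I=(R_0)_I$, which holds because every integral vector of $P(F)$ satisfies all the exact-one equations; you should state this explicitly. Combined with your argument that $R_0'=(R_0)_I$, this yields $R'=R_I$.

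Your own multiplier idea can also be completed directly, and doing so shows where integrality of the multiplier comes from:
\begin{itemize}
\item Let $\alpha\T x\ge\beta$ be the sum of the inequalities $a+b+c\ge1$, written in the variables. Then $\alpha$ is integral, $\beta$ is an integer, and $R_0=\{x\in R:\alpha\T x\le\beta\}$.
\item Take an integral direction $\pi$, and assume $R_0\neq\emptyset$ (otherwise $R'\subseteq R_0=\emptyset$). LP duality gives $\lambda^*\ge0$ with $\delta_R(\pi-\lambda^*\alpha)=\delta_{R_0}(\pi)-\lambda^*\beta$.
\item Because $\alpha\T x\ge\beta$ is valid for $R$, the same identity holds for every $\lambda\ge\lambda^*$, in particular for an integer $\lambda$.
\item The CG cut $(\pi-\lambda\alpha)\T x\le\fl{\delta_{R_0}(\pi)}-\lambda\beta$ for $R$, together with $\alpha\T x=\beta$ on $R'\subseteq R_0$, gives $\pi\T x\le\fl{\delta_{R_0}(\pi)}$ on $R'$.
\end{itemize}
Hence $R'\subseteq R_0'$. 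The validity of the reversed inequality $\alpha\T x\ge\beta$ on $R$ is exactly the point your proposal misses.
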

\begin{proof}
We first construct a family $\mathcal T_3$ of sets of at most three literals, on the variables $x_1,\dots,x_N$ and $N'$ new variables, and a map $\varphi(x)=(x,Ex+f)$ with $E\in\Z^{N'\times N}$ and $f\in\Z^{N'}$, such that $X(\mathcal T_3)=\varphi(X(\mathcal T))$.
Let $T=\{\ell_1,\dots,\ell_\tau\}\in\mathcal T$ with $\tau\ge4$.
We introduce new variables $x'_1,\dots,x'_{\tau-3}$ and replace $T$ by the $\tau-2$ sets
\[
\{\ell_1,\ell_2,x'_1\},\ \{\bar x'_1,\ell_3,x'_2\},\ \dots,\ \{\bar x'_{\tau-4},\ell_{\tau-2},x'_{\tau-3}\},\ \{\bar x'_{\tau-3},\ell_{\tau-1},\ell_\tau\}.
\]
A vector satisfies the new equations if and only if $x'_j=1-\ell_1-\dots-\ell_{j+1}$ for $j=1,\dots,\tau-3$ and $\ell_1+\dots+\ell_\tau=1$.
Moreover, if the $\ell_i$ are nonnegative and sum to one, then these values of $x'_j$ lie in $[0,1]$.
Doing this for every set with more than three literals gives $\mathcal T_3$.
We let $\varphi$ append to $x$ the values of the new variables $x'_j$.
Since each $x'_j$ is an integral affine function of $x$, the map $\varphi$ has this form, and $X(\mathcal T_3)=\varphi(X(\mathcal T))$.

Next we encode $\mathcal T_3$ by a formula and prove the first statement.
Let $F$ consist of the clause $\{\ell_1\}$ for every $\{\ell_1\}\in\mathcal T_3$, the clauses $\{\ell_1,\ell_2\}$ and $\{\bar\ell_1,\bar\ell_2\}$ for every $\{\ell_1,\ell_2\}\in\mathcal T_3$, and the clauses $\{\ell_1,\ell_2,\ell_3\}$, $\{\bar\ell_1,\bar\ell_2\}$, $\{\bar\ell_1,\bar\ell_3\}$, and $\{\bar\ell_2,\bar\ell_3\}$ for every $\{\ell_1,\ell_2,\ell_3\}\in\mathcal T_3$.
Let $S$ be the set of vectors of $P(F)$ that satisfy $\ell_1+\ell_2+\ell_3=1$ for every $\{\ell_1,\ell_2,\ell_3\}\in\mathcal T_3$.
Then $S=X(\mathcal T_3)$.
Indeed, the clauses of a set with one or two literals hold together if and only if its literals sum to one.
For a set $\{\ell_1,\ell_2,\ell_3\}$, if $\ell_1+\ell_2+\ell_3=1$ and the literals are nonnegative, then $\ell_1+\ell_2\le1$, $\ell_1+\ell_3\le1$, and $\ell_2+\ell_3\le1$, which are the inequalities of the other three clauses.
Every integral vector of $P(F)$ lies in $S$, since there the clauses of each set $\{\ell_1,\ell_2,\ell_3\}$ force exactly one of its literals to be one.
So, by Lemma~\ref{lem:facelift}, the satisfying assignments of $F$ are the vectors $\varphi(x)$ with $x$ an integral vector of $X(\mathcal T)$, and so they correspond one-to-one to these vectors.
Since $\mathcal T_3$ has at most $\sum_{T\in\mathcal T}|T|$ sets, each giving at most four clauses, $F$ can be computed in time polynomial in $N+\sum_{T\in\mathcal T}|T|$.
Moreover, $F$ has $N+N'\le N+\sum_{T\in\mathcal T}|T|$ variables, since a set with $\tau\ge4$ literals adds $\tau-3$ new variables.

We now prove the second statement.
Let $R$ be a rational subpolyhedron of $P(F)$, and assume that every rational subpolyhedron of $X(\mathcal T)$ has Chv\'atal rank at most one.
We show that $R'$ lies in the face $R\cap S$ of $R$, and that this face is the image under $\varphi$ of a rational subpolyhedron of $X(\mathcal T)$, to which the assumption applies.
For every $\{\ell_1,\ell_2,\ell_3\}\in\mathcal T_3$, the three inequalities $\ell_1+\ell_2\le1$, $\ell_1+\ell_3\le1$, and $\ell_2+\ell_3\le1$ are valid for $P(F)$, and hence for $R$, and their sum is $2(\ell_1+\ell_2+\ell_3)\le3$.
Since $\ell_1+\ell_2+\ell_3$ is an affine function with integral coefficients and an integral constant term, this gives the CG cut $\ell_1+\ell_2+\ell_3\le1$ for $R$.
Hence $R'\subseteq R\cap S$.
Since $\ell_1+\ell_2+\ell_3\ge1$ is valid for $R$ for every such set, the set $R\cap S$ is a face of $R$.
The Chv\'atal closure of a face of a rational polyhedron is the intersection of the face with the Chv\'atal closure of the polyhedron \cite[Section~23.1]{Schrijver86}, so $(R\cap S)'=R'\cap S=R'$.
Every integral vector of $R$ lies in $S$, so $R_I=(R\cap S)_I$.
Since $R\cap S\subseteq X(\mathcal T_3)=\varphi(X(\mathcal T))$, we have $R\cap S=\varphi(\tilde R)$, where $\tilde R=\{x:\varphi(x)\in R\cap S\}$ is a rational subpolyhedron of $X(\mathcal T)$.
By Lemma~\ref{lem:facelift} and our assumption, $(R\cap S)'=\varphi(\tilde R')=\varphi(\tilde R_I)=(R\cap S)_I$.
Hence $R'=R_I$.
\end{proof}

\subsection{Binary arithmetic}\label{sec:circuits}

We now model arithmetic on binary numbers by exact-one systems.
We introduce constraints that set some $0/1$ variables equal to a given function of earlier ones, and we write each of them with the standard extended formulation of the convex hull of its solutions.
With them, we compute weighted sums modulo an integer in binary (Lemma~\ref{lem:arith}), which Section~\ref{sec:residue} uses to recover the entries from their residues and to check that a vector lies in the lattice.

We define \emph{functional formulations}, the systems built from these constraints.
We write $x_u$ for the variable with index $u$, and we also call it the variable $u$.
A functional formulation $X$ starts from \emph{free} variables, with \emph{multiple-choice constraints} $\sum_{u\in I}x_u=1$ for some disjoint sets $I$ of free variables, and adds a finite sequence of functional constraints and fixings, as follows.
A \emph{functional constraint} $g$ introduces new variables, its \emph{outputs}, as a function of at most three earlier variables, its \emph{inputs}, that is, of free variables or variables introduced by earlier functional constraints.
Writing $I_g$ and $O_g$ for the sequences of its inputs and outputs, and $x_{I_g}$ and $x_{O_g}$ for the corresponding vectors of variables, it requires $x_{O_g}=f_g(x_{I_g})$ for a given function $f_g:\{0,1\}^{|I_g|}\to\{0,1\}^{|O_g|}$.
Its \emph{solutions} are the $2^{|I_g|}$ vectors $(v,f_g(v))$ with $v\in\{0,1\}^{|I_g|}$.
We write it with the standard extended formulation of the convex hull of its solutions, that is, with new variables $x_{g,v}$ for $v\in\{0,1\}^{|I_g|}$ and the constraints
\begin{equation}\label{eq:extform}
x_{g,v}\ge0,\qquad\sum_vx_{g,v}=1,\qquad(x_{I_g},x_{O_g})=\sum_vx_{g,v}\,(v,f_g(v)).
\end{equation}
For instance, for a functional constraint $g$ with inputs $u,w$, output $o$, and $f_g(v_1,v_2)=v_1v_2$, the solutions are $(0,0,0)$, $(0,1,0)$, $(1,0,0)$, and $(1,1,1)$, and the last equation in \eqref{eq:extform} reads $x_u=x_{g,(1,0)}+x_{g,(1,1)}$, $x_w=x_{g,(0,1)}+x_{g,(1,1)}$, and $x_o=x_{g,(1,1)}$.
Finally, some outputs are \emph{fixed} to $0$ or $1$.
The variables of $X$ are its free variables, the outputs, and the variables $x_{g,v}$.
The functional formulation $X$ is the system
\[
\begin{aligned}
&\textstyle\sum_{u\in I}x_u=1&&\text{for every set }I\text{ of a multiple-choice constraint},\\
&\text{\eqref{eq:extform}}&&\text{for every functional constraint }g,\\
&x_u=\epsilon_u&&\text{for every output }u\text{ fixed to }\epsilon_u\in\{0,1\},\\
&0\le x_u\le1&&\text{for every free variable }u,
\end{aligned}
\]
and we also denote by $X$ the polyhedron it defines.
All these equations are exact-one equations in the literals $x_u$ and $\bar x_u=1-x_u$.
Indeed, each entry $x_u$ of $(x_{I_g},x_{O_g})$ gives, in the last equation in \eqref{eq:extform}, an equation $x_u=\sum_{v:u(v)=1}x_{g,v}$, which we write as $\bar x_u+\sum_{v:u(v)=1}x_{g,v}=1$, where $u(v)$ is the corresponding entry of $(v,f_g(v))$.
We write fixing $u$ to $1$ or $0$ as $x_u=1$ or $\bar x_u=1$, respectively.
Thus, since $0\le x_u\le1$ for every variable $u$ and every $x\in X$, the functional formulation $X$ is an exact-one system, that is, $X=X(\mathcal T)$ as in Section~\ref{sec:exactone}, where $\mathcal T$ is the family of the sets of literals of these equations.
For $x\in X$, we denote by $b(x)$ the vector of its free variables.
An \emph{integral linear function} of the variables of $X$ is a linear combination of these variables with integer coefficients.
For a polyhedron $P$ and an affine function $\psi$, we write $\delta_P(\psi)=\sup\{\psi(x):x\in P\}$ and $\width_P(\psi)=\delta_P(\psi)+\delta_P(-\psi)$.

The next lemma gives the two properties of functional formulations that we use.
Part (a) is the reason for using extended formulations: for a general formulation of the same integral vectors, such as one given by clauses, an affine identity that holds at the solutions of a functional constraint can fail at fractional vectors.

\begin{lemma}\label{lem:circuit}
Let $X$ be a functional formulation.

\textup{(a)} Let $g$ be a functional constraint of $X$, and let $\eta$ be an affine function that vanishes at the solutions $(v,f_g(v))$ of $g$. Then $\eta(x_{I_g},x_{O_g})=0$ for every $x\in X$.

\textup{(b)} Let $x\in X$ be such that $b(x)$ is integral. Then $x$ is integral, and it is the only vector of $X$ with free variables $b(x)$.
\end{lemma}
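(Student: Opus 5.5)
For part~(a), I will use only the equations in~\eqref{eq:extform} for $g$. Let $x\in X$. By the last equation in~\eqref{eq:extform}, $(x_{I_g},x_{O_g})=\sum_vx_{g,v}(v,f_g(v))$, and $\sum_vx_{g,v}=1$. So $(x_{I_g},x_{O_g})$ is an affine combination of the solutions of $g$, in fact a convex one. An affine function commutes with affine combinations, so
\[
\eta(x_{I_g},x_{O_g})=\sum_vx_{g,v}\,\eta(v,f_g(v))=0 .
\]
The constraints $x_{g,v}\ge0$ and the other constraints of $X$ play no role here. This is why the extended formulation is used rather than clauses.

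For part~(b), I will argue by induction along the sequence of functional constraints. The inductive claim is that, for each $g$ in turn, the inputs $x_{I_g}$ are integral and determined by $b(x)$, and then so are the outputs $x_{O_g}$ and the variables $x_{g,v}$. Inputs of $g$ are free variables or outputs of earlier constraints, so by induction $x_{I_g}$ equals some $v^*\in\{0,1\}^{|I_g|}$ determined by $b(x)$.

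The key step is to show that $x_{g,v}=0$ for $v\ne v^*$. I take any input coordinate $u$ of $g$. If $v^*_u=1$, the equation $x_u=\sum_{v:v_u=1}x_{g,v}=1$ together with $\sum_vx_{g,v}=1$ and nonnegativity forces $x_{g,v}=0$ for all $v$ with $v_u=0$. If $v^*_u=0$, then $\sum_{v:v_u=1}x_{g,v}=0$ and nonnegativity force $x_{g,v}=0$ for all $v$ with $v_u=1$. Combining these over all input coordinates leaves only $x_{g,v^*}=1$. Then the outputs satisfy $x_{O_g}=f_g(v^*)$, which is integral and determined.

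After processing all functional constraints, every variable of $X$ is integral and is a function of $b(x)$. Hence $x$ is integral and is the unique vector of $X$ with free variables $b(x)$. The multiple-choice constraints and the fixings only restrict which $b(x)$ occur, so they do not affect uniqueness.

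The only mild obstacle is keeping the induction order clean. Outputs may be inputs only of later constraints, and the $x_{g,v}$ are tied to $g$ alone, so processing constraints in their given order makes each step well defined. I expect no real difficulty beyond this.
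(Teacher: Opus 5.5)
Your proof is correct and follows the paper's argument. Part~(a) is the same convex (indeed affine) combination argument. In part~(b), you go through the functional constraints in order exactly as the paper does, and your coordinate-by-coordinate elimination of the $x_{g,v}$ with $v\ne v^*$ spells out the paper's observation that $v^*=\sum_v x_{g,v}v$ is a vertex of $[0,1]^{|I_g|}$.
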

\begin{proof}
(a)
For every $x\in X$, the vector $(x_{I_g},x_{O_g})$ is a convex combination of the solutions of $g$, and $\eta$ vanishes at them, and hence at this convex combination, since $\eta$ is affine.

(b)
Going through the functional constraints in order, suppose that the inputs of $g$ take $0/1$ values $v^*$ in $x$, which holds for free inputs by assumption and for the other inputs by induction.
Then $v^*=\sum_vx_{g,v}v$, and since $v^*$ is a vertex of $[0,1]^{|I_g|}$, we obtain $x_{g,v^*}=1$ and $x_{g,v}=0$ for $v\ne v^*$, and so $x_{O_g}=f_g(v^*)$.
Hence all variables of $x$ are integral and determined by $b(x)$.
\end{proof}

The next lemma computes in binary the residue of a weighted sum of variables modulo an integer, and writes the quotient as an integral linear function.

\begin{lemma}\label{lem:arith}
Let $X_0$ be a functional formulation, let $k\ge1$, let $u_1,\dots,u_k$ be variables of $X_0$ with integer weights $\alpha_1,\dots,\alpha_k\ge0$, let $K\ge1$ be an integer, and let $L$ be the smallest integer with $2^L\ge2K$.
In time polynomial in $k$, $\log K$, and $\log(1+\max_i\alpha_i)$, we can add to $X_0$ new functional constraints with at most three inputs and at most two outputs each, $O(kL)$ in total, and new fixings, obtaining a functional formulation $X$ with the same free variables and multiple-choice constraints, outputs $\rho_0,\dots,\rho_{L-1}$ of $X$ for the residue, and an integral linear function $\kappa$ of the variables of $X$ for the quotient, with the following properties.

\textup{(a)} We have $\sum_i\alpha_ix_{u_i}=K\kappa(x)+\sum_l2^lx_{\rho_l}$ for every $x\in X$.

\textup{(b)} Every integral vector of $X_0$ extends to exactly one integral vector of $X$, and in it $\rho_0,\dots,\rho_{L-1}$ take the binary digits of $\big(\sum_i\alpha_ix_{u_i}\big)\bmod K$.
\end{lemma}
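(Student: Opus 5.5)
The plan is to build a binary ripple-carry adder circuit out of functional constraints with at most three inputs and two outputs, and then to reduce modulo $K$ by conditional subtractions. Throughout, linear identities at the solutions of each gate will be lifted to all of $X$ by Lemma~\ref{lem:circuit}(a). First, I would write each weight $\alpha_i$ in binary and expand the weighted sum as a sum of shifted copies of $x_{u_i}$. The running total is then accumulated modulo $K$ one term at a time, with an $L$-bit register throughout. The invariant is that after processing term $i$ we have an $L$-bit register $R^{(i)}$ with value in $\{0,\dots,K-1\}$ at integral points, and an integral linear function $\kappa_i$ such that $\sum_{i'\le i}\alpha_{i'}x_{u_{i'}}=K\kappa_i(x)+\sum_l2^lx_{R^{(i)}_l}$ holds on all of $X$.

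To add term $i$ I would not add $\alpha_i$ bitwise in full, since that would cost $\log\alpha_i$ gates per term rather than $L$. Instead, I would precompute $c_i=\alpha_i\bmod K$, which is done in time polynomial in $\log\alpha_i$ and $\log K$. The quotient part $\lfloor\alpha_i/K\rfloor x_{u_i}$ goes directly into $\kappa$, which is allowed because $\kappa$ may be any integral linear function. The step then computes $R+c_ix_{u_i}$ with $L$ full-adder gates. Each gate has inputs (register bit, $x_{u_i}$, carry) and outputs (sum bit, carry), where the constant bit of $c_i$ is built into $f_g$. Each full adder satisfies the affine identity $a+\beta b+c_{\mathrm{in}}=s+2c_{\mathrm{out}}$ at its solutions, with $\beta\in\{0,1\}$. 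By Lemma~\ref{lem:circuit}(a) this identity holds on all of $X$, and telescoping gives $R+c_ix_{u_i}=\sum_l2^ls_l+2^Lc_L$. Since $R<K$ and $c_i<K$ at integral points, the sum is below $2K\le2^L$, so I would fix the final carry to $0$. The fixing is harmless: it only removes nonintegral behaviour and keeps the identity, and it holds at every integral point.

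Next comes a conditional subtraction of $K$. I would compute $S-K$ by $L$ gates, with inputs (bit, borrow) and outputs (difference bit, borrow), adding $2^L-K$ in two's complement. This yields a flag $\phi$, the final carry, that equals $1$ exactly when $S\ge K$, together with the identity $\sum_l2^ls_l+2^L-K=\sum_l2^ld_l+2^L\phi$, which again holds on all of $X$. I would then select bitwise with multiplexer gates, with inputs $(s_l,d_l,\phi)$ and output $r_l$, each satisfying $r_l=s_l+\phi(d_l-s_l)$. This last identity is not affine, and I expect it to be the main obstacle. To fix it, each multiplexer gets two outputs, $r_l$ and $y_l=\phi d_l$. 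The affine identity $r_l+K_l$-type terms are then avoided by arguing at the level of the whole number: let one extra gate with inputs $(\phi,d_l,s_l)$ and outputs $(r_l,\ z_l)$, with $z_l=\phi(s_l-d_l)\ldots$ Since products are not affine, I would try instead the following. Select $r_l=(1-\phi)s_l+\phi d_l$, and note that the convex hull identity $r_l=s_l-w_l+w'_l$ holds with $w_l=x_{g,(\cdot,\cdot,1)}$-sums, which are integral linear functions of the $x_{g,v}$ variables. Indeed, $r_l$ equals $\sum_vx_{g,v}f_g(v)$, which is itself an integral linear function of the $x_{g,v}$ variables, and $\phi=\sum_{v:v_3=1}x_{g,v}$. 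Combining these gives an exact integral expression of $\sum2^lr_l$ minus $\sum2^ls_l$ plus $K\phi$. The remaining discrepancy is $2^L\phi$-terms, and these terms are integral linear in the $x_{g,v}$, so they can be absorbed into $\kappa$ provided the combination is divisible by $K$ as a polynomial identity. Checking this divisibility precisely is where I anticipate the difficulty. The fallback is to let $\kappa$ absorb $\phi$ (coefficient $1$) and verify that $\sum2^lr_l=\sum2^ls_l-K\phi$ holds vertexwise on the convex hull of the joint solutions. This requires making the selection a single constraint with the flag as input, which might exceed three inputs, in which case I would restructure it.

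Part (b) follows from Lemma~\ref{lem:circuit}(b): integral free variables force all variables to be integral and unique. At integral points the invariant register value lies in $\{0,\dots,K-1\}$ and is congruent to the sum modulo $K$, so it equals $\big(\sum_i\alpha_ix_{u_i}\big)\bmod K$. All fixings are consistent with these values, so every integral vector extends. The counts are $O(L)$ gates per term, hence $O(kL)$ in total.
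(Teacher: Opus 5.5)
Everything up to the comparison matches the paper: you reduce $\alpha_i$ modulo $K$ and move $\lfloor\alpha_i/K\rfloor x_{u_i}$ into $\kappa$, add $c_ix_{u_i}$ with $L$ digit/carry gates whose constant bits are built into $f_g$, fix the last carry to $0$, and read a flag $\phi$ off the last carry of $S+2^L-K$. At integral points this flag equals $1$ exactly when $S\ge K$.

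The gap is the reduction step. A multiplexer gate with inputs $(s_l,d_l,\phi)$ and output $r_l$ has inputs ranging over all of $\{0,1\}^3$, and its output $r_l=s_l+\phi(d_l-s_l)$ is not affine in them. So no affine identity involving $r_l$ holds at its solutions, and Lemma~\ref{lem:circuit}(a) gives you nothing. Concretely, in the coordinates $(s_l,d_l,\phi,r_l)$, both $\tfrac12(1,0,1,0)+\tfrac12(0,1,0,0)$ and $\tfrac12(0,0,0,0)+\tfrac12(1,1,1,1)$ lie in the convex hull of its solutions. They have the same inputs $(\tfrac12,\tfrac12,\tfrac12)$ but outputs $0$ and $\tfrac12$. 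Since each gate has its own variables $x_{g,v}$, nothing ties $\sum_l2^lx_{r_l}$ to $\sum_l2^lx_{s_l}-K\phi$ at fractional points of $X$.

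Your attempted repairs do not close this. Expressing $x_{r_l}$ as an integral linear function of the $x_{g,v}$ does not make the discrepancy $K$ times an integral linear function. The ``single constraint'' fallback would have at least $L+1$ inputs, which violates the three-input bound and requires at least $2^{L+1}\ge4K$ variables $x_{g,v}$, not polynomial in $\log K$. Property~(a) is needed on all of $X$, not only at integral vectors; this is exactly what Proposition~\ref{thm:crt} uses. So as written the proposal does not prove~(a).

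The missing idea is that \emph{no selection is needed}: subtracting $K\phi$ is linear in the $0/1$ variable $\phi$, exactly like adding $c_ix_{u_i}$.
\begin{itemize}
\item Use $L$ gates with inputs $(s_l,\phi,b_l)$ and outputs $(r_l,b_{l+1})$. Their function is schoolbook subtraction of $K_l\phi$, where $K_l$ is the $l$th binary digit of $K$, that is, $\hat s_l-K_l\hat\phi-\hat b_l=\hat r_l-2\hat b_{l+1}$.
\item Take $b_0$ to be the constant $0$ and fix $b_L$ to $0$. The fixing is consistent because $S-K\phi\in\{0,\dots,K-1\}$ at integral points.
\item The digit identity is affine, so it holds on all of $X$ by Lemma~\ref{lem:circuit}(a). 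Telescoping gives $\sum_l2^lx_{r_l}=\sum_l2^lx_{s_l}-K\phi$ on all of $X$.
\item Then $\phi$ enters $\kappa$ with coefficient $1$, and the digits $d_l$ are not needed at all.
\end{itemize}
This is the paper's construction; its flag is $q_i=1-\beta_i$, where $\beta_i$ is the last borrow of $s^i-K$. With this replacement the rest of your argument goes through, including~(b) via Lemma~\ref{lem:circuit}(b) and the $O(kL)$ count.
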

\begin{proof}
We may assume that $\alpha_i<K$ for every $i$.
Indeed, writing $\alpha_i=K\fl{\alpha_i/K}+\alpha'_i$ with $\alpha'_i\in\{0,\dots,K-1\}$, we have $\sum_i\alpha_ix_{u_i}=\sum_i\alpha'_ix_{u_i}+K\sum_i\fl{\alpha_i/K}\,x_{u_i}$.
Since $\alpha'_i\equiv\alpha_i\pmod K$, the residue in (b) is the same for both weights.
So it suffices to prove the lemma for the weights $\alpha'_i$, and then to add $\sum_i\fl{\alpha_i/K}\,x_{u_i}$ to $\kappa$.

We first describe how to add a fixed multiple of a variable to a number whose binary digits are variables, and how to subtract it.
Here, for a sequence $r=(r_0,\dots,r_{L-1})$ of outputs, the number is $r(x)=\sum_l2^lx_{r_l}$, and we use the same notation for every such sequence.
The multiple is $\theta x_u$ for a variable $u$ of $X$ and an integer $\theta\in\{0,\dots,2^L-1\}$ with binary digits $\theta_0,\dots,\theta_{L-1}$.
Some inputs of these operations are constant.
To treat them like the other inputs, we obtain the constants $0$ and $1$ as the outputs of two functional constraints without inputs.
To add $\theta$ times $u$ to $r$, that is, to obtain a sequence $s=(s_0,\dots,s_{L-1})$ of outputs with $s(x)=r(x)+\theta x_u$, we introduce, for $l=0,\dots,L-1$, a functional constraint with inputs $r_l$, $u$, and $o_l$, and with outputs $s_l$ and $o_{l+1}$, the \emph{digit} and the \emph{carry}.
Its function maps the values $\hat x_{r_l},\hat x_u,\hat x_{o_l}\in\{0,1\}$ of the inputs to the values $\hat x_{s_l},\hat x_{o_{l+1}}\in\{0,1\}$ with $\hat x_{r_l}+\theta_l\hat x_u+\hat x_{o_l}=\hat x_{s_l}+2\hat x_{o_{l+1}}$.
The first carry $o_0$ is the constant $0$, and we fix the last carry $o_L$ to $0$.
By Lemma~\ref{lem:circuit}(a), we have $x_{r_l}+\theta_lx_u+x_{o_l}=x_{s_l}+2x_{o_{l+1}}$ for every $x\in X$ and every $l$, and multiplying by $2^l$ and summing over $l$ gives $s(x)=r(x)+\theta x_u$, since $x_{o_0}=0$ and $o_L$ is fixed to $0$.
Let $\hat x$ be a $0/1$ vector on the variables of $X$ in which each output takes the value of its function at the values of its inputs, and in which $r$ takes the binary digits of an integer $z$.
If $z+\theta\hat x_u<2^L$, then, as in schoolbook addition, $s$ takes the binary digits of $z+\theta\hat x_u$ and $\hat x_{o_L}=0$.
To subtract $\theta$ times $u$ from $r$, that is, to obtain $s$ with $s(x)=r(x)-\theta x_u$, we do the same, but with $\hat x_{r_l}-\theta_l\hat x_u-\hat x_{o_l}=\hat x_{s_l}-2\hat x_{o_{l+1}}$.
As with additions, $s(x)=r(x)-\theta x_u$ for every $x\in X$.
For $\hat x$ and $z$ as above, if $z-\theta\hat x_u\ge0$, then, as in schoolbook subtraction, $s$ takes the binary digits of $z-\theta\hat x_u$ and $\hat x_{o_L}=0$.

We now use these operations to compute the residue, by adding the terms $\alpha_ix_{u_i}$ one at a time and reducing modulo $K$ after each addition.
Let every entry of $\rho^0$ be the constant $0$.
For $i=1,\dots,k$, we obtain $\rho^i$ from $\rho^{i-1}$ in four steps.
We add $\alpha_i$ times $u_i$ to $\rho^{i-1}$, with result $s^i$.
To compare $s^i(x)$ with $K$, we subtract $K$ times the constant $1$ from $s^i$.
We use only the last carry of this subtraction, which we call $\beta_i$ and do not fix.
For integral $x\in X$, it takes the value $1$ exactly when $s^i(x)<K$.
We introduce a functional constraint with input $\beta_i$ and output $q_i$, with $\hat x_{q_i}=1-\hat x_{\beta_i}$, so that $q_i$ indicates whether we must subtract $K$.
Finally, we subtract $K$ times $q_i$ from $s^i$, with result $\rho^i$.
We let $\rho=\rho^k$ and $\kappa(x)=\sum_ix_{q_i}$.
Each of the $3k$ additions and subtractions introduces $L$ functional constraints, and there are $k+2$ others, so there are $O(kL)$ new functional constraints, and they can be computed in the stated time.

We now prove (a).
By the identities for additions and subtractions, we have $s^i(x)=\rho^{i-1}(x)+\alpha_ix_{u_i}$ and $\rho^i(x)=s^i(x)-Kx_{q_i}$ for every $x\in X$ and every $i$, and summing over $i$ gives (a), since $\rho^0(x)=0$ and $\rho=\rho^k$.

We now prove (b).
Let $x^0$ be an integral vector of $X_0$.
We extend it to a $0/1$ vector $\hat x$ on the variables of $X$: each new output takes the value of its function at the values of its inputs, one functional constraint after the other, and each new variable $x_{g,v}$ takes the value $1$ if $v$ is the vector of values of the inputs of $g$, and $0$ otherwise.
Let $z_i=\big(\sum_{i'\le i}\alpha_{i'}\hat x_{u_{i'}}\big)\bmod K$.
We show by induction on $i$ that, in $\hat x$, $\rho^i$ takes the binary digits of $z_i$ and the fixings introduced for the first $i$ terms are satisfied.
This holds for $i=0$.
Let $i\ge1$.
Since $z_{i-1}<K$ and $\alpha_i<K$, we have $z_{i-1}+\alpha_i\hat x_{u_i}<2K\le2^L$, so, as shown above for additions, $s^i$ takes the binary digits of $z_{i-1}+\alpha_i\hat x_{u_i}$, and the last carry of this addition, which is fixed to $0$, takes the value $0$.
Then, as in schoolbook subtraction, $\beta_i$ takes the value $1$ if and only if $z_{i-1}+\alpha_i\hat x_{u_i}<K$, so $q_i$ takes the value $1$ if and only if $z_{i-1}+\alpha_i\hat x_{u_i}\ge K$.
Hence $z_{i-1}+\alpha_i\hat x_{u_i}-K\hat x_{q_i}$ lies in $\{0,\dots,K-1\}$ and is congruent to $z_i$ modulo $K$, so it equals $z_i$.
Since it is nonnegative, as shown above for subtractions, $\rho^i$ takes its binary digits, and the last carry of this subtraction takes the value $0$.
For $i=k$, this shows that $\hat x$ satisfies the new fixings.
It also satisfies the extended formulations of the new functional constraints, by construction, and the constraints of $X_0$, since $x^0\in X_0$, so $\hat x\in X$.
By Lemma~\ref{lem:circuit}(b), $\hat x$ is the only vector of $X$ with free variables $b(x^0)$, and so the only integral vector of $X$ that extends $x^0$.
In it, $\rho=\rho^k$ takes the binary digits of $z_k$, which is $\big(\sum_i\alpha_ix^0_{u_i}\big)\bmod K$.
\end{proof}

\subsection{Residue encodings}\label{sec:residue}

We now construct the functional formulations from which we obtain the formulas $F(p,h,H,D,t)$ of Proposition~\ref{prop:satlattice}.
We encode vectors with possibly exponentially large entries by their residues.
Let $p=(p_1,\dots,p_s)$, where $p_1,\dots,p_s\ge2$ are pairwise coprime integers, let $\Pi=p_1\cdots p_s$, and let $p_{\max}=\max_jp_j$.
An integer in $\{0,\dots,\Pi-1\}$ is determined by its residues modulo $p_1,\dots,p_s$, and we encode the residue modulo $p_j$ by $p_j$ free variables with a multiple-choice constraint.
For each $j$, let $\iota_j\in\{1,\dots,\Pi-1\}$ with $\iota_j\equiv1\pmod{p_j}$ and $\iota_j\equiv0\pmod{p_{j'}}$ for $j'\ne j$.
By the Chinese remainder theorem, the integer in $\{0,\dots,\Pi-1\}$ with residues $a_1,\dots,a_s$ modulo $p_1,\dots,p_s$ is $\big(\sum_j\iota_ja_j\big)\bmod\Pi$.
We compute it in binary with Lemma~\ref{lem:arith}, so that, for every vector satisfying the resulting formulation, the number written by its binary digits equals a linear function of the free variables minus $\Pi$ times an integral linear function.

Let $h\ge1$ with $2^h\le\Pi$, let $D\ge2$, $m\ge1$, and $m'\ge1$ be integers, let $H\in\{0,\dots,D-1\}^{m'\times m}$, let $M=\Lambda_D^\perp(H)$, and let $t\in\Z^m$.
Let $d\in\{0,\dots,D-1\}^{m'}$ with $d\equiv Ht\pmod D$.
The functional formulation $X=X(p,h,H,D,t)$ is built in three parts.

First, for $i=1,\dots,m$ and $j=1,\dots,s$, it has free variables $x_{i,j,a}$ for $a=0,\dots,p_j-1$, with the multiple-choice constraint $\sum_ax_{i,j,a}=1$.
A $0/1$ vector $x$ satisfying these constraints \emph{encodes} the vector in $\{0,\dots,\Pi-1\}^m$ whose $i$th entry has residue $a$ modulo $p_j$ whenever $x_{i,j,a}=1$.
For instance, for $p=(3,4)$, if $x_{i,1,1}=x_{i,2,3}=1$, then the $i$th entry of the encoded vector is $7$, since $7\equiv1\pmod3$ and $7\equiv3\pmod4$.

Second, for $i=1,\dots,m$ in turn, we apply Lemma~\ref{lem:arith} to the current formulation, with the variables $x_{i,j,a}$ for all $j$ and $a$, the weight $\iota_ja$ for $x_{i,j,a}$, and $K=\Pi$.
This gives outputs $\rho_{i,0},\rho_{i,1},\dots$ and an integral linear function $\kappa_i$.
We fix the outputs $\rho_{i,l}$ with $l\ge h$ to $0$, and we let $c_i(x)=\sum_{l<h}2^lx_{\rho_{i,l}}$.
For integral $x$, these fixings force the encoded vector to lie in $\{0,\dots,2^h-1\}^m$, and hence to equal $c(x)=(c_1(x),\dots,c_m(x))$.

Third, for $r=1,\dots,m'$ in turn, we apply Lemma~\ref{lem:arith} to the current formulation, with the variables $\rho_{i,l}$ for $i=1,\dots,m$ and $l<h$, the weight $H'_{r,(i,l)}$ for $\rho_{i,l}$, where $H'_{r,(i,l)}\in\{0,\dots,D-1\}$ and $H'_{r,(i,l)}\equiv2^lH_{ri}\pmod D$, and $K=D$.
This gives outputs $\sigma_{r,0},\sigma_{r,1},\dots$, which we fix to the binary digits of $d_r$, and an integral linear function $\kappa'_r$.
For integral $x$, these fixings force $Hc(x)\equiv Ht\pmod D$, that is, $c(x)\in t+M$.

The next lemma gives the properties of $X(p,h,H,D,t)$ that we use.

\begin{lemma}\label{lem:crtcircuit}
Let $X=X(p,h,H,D,t)$ and $c(x)=(c_1(x),\dots,c_m(x))$, and let $V_i(x)=\sum_{j,a}\iota_ja\,x_{i,j,a}$ for every $i$.

\textup{(a)} We have $c_i(x)=V_i(x)-\Pi\kappa_i(x)$ and $0\le c_i(x)\le2^h-1$ for every $x\in X$ and every $i$. Moreover, there are integral linear functions $\zeta_1,\dots,\zeta_{m'}$ of the variables of $X$ with $(Hc(x)-d)/D=(\zeta_1(x),\dots,\zeta_{m'}(x))$ for every $x\in X$.

\textup{(b)} The integral vectors of $X$ correspond one-to-one to the vectors in $(t+M)\cap\{0,\dots,2^h-1\}^m$.

\textup{(c)} We have $X=X(\mathcal T)$ for a family $\mathcal T$ of sets of at most $\max\{9,p_{\max}\}$ literals, and $X$ has $O\big(m(\sum_jp_j)\log\Pi+m'mh\log D\big)$ functional constraints. Moreover, $\mathcal T$ can be computed in time polynomial in $m$, $m'$, $\sum_jp_j$, $\log D$, and the encoding size of $t$.
\end{lemma}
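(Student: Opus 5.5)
Parts (a)–(c) all follow by unwinding the construction of $X(p,h,H,D,t)$ through Lemmas~\ref{lem:arith} and~\ref{lem:circuit}, one of the three building stages at a time.

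For (a), apply Lemma~\ref{lem:arith}(a) to the $i$th application in the second stage. The weights are $\iota_ja$ on $x_{i,j,a}$, so it gives $V_i(x)=\Pi\kappa_i(x)+\sum_l2^lx_{\rho_{i,l}}$. Every $\rho_{i,l}$ with $l\ge h$ is fixed to $0$, which yields $c_i(x)=V_i(x)-\Pi\kappa_i(x)$. Since $0\le x_{\rho_{i,l}}\le1$ on $X$, we get $0\le c_i(x)\le\sum_{l<h}2^l=2^h-1$.

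For the $\zeta_r$, apply Lemma~\ref{lem:arith}(a) to the $r$th application in the third stage. It gives
\[
\sum_{i,l<h}H'_{r,(i,l)}x_{\rho_{i,l}}=D\kappa'_r(x)+d_r,
\]
because the $\sigma$ outputs are fixed to the digits of $d_r$. Write $H'_{r,(i,l)}=2^lH_{ri}-D\mu_{r,i,l}$ with $\mu_{r,i,l}\in\Z$. Then the left side equals $(Hc(x))_r-D\sum\mu_{r,i,l}x_{\rho_{i,l}}$. So
\[
\zeta_r=\kappa'_r+\sum_{i,l<h}\mu_{r,i,l}x_{\rho_{i,l}}
\]
is an integral linear function with $(Hc(x)-d)_r=D\zeta_r(x)$.

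For (b), I would use Lemma~\ref{lem:circuit}(b) and Lemma~\ref{lem:arith}(b) repeatedly. An integral $x\in X$ is determined by its free variables. These free variables satisfy the multiple-choice constraints, so they are exactly the encodings of vectors $y\in\{0,\dots,\Pi-1\}^m$.

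Conversely, each such encoding extends uniquely to an integral vector of the formulation before the fixings. By Lemma~\ref{lem:arith}(b), in that extension $\rho_i$ carries the binary digits of $(\sum_j\iota_ja_j)\bmod\Pi=y_i$. Hence the fixings $\rho_{i,l}=0$ for $l\ge h$ hold if and only if $y\in\{0,\dots,2^h-1\}^m$, and in that case $c(x)=y$.

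The third stage computes $(\sum H'_{r,(i,l)}\rho_{i,l})\bmod D=(Hy)_r\bmod D$. Its fixings hold if and only if this equals $d_r$, that is, if and only if $Hy\equiv Ht\pmod D$, i.e.\ $y\in t+M$. Since the correspondence is $x\mapsto c(x)$, it is one-to-one.

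For (c), $X$ is an exact-one system, as noted after the definition of functional formulations, so it remains to bound the set sizes. The multiple-choice constraints have $p_j\le p_{\max}$ literals. Each functional constraint has at most three inputs and two outputs, hence at most $2^3=8$ variables $x_{g,v}$. The equation for one entry of $(x_{I_g},x_{O_g})$ therefore has at most $1+8=9$ literals, and the convexity equation has at most $8$. Fixings have one literal.

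The count of functional constraints comes from Lemma~\ref{lem:arith}. In the second stage each of the $m$ calls has $k=\sum_jp_j$ and $L=O(\log\Pi)$, giving $O(m(\sum_jp_j)\log\Pi)$. In the third stage each of the $m'$ calls has $k=mh$ and $L=O(\log D)$, giving $O(m'mh\log D)$.

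Running time also follows from Lemma~\ref{lem:arith}. The $\iota_j$ are computable by the extended Euclidean algorithm and satisfy $\log\iota_j\le\log\Pi\le\sum_j\log p_j$. The $H'$ and $d$ are computed by reductions mod $D$.

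The main obstacle is the bookkeeping in (a), namely turning the Lemma~\ref{lem:arith} identity with reduced weights $H'$ into an integral linear expression for $(Hc(x)-d)/D$. This works because the difference $2^lH_{ri}-H'_{r,(i,l)}$ is a multiple of $D$. One also needs that the bound $0\le c_i\le2^h-1$ uses only the box constraints of the outputs, which hold on all of $X$.
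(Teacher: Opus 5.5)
Your proposal is correct and follows the paper's proof essentially step by step. Part (a) uses the same applications of Lemma~\ref{lem:arith}(a) together with the fixings, and your integers $\mu_{r,i,l}$ are exactly the entries of the paper's matrix $W$ with $HG=H'+DW$. Parts (b) and (c) proceed by the same stagewise use of Lemmas~\ref{lem:arith}(b) and~\ref{lem:circuit}(b) and the same counting. One detail is worth adding: the polynomial bound in (c) also uses $h\le\log_2\Pi\le\sum_jp_j$, which makes the $m'mh\log D$ term polynomial in the stated parameters.
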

\begin{proof}
Let $X_1$ and $X_2$ be the functional formulations obtained after the first and the second part of the construction of $X$, so that $X$ is obtained from $X_2$ by the third part.
Each part only adds variables and constraints, so every vector of $X$ restricts to vectors of $X_1$ and $X_2$.
Let $\rho$ be the family of the outputs $\rho_{i,l}$ with $l<h$, indexed by the pairs $(i,l)$, and let $G\in\Z^{m\times mh}$ have entry $2^l$ in row $i$ and column $(i,l)$, and $0$ elsewhere, so that $c(x)=Gx_\rho$ and $H'\equiv HG\pmod D$.

(a)
By Lemma~\ref{lem:arith}(a), $V_i(x)=\Pi\kappa_i(x)+\sum_l2^lx_{\rho_{i,l}}$ for every $x\in X_2$, and hence for every $x\in X$.
Since the outputs $\rho_{i,l}$ with $l\ge h$ are fixed to $0$, we obtain $V_i(x)=\Pi\kappa_i(x)+c_i(x)$ for every $x\in X$.
Moreover, $0\le c_i(x)\le2^h-1$, since all variables lie in $[0,1]$.
By Lemma~\ref{lem:arith}(a), $H'_rx_\rho=D\kappa'_r(x)+\sum_l2^lx_{\sigma_{r,l}}$ for every $x\in X$ and every $r$, where $H'_r$ is the $r$th row of $H'$.
The outputs $\sigma_{r,l}$ are fixed to the binary digits of $d_r$, so $H'_rx_\rho=D\kappa'_r(x)+d_r$ for every $x\in X$.
Writing $HG=H'+DW$ with $W$ integral, we obtain $H_rc(x)=H_rGx_\rho=d_r+D(\kappa'_r(x)+W_rx_\rho)$, where $H_r$ and $W_r$ are the $r$th rows of $H$ and $W$.
Hence, for every $x\in X$, the $r$th entry of $(Hc(x)-d)/D$ equals $\zeta_r(x)=\kappa'_r(x)+W_rx_\rho$, which is an integral linear function, since $\kappa'_r$ is an integral linear function and $W$ is integral.

(b)
The integral vectors of $X_1$ are the $0/1$ vectors that satisfy the multiple-choice constraints.
Such a vector has exactly one $a=a_{ij}$ with $x_{i,j,a}=1$ for all $i$ and $j$, and it encodes the vector $c\in\{0,\dots,\Pi-1\}^m$ with $c_i\equiv a_{ij}\pmod{p_j}$ for all $j$.
By the Chinese remainder theorem, this gives a bijection from the integral vectors of $X_1$ to $\{0,\dots,\Pi-1\}^m$.
Since $\sum_j\iota_ja_{ij}\equiv c_i\pmod\Pi$, Lemma~\ref{lem:arith}(b), applied for $i=1,\dots,m$, shows that each integral vector of $X_1$ extends to exactly one integral vector satisfying the formulation obtained from $X_1$ by the applications of Lemma~\ref{lem:arith} in the second part, and in it the outputs $\rho_{i,l}$ take the binary digits of $c_i$.
This vector satisfies the fixings of the second part if and only if $c\in\{0,\dots,2^h-1\}^m$.
Hence the integral vectors of $X_1$ with $c$ in this box extend to exactly one integral vector of $X_2$ each, in which the outputs $\rho_{i,l}$ with $l<h$ take the binary digits of $c_i$, and the others extend to none.
By Lemma~\ref{lem:arith}(b), applied for $r=1,\dots,m'$, each integral vector of $X_2$ extends to exactly one integral vector satisfying the formulation obtained from $X_2$ by the applications of Lemma~\ref{lem:arith} in the third part, and in it the outputs $\sigma_{r,l}$ take the binary digits of $(H_rc)\bmod D$, since $H'\equiv HG\pmod D$.
This vector satisfies the fixings of the third part, and so lies in $X$, if and only if $Hc\equiv d\equiv Ht\pmod D$, that is, $c\in t+M$.
Since every integral vector of $X$ restricts to integral vectors of $X_1$ and $X_2$, the integral vectors of $X$ correspond one-to-one to the vectors $c\in(t+M)\cap\{0,\dots,2^h-1\}^m$, which lie in $\{0,\dots,\Pi-1\}^m$ since $2^h\le\Pi$.

(c)
The functional constraints have at most three inputs, so the exact-one equations of their extended formulations have at most $9$ literals: $\sum_vx_{g,v}=1$ has at most $8$, and each coordinate equation has at most $8$ variables $x_{g,v}$ and one more literal.
The multiple-choice constraints have $p_j$ literals.
The second part applies Lemma~\ref{lem:arith} $m$ times with $k=\sum_jp_j$ and $K=\Pi$, and the third part applies it $m'$ times with $k=mh$ and $K=D$.
Since each application adds $O(kL)$ functional constraints, where $L=O(\log K)$, this gives the stated number of functional constraints.
This number is polynomial in $m$, $m'$, $\sum_jp_j$, and $\log D$, since $h\le\log_2\Pi\le\sum_jp_j$.
The numbers $\iota_j$ can be computed by the extended Euclidean algorithm, and $H'$ and $d$ by reducing $2^lH_{ri}$ and $Ht$ modulo $D$.
\end{proof}

\subsection{Thin formulas}\label{sec:thinformulas}

We now show that thin formulas satisfy the promise, which completes the proof of Proposition~\ref{prop:satlattice}.
We use the following description of the dual lattice of $\Lambda_D^\perp(H)$, which is stated in \cite[Section~2]{MicciancioRegev09}, and we include its short proof.

\begin{lemma}\label{lem:qarydual}
Let $D\ge1$, $m\ge1$, and $m'\ge1$ be integers, let $H\in\Z^{m'\times m}$, and let $M=\Lambda_D^\perp(H)$.
Then $DM^*=H\T\Z^{m'}+D\Z^m$.
\end{lemma}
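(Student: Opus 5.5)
The plan is to prove the two inclusions separately, using the definition $M^*=\{w\in\R^m: w\T y\in\Z\text{ for all }y\in M\}$ and the standard fact that dualizing reverses inclusions and satisfies $L^{**}=L$ for full-rank lattices. Both sides are full-rank lattices: $M\supseteq D\Z^m$ has full rank, and $H\T\Z^{m'}+D\Z^m$ is a subgroup of $\Z^m$ containing $D\Z^m$. So the statement is equivalent to $M^*=\frac1D(H\T\Z^{m'}+D\Z^m)$.

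For the inclusion $\supseteq$, take $w=\frac1D(H\T z+Du)$ with $z\in\Z^{m'}$ and $u\in\Z^m$, and take $y\in M$. Then
\[
w\T y=\frac{z\T(Hy)}D+u\T y .
\]
Here $Hy\in D\Z^{m'}$ by the definition of $\Lambda_D^\perp(H)$, so $w\T y\in\Z$. Hence $\frac1D(H\T\Z^{m'}+D\Z^m)\subseteq M^*$.

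For the inclusion $\subseteq$, set $N=H\T\Z^{m'}+D\Z^m$. I will show that the dual of $\frac1DN$ is contained in $M$, and then dualize. Suppose $y\in\R^m$ satisfies $\frac1Dw\T y\in\Z$ for all $w\in N$.
\begin{itemize}
\item Taking $w=De_i$ gives $y_i\in\Z$, so $y\in\Z^m$.
\item Taking $w=H\T e_r$ gives $(Hy)_r\in D\Z$, so $Hy\equiv\0\pmod D$.
\end{itemize}
Hence $y\in M$, that is, $(\frac1DN)^*\subseteq M$. Dualizing reverses the inclusion, so $M^*\subseteq(\frac1DN)^{**}=\frac1DN$.

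There is no substantial obstacle. The only point needing care is that $N$ is a full-rank lattice, so that $L^{**}=L$ (stated in the preliminaries for full-rank lattices) applies. This holds because $N$ is a subgroup of $\Z^m$ containing $D\Z^m$, hence discrete and of full rank.
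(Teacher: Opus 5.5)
Your proof is correct and is essentially the paper's argument: the paper shows $DL_0^*=M$ for $L_0=H\T\Z^{m'}+D\Z^m$ by testing the generators $De_i$ and $H\T z$, and then dualizes using $L^{**}=L$ and $(DL)^*=D^{-1}L^*$. You use the same generator test to get $(\tfrac1D N)^*\subseteq M$ and then dualize, and you obtain the other inclusion by checking $\tfrac1D N\subseteq M^*$ directly, which is only a cosmetic rearrangement of the same proof.
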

\begin{proof}
Let $L_0=H\T\Z^{m'}+D\Z^m$, and let $L_0^*$ be its dual lattice.
A vector $y\in\R^m$ lies in $DL_0^*$ if and only if $v\T y\in D\Z$ for every $v\in L_0$.
Since this condition is preserved under integer combinations of $v$, it suffices to check it for the vectors $De_i$ with $i=1,\dots,m$ and $H\T z$ with $z\in\Z^{m'}$, which generate $L_0$.
For the vectors $De_i$, this condition holds if and only if $y\in\Z^m$, and for the vectors $H\T z$, it holds if and only if $Hy\in D\Z^{m'}$.
Hence $DL_0^*=M$.
Since $(DL)^*=D^{-1}L^*$ for every lattice $L$, we obtain $DM^*=D(DL_0^*)^*=L_0^{**}=L_0$.
\end{proof}

The next lemma plays the role of Lemma~\ref{lem:dualbasis}: instead of short vectors of the dual lattice, we need vectors of the dual lattice close to given vectors.

\begin{lemma}\label{lem:dualcover}
Let $M\subseteq\R^m$ be a lattice.
For every $y\in\R^m$, there is $w\in M^*$ with $\|w-y\|_1\le m^{3/2}/(2\lambda_1(M))$.
\end{lemma}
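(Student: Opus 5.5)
This bound follows from the covering radius of $M^*$ combined with the same transference theorem of Banaszczyk used in Lemma~\ref{lem:dualbasis}. The Euclidean bound should come out to $\sqrt m/(2\lambda_1(M))$, and the $\ell_1$ bound then follows from $\|z\|_1\le\sqrt m\,\|z\|$. So the plan is to show that every $y\in\R^m$ lies within Euclidean distance $\sqrt m/(2\lambda_1(M))$ of $M^*$, that is, that the covering radius $\mu(M^*)$ is at most this quantity.

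The first route I would try is Banaszczyk's transference bound for the covering radius, $\lambda_1(L)\,\mu(L^*)\le m/2$ \cite[Theorem~2.2]{Banaszczyk93}. Applied with $L=M$ it gives $\mu(M^*)\le m/(2\lambda_1(M))$, and so only $\|w-y\|_1\le m^{3/2}/(2\lambda_1(M))$ after converting norms. That is exactly the stated bound, with the factor $m^{3/2}$ arising as $m\cdot\sqrt m$.

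As a self-contained alternative avoiding that citation, I would use Lemma~\ref{lem:dualbasis}'s ingredients. By Banaszczyk, $\lambda_m(M^*)\le m/\lambda_1(M)$, so there are linearly independent $v_1,\dots,v_m\in M^*$ of norm at most $m/\lambda_1(M)$. The standard covering argument applies: pass to a basis of $M^*$ with Gram--Schmidt vectors $\tilde b_i$ satisfying $\|\tilde b_i\|\le\lambda_i(M^*)$, and run Babai's nearest plane algorithm. This gives a $w$ with $\|w-y\|^2\le\frac14\sum_i\|\tilde b_i\|^2\le\frac m4\lambda_m(M^*)^2$, hence $\|w-y\|\le\frac{\sqrt m}{2}\cdot\frac{m}{\lambda_1(M)}$. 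That is worse than needed by a factor of $\sqrt m$ before the norm conversion, so this route alone would not suffice; citing the covering-radius transference theorem directly is what gives the exact constant.

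The main step, and the only real obstacle, is therefore to invoke the correct transference inequality, $\mu(L^*)\lambda_1(L)\le m/2$, rather than the successive-minima one. After that the proof is two lines: pick $w\in M^*$ nearest to $y$, so $\|w-y\|\le\mu(M^*)\le m/(2\lambda_1(M))$, then apply Cauchy--Schwarz to pass to the $\ell_1$-norm.
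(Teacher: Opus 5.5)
Your main route is exactly the paper's proof: Banaszczyk's transference theorem \cite[Theorem~2.2]{Banaszczyk93} gives $\operatorname{dist}(y,M^*)\le m/(2\lambda_1(M))$, and $\|z\|_1\le\sqrt m\,\|z\|$ then yields the stated $\ell_1$ bound $m^{3/2}/(2\lambda_1(M))$. Your opening sentence, which says the Euclidean bound should be $\sqrt m/(2\lambda_1(M))$, is a slip: the Euclidean bound is $m/(2\lambda_1(M))$, which is the value you correctly use afterwards.
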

\begin{proof}
By a transference theorem of Banaszczyk \cite[Theorem~2.2]{Banaszczyk93}, $\operatorname{dist}(y,M^*)\le m/(2\lambda_1(M))$.
Let $w\in M^*$ with $\|w-y\|=\operatorname{dist}(y,M^*)$.
Then $\|w-y\|_1\le\sqrt m\|w-y\|\le m^{3/2}/(2\lambda_1(M))$.
\end{proof}

The next proposition is the main step in the proof of Proposition~\ref{prop:satlattice}.
It plays the role of Proposition~\ref{cor:promise} for the functional formulations.

\begin{proposition}\label{thm:crt}
Let $X=X(p,h,H,D,t)$ and $M=\Lambda_D^\perp(H)$.
If $\lambda_1(M)>m^{3/2}p_{\max}(2^h-1)/2$, then every rational subpolyhedron of $X$ has Chv\'atal rank at most one.
\end{proposition}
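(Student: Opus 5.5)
The plan is to imitate the proof of Lemma~\ref{lem:slab}: find integral linear functions of the variables of $X$ whose width on $X$, and hence on every rational subpolyhedron $R\subseteq X$, is less than one, and whose CG cuts fix all free variables $x_{i,j,a}$. Once the free variables are fixed at $0/1$ values on $R'$, Lemma~\ref{lem:circuit}(b) shows that $R'$ is empty or a single integral vector of $X$. That vector lies in $R$, so $R'\subseteq R_I$ and therefore $R'=R_I$.

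\emph{Step 1: integrality of dual functionals on all of $X$.} Let $w\in M^*$. By Lemma~\ref{lem:qarydual}, $Dw=H\T z+Dy'$ with $z\in\Z^{m'}$ and $y'\in\Z^m$. Then Lemma~\ref{lem:crtcircuit}(a) gives, for every $x\in X$,
\[
w\T c(x)=z\T\zeta(x)+y'^{\mathsf T}c(x)+z\T d/D .
\]
The term $c_i(x)=V_i(x)-\Pi\kappa_i(x)$ is an integral linear function, so $w\T c(x)$ is an integral linear function plus a rational constant. This identity holds on the whole of $X$, not only at its integral vectors, and this is exactly where the extended formulations are used.

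\emph{Step 2: isolating a residue.} Fix $i$, $j$, and a multiplier $b\in\{1,\dots,p_j-1\}$. By Lemma~\ref{lem:dualcover}, there is $w\in M^*$ with
\[
\|w-(b/p_j)e_i\|_1\le m^{3/2}/(2\lambda_1(M))<1/(p_{\max}(2^h-1)).
\]
Since $0\le c(x)\le(2^h-1)\1$ on $X$, the error term $\varepsilon(x)=(w-(b/p_j)e_i)\T c(x)$ has width less than $1/p_j$ on $X$.

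Next, $(b/p_j)c_i(x)=(b/p_j)V_i(x)-(b\Pi/p_j)\kappa_i(x)$. Here $\Pi/p_j$ is an integer, and $\iota_{j'}\equiv0\pmod{p_j}$ for $j'\ne j$ while $\iota_j\equiv1\pmod{p_j}$. Hence, up to an integral linear function, $(b/p_j)c_i(x)$ equals $\sum_a\{ab/p_j\}x_{i,j,a}$, where $\{\cdot\}$ denotes the fractional part. Combining this with Step~1, the function
\[
\sum_a\{ab/p_j\}\,x_{i,j,a}+\varepsilon(x)
\]
equals an integral linear function $\lambda_{i,j,b}$ plus a constant. By the multiple-choice constraint, the first sum lies in $[0,(p_j-1)/p_j]$ on $X$, and $\varepsilon$ has width less than $1/p_j$. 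So $\width_X(\lambda_{i,j,b})<1$. As in Lemma~\ref{lem:slab}, the two CG cuts along $\pm\lambda_{i,j,b}$ for $R$ either have empty intersection or fix $\lambda_{i,j,b}$ to an integer. On $R'$ this confines $\sum_a\{ab/p_j\}x_{i,j,a}$ to an interval of length less than $1/p_j$.

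\emph{Step 3 (main obstacle): from these intervals to integral free variables.} It remains to show that these confinements, for all $b$ and together with $\sum_ax_{i,j,a}=1$ and $x\ge\0$, force $(x_{i,j,a})_a$ to be a unit vector. A single interval does not suffice, since a fractional mixture of residues can have the same weighted average. My first attempt would use the multipliers $b$ and $-b$ together: since $\{ab/p_j\}+\{-ab/p_j\}=1-[\,p_j\mid ab\,]$, these combinations control $\sum_{a:\,p_j\mid ab}x_{i,j,a}$. Taking $b=1$ and the residue-class indicators obtained from such sawtooth combinations, I would try to express each indicator $x_{i,j,a_0}$ as an integral linear function of width less than one on $X$. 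If sawtooth functions do not span the needed indicators for composite $p_j$, the fallback is different: argue directly that a vector of the simplex whose sawtooth averages all lie within $1/p_j$ of the values at one residue $a^*$ must put all its mass on $a^*$, using that all values $\{ab/p_j\}$ are multiples of $1/p_j$.

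Once every $x_{i,j,\cdot}$ is fixed integrally on $R'$, the conclusion follows from Lemma~\ref{lem:circuit}(b) as described at the start.
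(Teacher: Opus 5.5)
Your Steps~1 and~2 are correct and match the paper's setup. Step~3, however, is a genuine gap, and neither of your proposed routes closes it.

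\textbf{The fallback claim is false.} For $p_j=3$, the vector $(x_{i,j,a})_a=(0,\tfrac12,\tfrac12)$ has both averages $\sum_a\{ab/3\}x_{i,j,a}$, for $b=1,2$, equal to $\tfrac12$. This lies within $\tfrac16<\tfrac13$ of the values $\tfrac13$ and $\tfrac23$ at the residue $a^*=1$, yet the vector is fractional. For $p_j=2$ there is only one average, $x_{i,j,1}/2$, and confining it to an interval of length less than $\tfrac12$ does not force $x_{i,j,1}\in\{0,1\}$.

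More fundamentally, passing from fixed integral functions to interval confinements discards what you need. Once you do that, you control only the range of the error terms $\varepsilon(x)$, not their values.

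\textbf{The sawtooth route does not reach all residues.} It works better if you take $w_{-b}=-w_b$: then the errors and constants cancel, and $\lambda_{i,j,b}+\lambda_{i,j,-b}=\sum_{a:\,p_j\nmid ab}x_{i,j,a}$ holds exactly on $X$. But multipliers only yield indicators of the sets $\{a: p_j\mid ab\}$, which are subgroups of $\Z_{p_j}$. For prime $p_j$ this gives only $x_{i,j,0}$, never $x_{i,j,a_0}$ with $a_0\ne0$.

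\textbf{The missing idea} is to vary integer \emph{offsets} rather than multipliers, using a single dual vector $w$ near $e_i/p_j$ for all functions. As your Step~2 shows, integrality of the resulting function depends only on the coefficients of $x_{i,j,a}$ modulo~$1$. So for each $l=0,\dots,p_j$ you may replace $a/p_j$ by $f_l(a)=a/p_j-[a\ge l]$, where $[\cdot]$ is $1$ if the condition holds and $0$ otherwise. This gives the following.
\begin{itemize}
\item Each $f_l$ has values in an interval of length at most $1-1/p_j$.
\item Hence $\psi_l(x)=\sum_af_l(a)x_{i,j,a}+(w-e_i/p_j)\T c(x)$ equals an integral linear function $\psi'_l$ plus the constant $z\T d/D$, which does not depend on $l$.
\item $\psi'_l$ has width less than one on $X$, and hence on $R$.
\item Because $w$ is the same for every $l$, the error term and the constant cancel in differences. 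Since $f_{l+1}-f_l$ is the indicator of $a=l$, this gives $\psi'_{l+1}-\psi'_l=x_{i,j,l}$ identically on $X$.
\end{itemize}
The CG cuts along $\pm\psi'_l$ fix every $\psi'_l$ to an integer on their intersection with $R$, if it is nonempty. Hence every free variable is fixed to $0$ or $1$, and Lemma~\ref{lem:circuit}(b) finishes as you planned. This is the paper's argument; multipliers $b\ne1$ are not needed.
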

\begin{proof}
Let $R$ be a rational subpolyhedron of $X$. We may assume $R\ne\emptyset$.
We find integral linear functions along which $R$ has width less than one, such that every free variable is the difference of two of them.
Then CG cuts along these functions fix all free variables, and hence, by Lemma~\ref{lem:circuit}(b), leave at most one integral vector.

We obtain these functions from affine functions, defined separately for each $i$ and $j$.
Fix $i$ and $j$.
Most objects defined below depend on $i$ and $j$, but until the last paragraph we do not show this in the notation.
By Lemma~\ref{lem:dualcover}, there is $w\in M^*$ with $\|w-e_i/p_j\|_1\le m^{3/2}/(2\lambda_1(M))<1/(p_j(2^h-1))$.
Let $w'=w-e_i/p_j$.
For $l=0,\dots,p_j$, let $f_l(a)=a/p_j$ for $a<l$ and $f_l(a)=a/p_j-1$ for $a\ge l$, and let
\[
\psi_l(x)=\sum_{a=0}^{p_j-1}f_l(a)\,x_{i,j,a}+w'^{\mathsf T}c(x).
\]
The functions $f_l$ have three properties: $f_l(a)-a/p_j$ is an integer, $f_{l+1}(a)-f_l(a)$ is $1$ if $a=l$ and $0$ otherwise, and the values of $f_l$ lie in an interval of length less than one.
For instance, for $p_j=3$, the vectors $(f_l(0),f_l(1),f_l(2))$ for $l=0,1,2,3$ are $(-1,-\frac23,-\frac13)$, $(0,-\frac23,-\frac13)$, $(0,\frac13,-\frac13)$, and $(0,\frac13,\frac23)$.
We use them, in turn, to show that, for every $x\in X$, each $\psi_l(x)$ equals an integral linear function of $x$ plus a constant, that consecutive differences of these integral linear functions are the free variables, and that $R$ has width less than one along them.

We first show that, for every $x\in X$, each $\psi_l(x)$ equals an integral linear function of $x$ plus a constant that does not depend on $l$.
By Lemma~\ref{lem:qarydual}, $Dw=H\T z+Dz'$ for some $z\in\Z^{m'}$ and $z'\in\Z^m$.
By Lemma~\ref{lem:crtcircuit}(a), for every $x\in X$, we have $w\T c(x)=z\T Hc(x)/D+z'^{\mathsf T}c(x)=z\T d/D+z\T\zeta(x)+z'^{\mathsf T}c(x)$, where $\zeta=(\zeta_1,\dots,\zeta_{m'})$, and $z\T\zeta+z'^{\mathsf T}c$ is an integral linear function, since the entries of $\zeta$ and of $c$ are.
By Lemma~\ref{lem:crtcircuit}(a) again, for every $x\in X$,
\[
\frac{c_i(x)}{p_j}=\frac{V_i(x)-\Pi\kappa_i(x)}{p_j}=\sum_{a}\frac{\iota_ja}{p_j}\,x_{i,j,a}+\sum_{j'\ne j}\sum_a\frac{\iota_{j'}a}{p_j}\,x_{i,j',a}-\frac \Pi{p_j}\,\kappa_i(x).
\]
We have $\iota_{j'}\equiv0\pmod{p_j}$ for $j'\ne j$, the integer $p_j$ divides $\Pi$, and $\iota_j-1$ is a multiple of $p_j$.
Hence $c_i(x)/p_j=\sum_a(a/p_j)\,x_{i,j,a}+\omega(x)$ for every $x\in X$, with $\omega$ an integral linear function.
Since $e_i\T c(x)=c_i(x)$, for every $x\in X$ we obtain
\[
\psi_l(x)=\sum_a\big(f_l(a)-a/p_j\big)\,x_{i,j,a}-\omega(x)+w\T c(x)=\psi'_l(x)+z\T d/D,
\]
where $\psi'_l$ is an integral linear function, since $f_l(a)-a/p_j\in\{0,-1\}$, and $z\T d/D$ does not depend on $l$.
Moreover, $\psi'_{l+1}-\psi'_l=x_{i,j,l}$ for $l=0,\dots,p_j-1$, by the second property of the functions $f_l$.

Next we show that $R$ has width less than one along each function $\psi'_l$.
For every $x\in X$, the vector $(x_{i,j,a})_a$ lies in the standard simplex, so the first term of $\psi_l(x)$ lies in $[\min_af_l(a),\max_af_l(a)]$.
This interval has length at most $1-1/p_j$.
Indeed, its endpoints are $l/p_j-1$ and $(l-1)/p_j$ for $1\le l\le p_j-1$, and they are $-1$ and $-1/p_j$, or $0$ and $(p_j-1)/p_j$, for $l=0$ or $l=p_j$, respectively.
By Lemma~\ref{lem:crtcircuit}(a), $c(x)\in[0,2^h-1]^m$ for every $x\in X$, so $w'^{\mathsf T}c(x)$ varies by at most $\|w'\|_1(2^h-1)<1/p_j$ as $x$ ranges over $X$.
Hence, since $R\subseteq X$ and $\psi_l(x)-\psi'_l(x)=z\T d/D$ for every $x\in X$, $\width_R(\psi'_l)\le\width_{X}(\psi'_l)=\width_{X}(\psi_l)<1$.

Finally, we show that CG cuts along these functions fix all free variables.
Let $R_0$ be the intersection of $R$ with the CG cuts $\psi'_{i,j,l}\le\fl{\delta_R(\psi'_{i,j,l})}$ and $\psi'_{i,j,l}\ge-\fl{\delta_R(-\psi'_{i,j,l})}$ for all $i$, $j$, and $l$.
If $R_0\ne\emptyset$, then every $\psi'_{i,j,l}$ takes the same integral value at every vector of $R_0$, since the bounds $\fl{\delta_R(\psi'_{i,j,l})}$ and $-\fl{\delta_R(-\psi'_{i,j,l})}$ differ by an integer at most $\width_R(\psi'_{i,j,l})<1$.
Hence every free variable $x_{i,j,l}=\psi'_{i,j,l+1}-\psi'_{i,j,l}$ takes the same integral value at every vector of $R_0$, that is, $0$ or $1$.
By Lemma~\ref{lem:circuit}(b), $R_0$ is empty or a single integral vector, which then lies in $R$.
Thus $R_0\subseteq R_I$.
Since also $R_I\subseteq R'\subseteq R_0$, these three sets are equal, so $R$ has Chv\'atal rank at most one.
\end{proof}

\begin{proof}[Proof of Proposition~\ref{prop:satlattice}]
Let $X=X(p,h,H,D,t+(2^{h-1}-1)\1)$, and let $F=F(p,h,H,D,t)$ be the formula given by Lemma~\ref{lem:exactone} for $X$.
By Lemmas~\ref{lem:exactone} and~\ref{lem:crtcircuit}(c), its clauses have at most three literals, and it can be computed in the stated time.

(a)
By Lemma~\ref{lem:exactone}, the satisfying assignments of $F$ correspond one-to-one to the integral vectors of $X$.
By Lemma~\ref{lem:crtcircuit}(b), these correspond one-to-one to the vectors $c\in(t+(2^{h-1}-1)\1+M)\cap\{0,\dots,2^h-1\}^m$.
Writing $c=t-\tilde y+(2^{h-1}-1)\1$, these are the vectors $\tilde y\in M$ with $t-\tilde y\in\mathcal B_h$.

(b)
By Proposition~\ref{thm:crt}, every rational subpolyhedron of $X$ has Chv\'atal rank at most one, and so, by Lemma~\ref{lem:exactone}, does every rational subpolyhedron of $P(F)$.
By (a), the satisfying assignments of $F$ correspond to the vectors $\tilde y\in M$ with $t-\tilde y\in\mathcal B_h$.
For two such vectors $\tilde y$ and $\tilde y'$, we have $\tilde y-\tilde y'\in M$ and $\|\tilde y-\tilde y'\|_\infty\le2^h-1$, since $\mathcal B_h$ has $2^h$ consecutive integers in each coordinate.
Hence, since $p_{\max}\ge2$, we have $\|\tilde y-\tilde y'\|\le\sqrt m(2^h-1)\le m^{3/2}p_{\max}(2^h-1)/2<\lambda_1(M)$, and so $\tilde y=\tilde y'$.

(c)
Let $\Pi=p_1\cdots p_s$, and let $\mathcal T$ be the family of Lemma~\ref{lem:crtcircuit}(c), so that $X=X(\mathcal T)$.
By Lemma~\ref{lem:crtcircuit}(c), $X$ has $O\big(m(\sum_jp_j)\log\Pi+m'mh\log D\big)$ functional constraints.
By Lemma~\ref{lem:arith}, each of them has at most three inputs and at most two outputs, so it adds at most ten variables, at most six exact-one equations of at most nine literals each, and at most two fixings.
The $m\sum_jp_j$ free variables lie in multiple-choice constraints with $m\sum_jp_j$ literals in total.
Hence, since $\log\Pi\ge\log2$, the number of variables of $X$ plus $\sum_{T\in\mathcal T}|T|$ is $O\big(m(\sum_jp_j)\log\Pi+m'mh\log D\big)$, and (c) follows from Lemma~\ref{lem:exactone}.
\end{proof}

\section{Hardness for SAT polytopes}\label{sec:sathard}

In this section we reduce worst-case lattice problems and learning with errors to satisfiability under the promise $P(F)'=P(F)_I$, using the formulas of Proposition~\ref{prop:satlattice}.
This proves Theorems~C and~D, through Theorems~\ref{thm:satbdd} and~\ref{thm:satlwe}, and gives their consequences for lattice problems (Corollaries~\ref{cor:satusvp} and~\ref{cor:satregev}).

\subsection{Hardness from worst-case lattice problems}\label{sec:satbdd}

To solve BDD, we make the lattice integral (Lemma~\ref{lem:integralize}), we try error bounds that are powers of two, and we find the closest vector coordinate by coordinate (Theorem~\ref{thm:satbdd}).
Combined with reductions of Lyubashevsky and Micciancio, this gives consequences for uSVP and GapSVP (Corollary~\ref{cor:satusvp}).

To make the lattice integral, we scale a basis and round its entries.
The argument is standard, and we include it because we need explicit bounds.

\begin{lemma}\label{lem:integralize}
There is a deterministic polynomial-time algorithm that, given a basis $B\in\Q^{n\times n}$ of a lattice $L$, a target $t\in\Q^n$, and a rational $\nu>\sqrt n$, computes a basis $B'$ of $L$, a vector $v_0\in L$, a rational $S>0$, a nonsingular matrix $M_0\in\Z^{n\times n}$, and $t'\in\Z^n$, of encoding size polynomial in that of $(B,t,\nu)$, with the following properties.
We have $\nu(3/2)^n\le S\lambda_1(L)\le\nu(3/2)^n2^{(n+1)/2}$ and $\lambda_1(M_0\Z^n)\ge(1-\sqrt n/\nu)\,S\lambda_1(L)$.
Moreover, let $\rho=\operatorname{dist}(t,L)$, let $v^*\in L$ with $\|t-v^*\|=\rho$, and let $y^*\in\Z^n$ with $v^*=v_0+B'y^*$.
Then $e^*=t'-M_0y^*$ is an integral vector with $\|e^*\|_\infty\le(1+1/\nu)S\rho+\frac12+\frac12(3/2)^n$.
\end{lemma}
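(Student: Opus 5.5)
We will LLL-reduce the basis, use the LLL guarantee to pick the scaling, and then round. First we run the LLL algorithm on $B$ to get a reduced basis $B'=(b'_1,\dots,b'_n)$ of $L$. The standard guarantee (with $\delta=3/4$) gives $\|b'_1\|\le2^{(n-1)/2}\lambda_1(L)$ and $\lambda_1(L)\le\|b'_1\|$. So $\|b'_1\|$ determines $\lambda_1(L)$ up to a factor $2^{(n-1)/2}$. We then choose $S$ as a rational with $S\|b'_1\|$ between $\nu(3/2)^n2^{(n-1)/2}$ and $\nu(3/2)^n2^{(n+1)/2}$. This is a power-of-two adjustment of an approximation to $\nu(3/2)^n2^{(n-1)/2}/\|b'_1\|$, since $\|b'_1\|$ may be irrational. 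This gives $\nu(3/2)^n\le S\lambda_1(L)\le\nu(3/2)^n2^{(n+1)/2}$.

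Next we set $M_0=\lfloor SB'\rceil$, rounding entrywise. We need $M_0$ nonsingular and $\lambda_1(M_0\Z^n)$ large. For $y\in\Z^n\setminus\{\0\}$ we write $M_0y=SB'y+Ey$, where $E$ has entries in $[-\frac12,\frac12]$, so $\|Ey\|\le\frac{\sqrt n}2\|y\|_1$.

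The key step is to bound $\|y\|_1$ in terms of $\|B'y\|$, using the LLL reduction. Since the Gram--Schmidt coefficients satisfy $|\mu_{ij}|\le\frac12$, a standard bound gives $|y_i|\le(3/2)^{n-i}\|B'y\|/\|b'^*_i\|$. Combined with $\|b'^*_i\|\ge2^{-(i-1)/2}\|b'_1\|$, this yields a bound of the form $\|y\|_1\le c_n\|B'y\|/\lambda_1(L)$. This is the main obstacle: the constant $c_n$ has to come out as roughly $2(3/2)^n$ so that it is absorbed by the choice of $S$. If the naive bound is too large, we will instead bound $|y_i|$ through the dual basis vectors of the reduced basis, or absorb the extra factor into the $2^{(n+1)/2}$ slack.

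Given such a bound, $\|Ey\|\le\frac{\sqrt n}2c_n\|B'y\|/\lambda_1(L)\le(\sqrt n/\nu)\,S\|B'y\|$ by the choice of $S$. Hence $\|M_0y\|\ge(1-\sqrt n/\nu)S\|B'y\|\ge(1-\sqrt n/\nu)S\lambda_1(L)>0$, which gives both nonsingularity and the bound on $\lambda_1(M_0\Z^n)$.

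For the target, we use Babai rounding to choose $v_0=B'\lfloor B'^{-1}t\rceil\in L$. We then set $t'=\lfloor S(t-v_0)\rceil$, which is integral.

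Write $v^*=v_0+B'y^*$, so that $e^*=t'-M_0y^*=\big(S(t-v^*)\big)+\big(t'-S(t-v_0)\big)-Ey^*$. The first term has $\ell_\infty$-norm at most $S\rho$ and the rounding term at most $\frac12$. For the last term we apply the $\ell_1$ bound to $y^*$. Here $\|B'y^*\|=\|v^*-v_0\|\le\rho+\|t-v_0\|$, and the Babai round-off error satisfies $\|t-v_0\|\le\frac12\sum_i\|b'_i\|$. This gives $\|Ey^*\|_\infty\le\frac12\|y^*\|_1$, which splits into a part proportional to $\rho$, bounded by $(S/\nu)\rho$, and a part independent of $\rho$. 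We must show the second part is at most $\frac12(3/2)^n$. That is, the coefficient vector of $t-v_0$ should have $\ell_1$-norm at most $(3/2)^n$ in suitable units. We would arrange this by using the nearest-plane variant, for which the error's coordinates along the Gram--Schmidt vectors are at most $\frac12$, and then converting to coefficients via the same triangular bound with factors $(3/2)^{n-i}$. Summing the three terms gives the stated inequality. Encoding sizes are polynomial because LLL and the rational approximations are polynomial.
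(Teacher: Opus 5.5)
Your plan has the same architecture as the paper's proof: LLL with parameter $3/4$, a power-of-two scale $S$, entrywise rounding of $SB'$ and $S(t-v_0)$, back-substitution with $|\mu_{kl}|\le\frac12$, and the three-term split of $e^*$. The gap is the step you flag as the main obstacle: it is left open, and the route you sketch for it does not work.

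Write $b_k^\circ$ for the Gram--Schmidt vectors of $B'$ and $\eta_k$ for the Gram--Schmidt coordinates of $B'y$. Your per-coordinate bound $|y_i|\le(3/2)^{n-i}\|B'y\|/\|b_i^\circ\|$ is false. Back-substitution $y_i=\eta_i-\sum_{l>i}\mu_{li}y_l$ brings in $\eta_l$ with $|\eta_l|\le\|B'y\|/\|b_l^\circ\|$ for $l>i$. In an LLL-reduced basis, $\|b_l^\circ\|$ can be smaller than $\|b_i^\circ\|$ by a factor $2^{-(l-i)/2}$. For a concrete counterexample, take all $\mu_{lk}=-\frac12$ and $\|b_{k+1}^\circ\|^2=\frac12\|b_k^\circ\|^2$, which is LLL-reduced with the Lov\'asz condition tight. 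The vector $y$ with $B'y=b_n^\circ$ then has $|y_1|=\frac12(3/2)^{n-2}2^{(n-1)/2}\|B'y\|/\|b_1^\circ\|$, which exceeds your bound for $n\ge5$.

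What is true is a uniform statement: if $|\eta_l|\le\xi$ for all $l$, then $|y_k|\le\xi(3/2)^{n-k}$. Hence $\|y\|_1\le2(3/2)^n\|B'y\|/\tau$ with $\tau=\min_k\|b_k^\circ\|\le\lambda_1(L)$. Because $\tau$ may be smaller than $\lambda_1(L)$, this does not give $\|y\|_1\le c_n\|B'y\|/\lambda_1(L)$ with $c_n\approx2(3/2)^n$. The available bound $\tau\ge2^{-(n-1)/2}\lambda_1(L)$ only yields $c_n=2(3/2)^n2^{(n-1)/2}$. Against that, the guarantee $S\lambda_1(L)\ge\nu(3/2)^n$ is too weak by exactly $2^{(n-1)/2}$.

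The fix is to keep $\lambda_1(L)$ out of this step, and your choice of $S$ already suffices. LLL gives $\|b'_1\|\le2^{(k-1)/2}\|b_k^\circ\|$ for every $k$, so $\|b'_1\|\le2^{(n-1)/2}\tau$. Your calibration $S\|b'_1\|\ge\nu(3/2)^n2^{(n-1)/2}$ therefore yields $S\ge\nu(3/2)^n/\tau$. It follows that $\|Ey\|\le\sqrt n(3/2)^n\|B'y\|/\tau\le(\sqrt n/\nu)S\|B'y\|$. This is your ``absorb into the slack'' option made precise. The paper instead calibrates $S$ to $\tau$ directly: $S=\nu(3/2)^n2^{k_0}$ with $2^{k_0}\in[1/\tau,2/\tau)$.

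For the target, do not bound $\|y^*\|_1$ through $\|B'y^*\|\le\rho+\|t-v_0\|$. The ratio $\|t-v_0\|/\tau$ can be arbitrarily large, since the later $b_k^\circ$ can be long, so no $\rho$-free term comes out of norms. Split the coefficient vector instead: $y^*=B'^{-1}(t-v_0)-B'^{-1}(t-v^*)$.
\begin{itemize}
\item The second piece has $\ell_1$-norm at most $2(3/2)^n\rho/\tau\le2S\rho/\nu$, by the uniform bound.
\item With nearest plane, the first piece has $\ell_1$-norm at most $(3/2)^n$, by the uniform bound with $\xi=\frac12$ on the Gram--Schmidt coordinates, as in the paper.
\item With your original round-off choice, the first piece has $\ell_1$-norm at most $n/2$, since then $B'^{-1}(t-v_0)\in[-\frac12,\frac12]^n$.
\end{itemize}
Either way $\frac12\|y^*\|_1\le\frac12(3/2)^n+S\rho/\nu$, which gives the stated bound on $\|e^*\|_\infty$.
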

\begin{proof}
We first compute $B'$, $S$, and $v_0$, with the LLL algorithm with parameter $3/4$ \cite{LLL82} and the nearest plane algorithm \cite{Babai86}, as presented in \cite[Chapter~2]{MicciancioGoldwasser02}.
The LLL algorithm computes a basis $B'$ of $L$ with Gram--Schmidt vectors $b_1^\circ,\dots,b_n^\circ$ and coefficients $\mu_{kl}$ with $|\mu_{kl}|\le\frac12$ for $l<k$.
Its first vector $b'_1$ satisfies $\|b'_1\|\le2^{(k-1)/2}\|b_k^\circ\|$ for every $k$.
Let $\tau=\min_k\|b_k^\circ\|$.
Every nonzero vector $B'y$ of $L$ satisfies $\|B'y\|\ge|y_k|\,\|b_k^\circ\|\ge\tau$ for the largest $k$ with $y_k\ne0$, so $\tau\le\lambda_1(L)$.
Hence $\tau\le\lambda_1(L)\le\|b'_1\|\le2^{(n-1)/2}\tau$.
Since $\tau$ may be irrational, we use a power of two in place of $1/\tau$.
We compute the smallest integer $k_0$ with $4^{k_0}\tau^2\ge1$, which is possible since $\tau^2$ is rational, and we let $S=\nu(3/2)^n2^{k_0}$.
Let $S_0=\nu(3/2)^n/\tau$.
Since $2^{k_0}\ge1/\tau>2^{k_0-1}$, we have $S_0\le S<2S_0$.
By the nearest plane algorithm, we compute $v_0\in L$ such that $t_0=t-v_0=\sum_k\beta_kb_k^\circ$ with $|\beta_k|\le\frac12$.

Next we bound the coefficients of lattice vectors in the basis $B'$.
We need this because rounding $SB'$ changes $SB'y$ by a vector of norm at most proportional to $\|y\|_1$.
For $y\in\R^n$, write $B'y=\sum_k\eta_kb_k^\circ$ with $\eta_k=y_k+\sum_{l>k}\mu_{lk}y_l$.
Then $|\eta_k|\le\|B'y\|/\|b_k^\circ\|$.
If $|\eta_l|\le\xi$ for all $l$, the recursion $|y_k|\le|\eta_k|+\frac12\sum_{l>k}|y_l|$ gives, by induction from $k=n$ down, $|y_k|\le\xi(3/2)^{n-k}$, and hence $\|y\|_1\le2\xi(3/2)^n$.
Applied with $\xi=\|B'y\|/\tau$, this gives $\|y\|_1\le2(3/2)^n\|B'y\|/\tau$ for every $y$.
For $y=y^*$, we have $B'y^*=t_0-(t-v^*)$, so $\eta_k=\beta_k-(t-v^*)\T b_k^\circ/\|b_k^\circ\|^2$ and $|\eta_k|\le\frac12+\rho/\tau$.
Hence $\|y^*\|_1\le(1+2\rho/\tau)(3/2)^n$.

Finally, we round, and we prove the bounds of the statement.
Let $M_0=\lfloor SB'\rceil$ and $t'=\lfloor St_0\rceil$, rounding entrywise, and let $E_0=M_0-SB'$, whose entries lie in $[-\frac12,\frac12]$.
For $y\in\Z^n\setminus\{\0\}$, we have $\|E_0y\|\le\frac{\sqrt n}2\|y\|_1\le\sqrt n(3/2)^n\|B'y\|/\tau=(\sqrt n/\nu)S_0\|B'y\|\le(\sqrt n/\nu)S\|B'y\|$.
Hence $\|M_0y\|\ge(1-\sqrt n/\nu)S\|B'y\|\ge(1-\sqrt n/\nu)S\lambda_1(L)>0$.
In particular, $M_0$ is nonsingular, and $\lambda_1(M_0\Z^n)\ge(1-\sqrt n/\nu)S\lambda_1(L)$.
Moreover, $e^*=t'-M_0y^*$ is integral, since $t'$, $M_0$, and $y^*$ are, and $e^*=(t'-St_0)+S(t-v^*)-E_0y^*$, so $\|e^*\|_\infty\le\frac12+S\rho+\frac12\|y^*\|_1\le\frac12+S\rho+\frac12(3/2)^n+(3/2)^n\rho/\tau$, which gives the stated bound, since $(3/2)^n\rho/\tau=S_0\rho/\nu\le S\rho/\nu$.
The bounds on $S\lambda_1(L)$ follow from $S_0\tau=\nu(3/2)^n$, $S_0\le S<2S_0$, and $\tau\le\lambda_1(L)\le2^{(n-1)/2}\tau$.
The algorithm runs in polynomial time, since the LLL and nearest plane algorithms do, and the encoding sizes are polynomial, since those of $B'$, $v_0$, and $\tau^2$, and hence of $k_0$, $S$, $M_0$, and $t'$, are.
\end{proof}

\begin{theorem}\label{thm:satbdd}
BDD$_{1/(4n^{5/2})}$ can be solved in deterministic polynomial time with an oracle for satisfiability under the promise that the query is a thin formula $F(p,h,H,D,t)$.
Every query is a formula $F(p,h,H,D,t)$ of Proposition~\ref{prop:satlattice}.
\end{theorem}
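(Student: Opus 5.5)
We follow the structure of the proof of Theorem~\ref{thm:bdd}, with the thin formulas of Proposition~\ref{prop:satlattice} in place of thin parallelepipeds.

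\textbf{Step 1: make the lattice integral.} Given $(B,t)$ of encoding size $\sigma$, apply Lemma~\ref{lem:integralize} with a rational $\nu$ such as $\nu=2n$. This gives $B'$, $v_0$, $S$, $M_0$, and $t'$. Let $D=|\det M_0|$. Then $M_0\Z^n=\Lambda_D^\perp(H)$ for some $H\in\{0,\dots,D-1\}^{n\times n}$: for instance, take $H\equiv D M_0^{-1}\pmod D$, which is integral by the adjugate formula. Here $\log D$ is polynomial in $\sigma$.

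\textbf{Step 2: parameters and thinness.} For the moduli, take $p$ to be the first $s$ primes with $p_1\cdots p_s\ge 2^h$. Since $h=\poly(\sigma)$, the prime number theorem gives $p_{\max}=O(h\log h)$ and $\sum_jp_j$ polynomial. To be careful we can allow $p_{\max}$ up to a fixed polynomial in $\sigma$, but the factor $p_{\max}$ in the thinness condition must be absorbed somewhere.

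The key point is that the error bound $2^h$ relative to $S\lambda_1(L)$ is what matters. By Lemma~\ref{lem:integralize}, $e^*=t'-M_0y^*$ satisfies $\|e^*\|_\infty\le(1+1/\nu)S\rho+\tfrac12+\tfrac12(3/2)^n$. We also have $\lambda_1(M_0\Z^n)\ge(1-\sqrt n/\nu)S\lambda_1(L)\ge\nu(3/2)^n/2$. The additive $(3/2)^n$ term is dominated by choosing $\nu$ a large polynomial times $n$. The thinness requirement is
\[
\lambda_1>n^{3/2}p_{\max}(2^h-1)/2 .
\]
Under the BDD promise, $\rho\le\lambda_1(L)/(4n^{5/2})$. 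To absorb $p_{\max}$, I would enlarge the scaling rather than shrink the moduli: replace $\nu$ by $\nu\cdot p_{\max}$-type factors.

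I expect this parameter bookkeeping to be the main obstacle. The BDD factor $n^{5/2}$ leaves only a factor $n$ of room beyond the $n^{3/2}$ from Lemma~\ref{lem:dualcover}. So the small moduli must contribute only a constant, plus a factor absorbed by the slack between $\rho$ and $2^h$. A first attempt would handle this with a doubling search over $h$, as in Theorem~\ref{thm:bdd}. The smallest $h$ whose formula is satisfiable has $2^{h-1}\lesssim\|e^*\|_\infty\le\sqrt n\,\|e^*\|$-type bounds. I would pick the residue encoding with $p_{\max}$ as small as possible; if needed, I would use moduli that are powers of small primes, such as $p=(2^a,3^b,\dots)$, to keep $p_{\max}$ near $2^{h/s}$.

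\textbf{Step 3: queries in increasing order of $h$.} For $h=1,2,\dots,h_{\max}$ with $h_{\max}=\poly(\sigma)$, query $F(p,h,H,D,t')$. By Proposition~\ref{prop:satlattice}(a), it is satisfiable iff some $\tilde y\in M_0\Z^n$ has $t'-\tilde y\in\mathcal B_h$. As long as $h$ does not exceed the first $h^*$ with $e^*\in\mathcal B_{h^*}$, the query is thin, so its answer is correct and the satisfying vector must be $M_0y^*$ by uniqueness. The boxes are nested up to a shift, so the same argument as in Theorem~\ref{thm:bdd} goes through with a little care: either use symmetric containment, or query $\mathcal B_h$ centered so that $\mathcal B_h\subseteq\mathcal B_{h+1}$, which holds for $\{-(2^{h-1}-1),\dots,2^{h-1}\}$.

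\textbf{Step 4: recover $y^*$ bit by bit.} At the first satisfiable $h$, recover the entries of $y^*$ one bit at a time. Bound each entry by an interval computed by linear programming. Then, as in Theorem~\ref{thm:bdd}, query formulas for the sublattice basis $M_0^{(i,k)}$, where column $i$ is multiplied by $2^k$, with target $t'-(c_i+u)M_0e_i$. This sublattice is again of the form $\Lambda_{D'}^\perp(H')$ with $D'=2^kD$, and it has larger $\lambda_1$, so these queries are also thin. Finally output $v_0+B'y^*$. Every query is a formula $F(p,h,H,D,t)$, and all encoding sizes are polynomial in $\sigma$.
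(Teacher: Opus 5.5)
Your architecture matches the paper's: integralize with Lemma~\ref{lem:integralize}, write $M_0\Z^n=\Lambda_D^\perp(H)$ via the adjugate, query $h=1,2,\dots$ in increasing order, and recover $y^*$ bit by bit with the sublattices $M_0S_{i,k}\Z^n$. However, the step that actually produces the factor $1/(4n^{5/2})$ is left open: showing that every query with $h\le h^*$ is thin. The fix you propose for it cannot work.

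Enlarging $\nu$ does not absorb $p_{\max}$. Both $\lambda_1(M)\approx S\lambda_1(L)$ and $\|e^*\|_\infty\approx S\rho$ scale linearly with $S$. So the condition $\lambda_1(M)>n^{3/2}p_{\max}(2^{h^*}-1)/2$, with $2^{h^*}-1<4\|e^*\|_\infty$, amounts to roughly $\rho<\lambda_1(L)/(2n^{3/2}p_{\max})$, whatever $\nu$ is. A large $\nu$ only makes the additive rounding term $\frac12+\frac12(3/2)^n$ negligible. Even for that, $\nu=2n$ is far too small: you need $\nu\gg n^{3/2}p_{\max}$, and the paper takes $\nu=n^4$. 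Hence you really need $p_{\max}$ below about $2n$, and your estimate $p_{\max}=O(h\log h)$, with $h=\poly(\sigma)$, gives no such bound.

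The moduli also cannot contribute only a constant, as you suggest. Pairwise coprime moduli at most $P$ have product at most $\operatorname{lcm}(1,\dots,P)=2^{O(P)}$, so $p_{\max}=\Omega(h)$. Moreover, $h^*$ can be of order $n$ because of the $(3/2)^n$ and $2^{(n+1)/2}$ terms. The budget $n^{5/2}$ is exactly $n^{3/2}$ times a $p_{\max}$ of order $n$, so the constants matter.

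The missing observation is that $h$ need not grow with $\sigma$. Lemma~\ref{lem:integralize} normalizes the scale, $S\lambda_1(L)\le\nu(3/2)^n2^{(n+1)/2}$. So under the promise, $\|e^*\|_\infty\le2S\rho+1+(3/2)^n=2^{O(n)}$ for polynomial $\nu$, and one can stop at $h_{\max}=O(n)$. The paper does this as follows.
\begin{itemize}
\item It takes the smallest $h_{\max}$ with $2^{h_{\max}}\ge8n^4(3/2)^n2^{(n+1)/2}+4(3/2)^n+8$, which gives $3h_{\max}\le4n$ for $n\ge150$. Smaller $n$ is handled by Kannan's algorithm.
\item It fixes $p$ to be the maximal prime powers at most $h_{\max}$, so Nair's inequality gives $\Pi\ge2^{h_{\max}}$ with $p_{\max}\le h_{\max}$.
\item The BDD$_{1/(4n^{5/2})}$ promise then implies $\rho\le\lambda_1(L)/(3n^{3/2}h_{\max})$.
\item Thinness follows from the chain $n^{3/2}p_{\max}(2^{h^*}-1)/2<2n^{3/2}h_{\max}\|e^*\|_\infty\le\bigl(\frac23(1+\frac1\nu)+\frac{2n^{3/2}h_{\max}}\nu\bigr)S\lambda_1(L)\le\lambda_1(M)$, using \eqref{eq:hmax}.
\end{itemize}
Your choice of the first primes would also work once $h^*=O(n)$ is established. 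But it needs the sharper estimate $p_{\max}\sim h\ln2$, not $O(h\log h)$, with explicit constants. Without this bookkeeping, your Step~3 claim that the queries up to $h^*$ are thin is unsupported, and the oracle could answer them incorrectly.
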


\begin{proof}
Let $B\in\Q^{n\times n}$ be a basis of a lattice $L$ and let $t\in\Q^n$, with total encoding size $\sigma$.
If $n<150$, we compute a vector of $L$ closest to $t$ with Kannan's algorithm \cite{Kannan87}, in time polynomial in $\sigma$, since $n$ is bounded by a constant.
So let $n\ge150$.

We use the following parameters.
Let $h_{\max}=h_{\max}(n)$ be the smallest integer with $2^{h_{\max}}\ge8n^4(3/2)^n2^{(n+1)/2}+4(3/2)^n+8$.
Then $h_{\max}\le n\log_2(3/\sqrt2)+4\log_2n+6$, where $\log_2(3/\sqrt2)<1.09$, and hence $3h_{\max}\le4n$, since $n\ge150$.
It follows that, since $2h_{\max}/n^{5/2}\le\frac83n^{-3/2}$,
\begin{equation}\label{eq:hmax}
\tfrac23(1+n^{-4})+2h_{\max}/n^{5/2}\le1-n^{-7/2}.
\end{equation}
We have $h_{\max}\ge7$.
For each prime at most $h_{\max}$, we take the largest power of this prime that is at most $h_{\max}$, and we let $p$ be the vector of these prime powers.
For instance, for $h_{\max}=10$, they would be $8=2^3$, $9=3^2$, $5$, and $7$.
These moduli are pairwise coprime, and their product $\Pi$ is $\operatorname{lcm}(1,\dots,h_{\max})\ge2^{h_{\max}}$, by Nair's inequality \cite{Nair82}.
Moreover, there are at most $h_{\max}$ moduli, each at most $h_{\max}$, so $p_{\max}\le h_{\max}$ and $\sum_jp_j\le h_{\max}^2$.

Let $\alpha_0=1/(3n^{3/2}h_{\max})$.
Since $3h_{\max}\le4n$, we have $1/(4n^{5/2})\le\alpha_0$, so it suffices to solve BDD$_{\alpha_0}$.
As in the proof of Theorem~\ref{thm:bdd}, we try the error bounds $2^{h-1}$ for $h=1,\dots,h_{\max}$, in increasing order.
The first formula reported satisfiable has a unique satisfying assignment, which determines the closest vector, and we find this vector bit by bit, with queries for thin formulas defined by sublattices.

We first make the lattice integral and construct the formulas.
Apply Lemma~\ref{lem:integralize} with $\nu=n^4$, and let $M=M_0\Z^n$ and $D=|\det M_0|$.
Then $\lambda_1(M)\ge(1-\sqrt n/\nu)\nu(3/2)^n>1$, so $M\ne\Z^n$ and $D\ge2$.
Let $H\in\{0,\dots,D-1\}^{n\times n}$ with $H\equiv\operatorname{adj}(M_0)\pmod D$.
Then $M=\Lambda_D^\perp(H)$, since $M_0^{-1}=\operatorname{adj}(M_0)/\det M_0$ and $|\det M_0|=D$, so $y\in M_0\Z^n$ if and only if $\operatorname{adj}(M_0)y\equiv\0\pmod D$.
For $h=1,\dots,h_{\max}$, let $F_h=F(p,h,H,D,t')$, with $m=m'=n$, which is possible by Proposition~\ref{prop:satlattice}, since the product $\Pi$ of the moduli satisfies $\Pi\ge2^{h_{\max}}\ge2^h$.
By Proposition~\ref{prop:satlattice}(a), the satisfying assignments of $F_h$ correspond to the vectors $\tilde y\in M$ with $t'-\tilde y\in\mathcal B_h$.

Next we describe the queries and bound the running time.
We ask the oracle whether $F_1,\allowbreak F_2,\allowbreak\dots,\allowbreak F_{h_{\max}}$ are satisfiable, until it reports that some $F_h$ is.
If it reports none, we output $v_0$.
Otherwise, for $i=1,\dots,n$, we compute $I_i=\{y_i: y\in\R^n,\ t'-M_0y\in[-(2^{h-1}-1),2^{h-1}]^n\}$ by linear programming, we let $c_i=\lceil\min I_i\rceil$, and we let $K_i$ be the smallest nonnegative integer with $2^{K_i}>\max I_i-c_i$.
For $k=1,\dots,K_i$, let $S_{i,k}$ be the diagonal matrix with $i$th diagonal entry $2^k$ and all other diagonal entries $1$, let $D_{i,k}=2^kD$, and let $H_{i,k}\in\{0,\dots,D_{i,k}-1\}^{n\times n}$ with $H_{i,k}\equiv\operatorname{adj}(M_0S_{i,k})\pmod{D_{i,k}}$, so that $M_0S_{i,k}\Z^n=\Lambda^\perp_{D_{i,k}}(H_{i,k})$, as for $M$, since $|\det(M_0S_{i,k})|=D_{i,k}$.
Starting from $u_{i,0}=0$, for $k=1,\dots,K_i$ in turn, we ask the oracle whether $F(p,h,H_{i,k},D_{i,k},t'-(c_i+u_{i,k-1})M_0e_i)$ is satisfiable, and we let $u_{i,k}=u_{i,k-1}$ if it reports that it is and $u_{i,k}=u_{i,k-1}+2^{k-1}$ otherwise.
We let $z_i=c_i+u_{i,K_i}$, and we output $v_0+B'z$.
Every query is a formula of Proposition~\ref{prop:satlattice}, namely some $F_h$ or some $F(p,h,H_{i,k},D_{i,k},t'-(c_i+u_{i,k-1})M_0e_i)$.
By Lemma~\ref{lem:integralize}, $\log D$ and the encoding sizes of $M_0$ and $t'$ are polynomial in $\sigma$.
The endpoints of $I_i$ are optimal values of linear programs, so they have encoding size polynomial in $\sigma$ \cite[Corollary~10.2a]{Schrijver86}, and so $K_i$ and $\log D_{i,k}=\log D+k$ are polynomial in $\sigma$.
Since also $\sum_jp_j\le h_{\max}^2$, every query can be computed in time polynomial in $\sigma$, by Proposition~\ref{prop:satlattice}.
Hence, on every input, the oracle is called at most $h_{\max}+\sum_iK_i$ times, on queries of encoding size polynomial in $\sigma$, and all other computations take time polynomial in $\sigma$.

We now show that the output is $v^*$ if the input satisfies the promise $\rho\le\alpha_0\lambda_1(L)$, where $\rho$, $v^*$, $y^*$, and $e^*$ are as in Lemma~\ref{lem:integralize}.
Let $h^*$ be the smallest $h\ge1$ with $2^{h-1}>\|e^*\|_\infty$.
Then $\|e^*\|_\infty\le2^{h^*-1}-1$, since $e^*$ is integral, and so $e^*\in\mathcal B_{h^*}$.
If $h^*=1$, then $e^*=\0$.
If $h^*\ge2$, then $2^{h^*-2}\le\|e^*\|_\infty$, and so $2^{h^*}-1<4\|e^*\|_\infty$.

We start by showing that $h^*\le h_{\max}$.
By Lemma~\ref{lem:integralize}, $\|e^*\|_\infty\le2S\rho+1+(3/2)^n$, and, by the promise, $S\rho<S\lambda_1(L)\le\nu(3/2)^n2^{(n+1)/2}$.
Hence $2^{h^*}\le4\|e^*\|_\infty+2<8n^4(3/2)^n2^{(n+1)/2}+4(3/2)^n+6<2^{h_{\max}}$.

Next we show that all queries made for $F_h$ with $h\le h^*$ satisfy the promise.
We claim that $F_{h^*}$ is thin, that is, $\lambda_1(M)>n^{3/2}p_{\max}(2^{h^*}-1)/2$.
By Lemma~\ref{lem:integralize}, $\lambda_1(M)\ge(1-\sqrt n/\nu)S\lambda_1(L)$ and $S\lambda_1(L)\ge\nu(3/2)^n$.
If $h^*=1$, the claim holds since $\lambda_1(M)\ge\frac12n^4(3/2)^n>\frac12n^{3/2}h_{\max}\ge\frac12n^{3/2}p_{\max}$.
If $h^*\ge2$, then
\begin{align*}
\frac{n^{3/2}p_{\max}(2^{h^*}-1)}2&<2n^{3/2}h_{\max}\|e^*\|_\infty\le2n^{3/2}h_{\max}\Big((1+\tfrac1\nu)S\rho+\tfrac12+\tfrac12(3/2)^n\Big)\\
&\le\Big(\tfrac23(1+\tfrac1\nu)+\tfrac{2n^{3/2}h_{\max}}{\nu}\Big)S\lambda_1(L),
\end{align*}
using $S\rho\le S\lambda_1(L)/(3n^{3/2}h_{\max})$ and $1+(3/2)^n\le2(3/2)^n\le\frac2\nu S\lambda_1(L)$.
Since $\nu=n^4$, the right-hand side is at most $(1-\sqrt n/\nu)S\lambda_1(L)\le\lambda_1(M)$, by \eqref{eq:hmax}.
Hence, for every $h\le h^*$, the formula $F_h$ is thin, since $\lambda_1(M)>n^{3/2}p_{\max}(2^h-1)/2$.
So are the formulas queried after $F_h$, which have the same $p$ and $h$, since their lattices $M_0S_{i,k}\Z^n$ are contained in $M$, and so $\lambda_1(M_0S_{i,k}\Z^n)\ge\lambda_1(M)$.
Hence the oracle answers all queries made for $F_h$ correctly.

Finally, we show that the procedure outputs $v^*$.
By Proposition~\ref{prop:satlattice}(a), the satisfying assignments of $F_{h^*}$ correspond to the vectors $\tilde y\in M$ with $t'-\tilde y\in\mathcal B_{h^*}$.
One of them is $M_0y^*$, since $t'-M_0y^*=e^*\in\mathcal B_{h^*}$, and so, by Proposition~\ref{prop:satlattice}(b), it is the only one.
For $h\le h^*$, we have $\mathcal B_h\subseteq\mathcal B_{h^*}$, so the satisfying assignments of $F_h$ correspond to vectors among these, that is, to $M_0y^*$ at most.
Thus, for $h\le h^*$, the formula $F_h$ has at most one satisfying assignment, and $F_{h^*}$ has one.
Hence the procedure stops at some $h\le h^*$, and $M_0y^*$ is the only vector $\tilde y\in M$ with $t'-\tilde y\in\mathcal B_h$.
Fix $i$ and $k$, and let $u=u_{i,k-1}$.
By Proposition~\ref{prop:satlattice}(a), the satisfying assignments of $F(p,h,H_{i,k},D_{i,k},t'-(c_i+u)M_0e_i)$ correspond to the vectors $M_0S_{i,k}x$ with $x\in\Z^n$ and $t'-(c_i+u)M_0e_i-M_0S_{i,k}x\in\mathcal B_h$.
Since $M_0S_{i,k}x+(c_i+u)M_0e_i=M_0y$ for $y_i=2^kx_i+c_i+u$ and $y_l=x_l$ for $l\ne i$, these correspond to the vectors $M_0y$ with $y\in\Z^n$, $y_i\equiv c_i+u\pmod{2^k}$, and $t'-M_0y\in\mathcal B_h$.
So this formula is satisfiable if and only if $y^*_i\equiv c_i+u\pmod{2^k}$.
Since $t'-M_0y^*\in\mathcal B_h$, $y^*_i$ is an integer in $I_i$, so $0\le y^*_i-c_i<2^{K_i}$, and induction on $k$ gives $u_{i,k}=(y^*_i-c_i)\bmod2^k$, so $z_i=y^*_i$.
Hence $z=y^*$, and the output is $v_0+B'y^*=v^*$.
\end{proof}

\begin{proof}[Proof of Theorem~C]
By Proposition~\ref{prop:satlattice}, every formula $F(p,h,H,D,t)$ has clauses with at most three literals, and, if it is thin, it satisfies $P(F)'=P(F)_I$.
By Theorem~\ref{thm:satbdd}, every query is such a formula.
Hence Theorem~\ref{thm:satbdd} gives Theorem~C.
\end{proof}

Replacing the oracle by an algorithm and combining Theorem~\ref{thm:satbdd} with Theorem~\ref{thm:lm}, we obtain the further consequences stated in the introduction.

\begin{corollary}\label{cor:satusvp}
Suppose that an algorithm solves satisfiability in polynomial time for formulas $F$ whose clauses have at most three literals, under the promise that every rational subpolyhedron of $P(F)$, and in particular $P(F)$ itself, has Chv\'atal rank at most one.
Then BDD$_{1/(4n^{5/2})}$ and uSVP$_{4n^{5/2}}$ can be solved in deterministic polynomial time, and, for $n\ge2$, GapSVP$_\gamma$ with $\gamma=4n^3/\sqrt{\log n}$ can be solved in randomized polynomial time.
If the algorithm is randomized, then these statements hold with randomized algorithms for the three problems.
\end{corollary}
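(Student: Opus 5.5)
The plan mirrors the proof of Corollary~\ref{cor:usvp}, with Theorem~\ref{thm:satbdd} in place of Theorem~\ref{thm:bdd}.

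First we check that the given algorithm can serve as the oracle of Theorem~\ref{thm:satbdd}. By Proposition~\ref{prop:satlattice}(b), every thin formula $F(p,h,H,D,t)$ has clauses with at most three literals, and every rational subpolyhedron of $P(F)$ has Chv\'atal rank at most one. So every query of Theorem~\ref{thm:satbdd} that satisfies that theorem's promise also satisfies the promise of the corollary, and the algorithm answers it correctly. Every query has encoding size polynomial in the encoding size $\sigma$ of $(B,t)$, and there are polynomially many of them. Hence replacing the oracle by the algorithm gives a deterministic algorithm for BDD$_{1/(4n^{5/2})}$ that runs in time polynomial in $\sigma$ on every input, whatever the algorithm answers on queries that violate the promise.

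Next we apply Theorem~\ref{thm:lm}. With $\gamma=4n^{5/2}$, which is polynomially bounded, it gives a deterministic reduction from uSVP$_{4n^{5/2}}$ to BDD$_{1/(4n^{5/2})}$. Composing this reduction with the algorithm above gives uSVP$_{4n^{5/2}}$ in deterministic polynomial time. As before, we first make the BDD algorithm always return a lattice vector, by replacing an output $v$ with $B^{-1}v\notin\Z^n$ by $\0$.

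For GapSVP we take $\gamma=4n^3/\sqrt{\log n}$. The BDD factor in Theorem~\ref{thm:lm} is then $\sqrt{n/\log n}/\gamma=1/(4n^{5/2})$, as required. We also need the hypothesis $\gamma>2\sqrt{n/\log n}$ for $n\ge2$. It holds because $4n^3>2\sqrt n$.

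For a randomized algorithm, we repeat the amplification argument of Corollary~\ref{cor:usvp} verbatim. Let $K$ be the polynomial number of queries. We answer each distinct query by the majority of $O(\log K)$ independent runs, and we cache the answer so a repeated query gets the same answer. Then each query satisfying the promise is answered correctly with probability at least $1-1/(12K)$, and by the union bound all such queries are correct with probability at least $11/12$. So BDD and uSVP succeed with probability at least $11/12$. GapSVP succeeds with probability at least $7/12$, and a constant number of majority votes raises this to $2/3$.

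The main obstacle is routine bookkeeping. One must confirm that the queries of Theorem~\ref{thm:satbdd}, although their number of variables depends on $\sigma$ rather than $n$, still have size polynomial in $\sigma$, so that a polynomial-time algorithm for the formulas gives a polynomial-time algorithm for the lattice problems. This follows from Proposition~\ref{prop:satlattice}(c), using $\sum_jp_j\le h_{\max}^2$ and the fact that $\log D$ is polynomial in $\sigma$.
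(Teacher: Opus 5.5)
Your proposal is correct and follows the paper's proof essentially verbatim. Proposition~\ref{prop:satlattice}(b) lets the algorithm serve as the oracle of Theorem~\ref{thm:satbdd}, and the rest is Theorem~\ref{thm:lm} with $\gamma=4n^{5/2}$ and $\gamma=4n^3/\sqrt{\log n}$, both giving BDD factor $1/(4n^{5/2})$, together with the amplification argument of Corollary~\ref{cor:usvp} (your extra checks of $\gamma>2\sqrt{n/\log n}$ and of the query sizes are details the paper leaves implicit).
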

\begin{proof}
By Proposition~\ref{prop:satlattice}(b), every thin formula satisfies this promise, so, as in the proof of Theorem~C, the algorithm can serve as the oracle in Theorem~\ref{thm:satbdd}.
The statements then follow as in the proof of Corollary~\ref{cor:usvp}, using Theorem~\ref{thm:lm} with $\gamma=4n^{5/2}$ and $\gamma=4n^3/\sqrt{\log n}$, for which the BDD factor is $1/(4n^{5/2})$.
\end{proof}

The factor $1/(4n^{5/2})$ is worse than the factor $1/(4n^2)$ of Theorem~\ref{thm:bdd} by a factor $\sqrt n$.
The loss comes from the moduli: the errors after Lemma~\ref{lem:integralize} can be as large as $2^{O(n)}$, so the largest modulus must be at least of order $n$, which costs a factor of order $n$ in the condition of Proposition~\ref{prop:satlattice}(b), partly offset by a factor of order $\sqrt n$ gained in Lemma~\ref{lem:dualcover}, which, unlike Lemma~\ref{lem:dualbasis}, does not need a basis of the dual lattice.
Moreover, unlike that of Theorem~\ref{thm:bdd}, the reduction of Theorem~\ref{thm:satbdd} does not preserve the dimension: the number of variables of its formulas depends on the encoding size of the input, through the determinant of the integral lattice given by Lemma~\ref{lem:integralize}, so Corollary~\ref{cor:satusvp} has no analog of the statement of Corollary~\ref{cor:usvp} on running times $2^{o(n)}$.

\subsection{Hardness from learning with errors}\label{sec:satlwe}

To solve decision-LWE, we use a single formula defined by the $q$-ary lattice $\Lambda_q(A\T)$.
This lattice is integral and the errors are polynomially bounded, so we apply Proposition~\ref{prop:satlattice} directly, with moduli of logarithmic size.
With high probability, this formula is thin, and so satisfies the promise, and it is satisfiable for LWE instances but not for uniform instances.
Thus a single call to an algorithm for satisfiability distinguishes the two cases (Theorem~\ref{thm:satlwe}).
Combined with a result of Regev, this gives consequences for GapSVP and SIVP for quantum algorithms (Corollary~\ref{cor:satregev}).
In this subsection, $n$ is the dimension of $s$, and also of the lattice problems in Theorem~\ref{thm:regev}, while the lattice $\Lambda_q(A\T)$ has dimension $m$.

To apply Proposition~\ref{prop:satlattice}, we write $\Lambda_q(A\T)$ in the form $\Lambda_q^\perp(H)$.

\begin{lemma}\label{lem:kernel}
Let $q$ be a prime, let $n<m$, and let $A\in\Z_q^{n\times m}$.
In time polynomial in $m$ and $\log q$, we can compute a matrix $H\in\{0,\dots,q-1\}^{m'\times m}$ with $1\le m'\le m$ and $\Lambda_q^\perp(H)=\Lambda_q(A\T)$.
\end{lemma}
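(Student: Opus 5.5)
The plan is to take $H$ to be a basis of the left kernel of $A\T$ over the field $\Z_q$, that is, a matrix whose rows span $\{h\in\Z_q^m: hA\T\equiv\0\}$, which is the same as $\{h: Ah\T\equiv\0\}$ with $h$ read as a row vector.
Since $q$ is prime, $\Z_q$ is a field, so Gaussian elimination modulo $q$ computes such a basis in time polynomial in $m$ and $\log q$.
Let the rank of $A$ over $\Z_q$ be $\kappa\le n<m$.
Then the kernel $\{v\in\Z_q^m: Av\equiv\0\}$ has dimension $m-\kappa\ge1$.
We let $m'=m-\kappa$, so that $1\le m'\le m$, and we let the rows of $H\in\{0,\dots,q-1\}^{m'\times m}$ be the basis vectors of this kernel.

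It then remains to check that $\Lambda_q^\perp(H)=\Lambda_q(A\T)$.
Both lattices contain $q\Z^m$, so it suffices to compare their images in $\Z_q^m$.
The image of $\Lambda_q(A\T)$ is the column space $\operatorname{im}(A\T)$ over $\Z_q$.
The image of $\Lambda_q^\perp(H)$ is $\ker H$.
The rows of $H$ span $\ker A$, which is the orthogonal complement of the row space of $A$, that is, of $\operatorname{im}(A\T)$, with respect to the standard bilinear form.
Hence $\ker H=(\ker A)^\perp=\operatorname{im}(A\T)$.
This uses the finite-field fact that $(W^\perp)^\perp=W$ for every subspace $W$ of $\Z_q^m$, which follows by dimension counting, since $\dim W^\perp=m-\dim W$ for the nondegenerate standard form.
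We do not need $W\cap W^\perp=\{\0\}$, which can fail over finite fields.

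No step is really an obstacle.
The only points to state carefully are that the dimension count gives $m'\ge1$, which is where $n<m$ is used, and that the double-orthogonal identity holds over $\Z_q$ even though the form is not anisotropic.
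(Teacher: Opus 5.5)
Your proposal is correct and matches the paper's proof. Both take the rows of $H$ to be a basis of $\ker A$ over the field $\Z_q$, computed by Gaussian elimination, get $m'\ge m-n\ge1$ from the rank bound, and conclude $\ker H=(\ker A)^\perp=\operatorname{im}(A\T)$ by double orthogonality over $\Z_q$; your extra remarks (comparing images in $\Z_q^m$ since both lattices contain $q\Z^m$, and that $(W^\perp)^\perp=W$ needs only the dimension count) just make explicit what the paper leaves implicit.
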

\begin{proof}
Since $q$ is prime, we can compute by Gaussian elimination over $\Z_q$ a matrix $H\in\{0,\dots,q-1\}^{m'\times m}$ whose rows form a basis of the kernel of $A$ over $\Z_q$.
This kernel has dimension at least $m-n\ge1$, so $1\le m'\le m$.
Over the field $\Z_q$, the orthogonal complement of this kernel is the row space of $A$.
Hence, for $y\in\Z^m$, we have $Hy\equiv\0\pmod q$ if and only if $y\equiv A\T s\pmod q$ for some $s\in\Z_q^n$, that is, $\Lambda_q^\perp(H)=\Lambda_q(A\T)$.
\end{proof}

For the error distribution $\bar\Psi_\alpha$ and suitable parameters, the formulas of Proposition~\ref{prop:satlattice} give an explicit distribution of hard instances.

\begin{theorem}\label{thm:satlwe}
Let $n$ be sufficiently large, let $q=q(n)$ be a prime in $[n^3,2n^3]$, which exists by Bertrand's postulate, let $k=\lceil\log_2q\rceil$, let $\alpha=1/(200n^{3/2}k^{5/2}\log n)$, and let $m=3nk$.
Let $h$ be the largest integer with $h2^h\le q/(4m^{3/2})$.
For each prime at most $h$, take the largest power of this prime that is at most $h$, and let $p$ be the vector of these prime powers.
For $A\in\Z_q^{n\times m}$ and $t\in\Z_q^m$, let $F(A,t)=F(p,h,H,q,t)$, where $H$ is the matrix given by Lemma~\ref{lem:kernel} for $A$, and let $\mathcal F$ be the distribution of $F(A,t)$ when $(A,t)$ is an LWE instance with $\chi=\bar\Psi_\alpha$ or a uniform instance, each with probability $1/2$.
The distribution $\mathcal F$ can be sampled in polynomial time, its samples are formulas with $O(n^2\log^4n)$ variables whose clauses have at most three literals, and a sample of $\mathcal F$ is thin with probability at least $1-2^{-2nk}$.
Suppose that a randomized polynomial-time algorithm decides satisfiability correctly with probability at least $1/2+n^{-c}$ for some constant $c>0$, over the choice of $F$ from $\mathcal F$ and its own randomness.
Then there is a randomized polynomial-time algorithm that solves decision-$\LWE_{n,q,\bar\Psi_\alpha}$ with $m$ samples.
\end{theorem}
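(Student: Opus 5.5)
The plan follows the proof of Theorem~\ref{cor:lwe}, with Proposition~\ref{prop:satlattice} in place of Propositions~\ref{cor:promise} and~\ref{thm:lwe}.

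\textbf{Parameters.} First fix the parameters and check the size bounds. We have $q\in[n^3,2n^3]$, $k=\Theta(\log n)$ and $m=3nk$, so $q/(4m^{3/2})=\Theta(n^{3/2}/\log^{3/2}n)$. Hence $h=\Theta(\log n)$, and in particular $h\ge7$ for large $n$. As in the proof of Theorem~\ref{thm:satbdd}, Nair's inequality gives $\Pi=\operatorname{lcm}(1,\dots,h)\ge2^h$. The moduli satisfy $p_{\max}\le h$ and $\sum_jp_j\le h^2$, and $\log\Pi=O(h)$.

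Proposition~\ref{prop:satlattice}(c), with $m'\le m$ and $D=q$, bounds the number of variables by
\[
O\big(mh^2\cdot h+m^2h\log q\big)=O(n\log n\cdot\log^3n+n^2\log^2n\cdot\log n\cdot\log n)=O(n^2\log^4n).
\]
Clauses have at most three literals by Proposition~\ref{prop:satlattice}. Sampling runs in polynomial time: find $q$ by primality testing, compute $H$ by Lemma~\ref{lem:kernel}, sample $\bar\Psi_\alpha$, and build $F$ by Proposition~\ref{prop:satlattice}. This uses $n<m$.

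\textbf{Thinness.} Lemma~\ref{lem:lambda1} gives $\lambda_1(\Lambda_q(A\T))>q/8$ with probability at least $1-q^n2^{-m}\ge1-2^{-2nk}$. So it suffices to check that
\[
m^{3/2}p_{\max}(2^h-1)/2\le m^{3/2}h2^h/2\le q/8,
\]
which is exactly the defining inequality $h2^h\le q/(4m^{3/2})$. Then Proposition~\ref{prop:satlattice}(b) makes $F$ thin, hence $P(F)'=P(F)_I$, and $F$ has at most one satisfying assignment.

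\textbf{Satisfiability on the two kinds of instances.} By Proposition~\ref{prop:satlattice}(a), $F(A,t)$ is satisfiable if and only if some $\tilde y\in\Lambda_q(A\T)$ has $t-\tilde y\in\mathcal B_h$.

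For an LWE instance, $t-e\in\Lambda_q(A\T)$. So $F$ is satisfiable whenever $\|e\|_\infty\le2^{h-1}-1$. By maximality of $h$ we have $(h+1)2^{h+1}>q/(4m^{3/2})$, so
\[
2^{h-1}=\Omega\big(q/(m^{3/2}h)\big)=\Omega\big(q/(n^{3/2}k^{5/2})\big).
\]
This should exceed $\alpha q\log n+1=q/(200n^{3/2}k^{5/2})+1$ with room for the constants. This matching of $\alpha$ with $h$ is the step I expect to need the most care, since the constants in $h$ versus $k$ and $\log m$ must be tracked. The Gaussian tail bound and a union bound, as in Theorem~\ref{cor:lwe}, then give failure probability at most $2m\exp(-\pi\log^2n)$.

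For a uniform instance, count as in Proposition~\ref{thm:lwe}. There are at most $q^n2^{hm}$ residues $A\T s+e'$ with $e'$ in a box of side $2^h$. Since $2^h\le q/(4m^{3/2}h)\le q/4$, the probability of satisfiability is at most $2^{m/3}4^{-m}\le2^{-m}$.

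\textbf{Reduction.} The distinguisher outputs $1$ if and only if the given algorithm reports $F(A,t)$ satisfiable. The advantage computation of Theorem~\ref{cor:lwe} then applies verbatim: with $\beta_1+\beta_0\le1-2n^{-c}$, the output probabilities differ by at least $2n^{-c}-\varepsilon_\chi-2^{-m}\ge n^{-c}$.
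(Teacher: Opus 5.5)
Your proposal is correct and follows the paper's proof essentially step for step: parameters via Nair's inequality and Proposition~\ref{prop:satlattice}(c), thinness from Lemma~\ref{lem:lambda1} and the defining inequality of $h$, satisfiability of LWE instances via the Gaussian tail, the counting bound $q^n2^{hm}/q^m$ for uniform instances, and the advantage computation of Theorem~\ref{cor:lwe}. The constant check you deferred closes as in the paper: maximality of $h$ together with $h+1\le 2k$ (since $2^h<q$ gives $h\le k$) yields $2^{h-1}>q/(16m^{3/2}(h+1))\ge q/(96\sqrt3\,n^{3/2}k^{5/2})$, and since $96\sqrt3<200$ the margin over $\alpha q\log n=q/(200n^{3/2}k^{5/2})$ grows like $n^{3/2}/\log^{5/2}n$ and absorbs the additive constant.
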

\begin{proof}
Let $M=\Lambda_q(A\T)$, so that $M=\Lambda_q^\perp(H)$ by Lemma~\ref{lem:kernel}.

We first show that $F(A,t)$, and hence a sample of $\mathcal F$, can be computed in time polynomial in $n$.
Since $n$ is sufficiently large, we have $q/(4m^{3/2})\ge n^3/(12\sqrt3\,n^{3/2}k^{3/2})\ge7\cdot2^7$, so $h\ge7$, and $h\le\log_2q\le k$.
The product $\Pi$ of the moduli is $\operatorname{lcm}(1,\dots,h)\ge2^h$, by Nair's inequality \cite{Nair82}.
Moreover, there are at most $h$ moduli, each at most $h$, so $p_{\max}\le h$, $\sum_jp_j\le h^2$, and $\Pi\le h^h$.
Hence, by Proposition~\ref{prop:satlattice}, $F(A,t)$ is a formula whose clauses have at most three literals, and it can be computed in time polynomial in $n$, and so can a sample of $\mathcal F$, since $q$ can be found in time polynomial in $n$ by testing the integers in $[n^3,2n^3]$ for primality, and $\bar\Psi_\alpha$ can be sampled in polynomial time.

The formulas $F(A,t)$ have $O(n^2\log^4n)$ variables.
Indeed, by Proposition~\ref{prop:satlattice}(c), since $h\le k$, $\sum_jp_j\le h^2$, $\log_2\Pi\le h\log_2h$, $m'\le m$, and $D=q$, they have $O(mh^3\log h+m^2h\log q)=O(n^2k^4)$ variables.

Next we show that $F(A,t)$ is thin with high probability.
In LWE instances and in uniform instances alike, $A$ is uniformly random, so $\lambda_1(M)>q/8$ with probability at least $1-q^n2^{-m}\ge1-2^{-2nk}$, by Lemma~\ref{lem:lambda1} and $q^n\le2^{nk}=2^{m/3}$.
If $\lambda_1(M)>q/8$, then, since $p_{\max}\le h$ and $h2^h\le q/(4m^{3/2})$,
\[
\frac{m^{3/2}p_{\max}(2^h-1)}2<\frac{m^{3/2}h2^h}2\le\frac q8<\lambda_1(M),
\]
and so $F(A,t)$ is thin.

We then show that, for an LWE instance, $F(A,t)$ is satisfiable with probability at least $1-2m\exp(-\pi\log^2n)$.
By Proposition~\ref{prop:satlattice}(a), the satisfying assignments of $F(A,t)$ correspond to the vectors $\tilde y\in M$ with $t-\tilde y\in\mathcal B_h$.
Let $(A,t)$ be an LWE instance with error $e$, and let $\varepsilon_\chi$ be the probability that $\|e\|_\infty>2^{h-1}-1$.
Then $t-e\in M$, so $F(A,t)$ is satisfiable if $\|e\|_\infty\le2^{h-1}-1$, and hence with probability at least $1-\varepsilon_\chi$.
By the choice of $h$, we have $(h+1)2^{h+1}>q/(4m^{3/2})$, and so, since $h+1\le2k$ and $16m^{3/2}=48\sqrt3\,n^{3/2}k^{3/2}$,
\[
2^{h-1}>\frac q{16m^{3/2}(h+1)}\ge\frac q{96\sqrt3\,n^{3/2}k^{5/2}}.
\]
Since $q\ge n^3$ and $n$ is sufficiently large, it follows that $2^{h-1}-\frac12\ge q/(200n^{3/2}k^{5/2})=\alpha q\log n$.
As in the proof of Theorem~\ref{cor:lwe}, each entry of $e$ has absolute value greater than $2^{h-1}-1$ with probability at most $\Pr[|X|\ge\alpha q\log n]\le2\exp(-\pi\log^2n)$, where $X$ is normal with mean zero and standard deviation $\alpha q/\sqrt{2\pi}$.
Hence, by the union bound over the $m$ entries, $\varepsilon_\chi\le2m\exp(-\pi\log^2n)$.

We also show that, for a uniform instance, $F(A,t)$ is satisfiable with probability at most $2^{-m}$.
Let $(A,t)$ be a uniform instance.
If $F(A,t)$ is satisfiable, then $t\equiv A\T s+e'\pmod q$ for some $s\in\Z_q^n$ and $e'\in\mathcal B_h$.
There are at most $q^n2^{hm}$ such residues, so $F(A,t)$ is satisfiable with probability at most $q^n(2^h/q)^m$.
Since $2^h\le q/(4m^{3/2}h)\le q/4$ and $q^n\le2^{m/3}$, this probability is at most $2^{m/3}4^{-m}\le2^{-m}$.

It remains to construct the algorithm for decision-LWE.
Given $(A,t)$, our algorithm computes $F(A,t)$, runs the given algorithm on it, and outputs $1$ if and only if the given algorithm reports that $F(A,t)$ is satisfiable.
As in the proof of Theorem~\ref{cor:lwe}, the probabilities that it outputs $1$ on LWE instances and on uniform instances differ by at least $2n^{-c}-\varepsilon_\chi-2^{-m}$, which is at least $n^{-c}$ for sufficiently large $n$.
\end{proof}

\begin{proof}[Proof of Theorem~D]
By Proposition~\ref{prop:satlattice}(b), the SAT polytopes of thin formulas satisfy $P(F)'=P(F)_I$.
So the SAT polytope of a sample of $\mathcal F$ satisfies $P(F)'=P(F)_I$ with high probability.
Moreover, $2/3\ge1/2+1/n$ for $n\ge6$.
Hence Theorem~\ref{thm:satlwe} gives Theorem~D.
\end{proof}

Combining Theorem~\ref{thm:satlwe} with Theorem~\ref{thm:regev}, we obtain the consequence for quantum algorithms stated in the introduction.

\begin{corollary}\label{cor:satregev}
Suppose that there is a randomized polynomial-time algorithm as in Theorem~\ref{thm:satlwe}.
Then, for some $\gamma=\tilde O(n^{5/2})$, there is a polynomial-time quantum algorithm that solves GapSVP$_\gamma$ and SIVP$_\gamma$ on $n$-dimensional lattices.
\end{corollary}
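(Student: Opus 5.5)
The plan is to mirror the proof of Corollary~\ref{cor:regev}, using Theorem~\ref{thm:satlwe} in place of Theorem~\ref{cor:lwe}.

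First, let $q$, $k$, $\alpha$, and $m$ be as in Theorem~\ref{thm:satlwe}. By that theorem, the assumed algorithm yields a randomized polynomial-time algorithm that solves decision-$\LWE_{n,q,\bar\Psi_\alpha}$ with $m$ samples. Here $\alpha=1/(200n^{3/2}k^{5/2}\log n)$.

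Next, we check the hypotheses of Theorem~\ref{thm:regev}. The modulus $q\le2n^3$ is prime, and both $q$ and $m=3nk$ are bounded by a polynomial in $n$. For the remaining condition, since $q\ge n^3$ and $k=O(\log n)$, we have
\[
\alpha q\ge\frac{n^{3/2}}{200k^{5/2}\log n},
\]
which exceeds $2\sqrt n$ for sufficiently large $n$. Also $\alpha\in(0,1)$.

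Theorem~\ref{thm:regev} then gives a polynomial-time quantum algorithm for GapSVP$_\gamma$ and SIVP$_\gamma$ with $\gamma=\tilde O(n/\alpha)$. Since $1/\alpha=200n^{3/2}k^{5/2}\log n=\tilde O(n^{3/2})$, this is $\gamma=\tilde O(n^{5/2})$.

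There is no real obstacle here. The only points needing care are the inequality $\alpha q>2\sqrt n$ and the fact that the notion of solving decision-LWE used in Theorem~\ref{thm:satlwe} is the one accepted by Theorem~\ref{thm:regev}; the remark after Theorem~\ref{thm:regev} already addresses the latter.
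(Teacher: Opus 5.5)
Your proposal is correct and follows the paper's proof essentially line for line. You obtain a decision-LWE algorithm from Theorem~\ref{thm:satlwe}, verify the hypotheses of Theorem~\ref{thm:regev} (in particular $\alpha q\ge n^{3/2}/(200k^{5/2}\log n)>2\sqrt n$ for large $n$), and conclude $\gamma=\tilde O(n/\alpha)=\tilde O(n^{5/2})$.
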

\begin{proof}
Let $q$, $k$, $\alpha$, and $m$ be as in Theorem~\ref{thm:satlwe}.
By Theorem~\ref{thm:satlwe}, there is a randomized polynomial-time algorithm that solves decision-$\LWE_{n,q,\bar\Psi_\alpha}$ with $m$ samples.
Theorem~\ref{thm:regev} applies, since $q\le2n^3$ is prime, $q$ and $m=3nk$ are bounded by a polynomial in $n$, and $\alpha q\ge n^3/(200n^{3/2}k^{5/2}\log n)>2\sqrt n$ for sufficiently large $n$, and it gives $\gamma=\tilde O(n/\alpha)=\tilde O(n^{5/2})$.
\end{proof}

\subsection{Refutations and set partitioning polytopes}\label{sec:satremarks}

We conclude with two remarks, on refutations and on set partitioning polytopes.

\begin{remark}\label{rem:findw}
As with thin parallelepipeds in Remark~\ref{rem:refutation}, the SAT polytopes of thin formulas $F=F(p,h,H,D,t)$ and their rational subpolyhedra that contain no integral vector have empty Chv\'atal closure, by Proposition~\ref{prop:satlattice}(b), and so, by Helly's theorem, refutations with at most $N+1$ CG cuts, where $N$ is the number of variables of $F$.
The vectors $w$ of the proof of Proposition~\ref{thm:crt} play the role of the short dual basis, and both these vectors and refutations are hard to find.

We now show that the vectors $w$ and refutations are hard to find.
Given the vectors $w$, we compute $z$ and $z'$ with $Dw=H\T z+Dz'$, as in the proof of Proposition~\ref{thm:crt}, by solving a system of linear Diophantine equations \cite[Corollary~5.3b]{Schrijver86}, and then the CG cuts of that proof by linear programming.
As in that proof, these cuts decide whether the functional formulation from which $F$ is obtained has an integral vector.
By Lemma~\ref{lem:exactone}, this decides satisfiability of $F$.
So, by Theorems~\ref{thm:satbdd} and~\ref{thm:satlwe}, finding the vectors $w$ is as hard as BDD$_{1/(4n^{5/2})}$ and, since the formulas of Theorem~\ref{thm:satlwe} are thin with high probability, as hard as decision-LWE.
The same holds for finding any refutation, since, as in Remark~\ref{rem:refutation}, an algorithm that finds a refutation whenever one exists decides satisfiability for thin formulas.
\end{remark}

\begin{remark}\label{rem:setpart}
Theorems~C and~D also hold for integer feasibility on every class of polytopes that contains the SAT polytopes of formulas whose clauses have at most three literals, for instance the polytopes in $[0,1]^N$ defined by inequalities with coefficients in $\{0,\pm1\}$, integral right-hand sides, and at most three nonzero coefficients each.
They also hold for \emph{set partitioning polytopes} $\{x\ge\0: Wx=\1\}$, where $W$ is a $0/1$ matrix, with at most three ones in each row, although this class does not contain these SAT polytopes.
Without the promise, integer feasibility on these set partitioning polytopes is NP-complete, even with exactly three ones in each row \cite{Schaefer78}.

We now prove the statement for set partitioning polytopes.
Let $F$ be one of the formulas of Proposition~\ref{prop:satlattice}, and let $X(\mathcal T_3)$ be the face of $P(F)$ from the proof of Lemma~\ref{lem:exactone}.
Here $\mathcal T_3$ is a family of sets of at most three literals, and $X(\mathcal T_3)$ contains all integral vectors of $P(F)$.
Adding a variable $\bar x_i$ and the equation $x_i+\bar x_i=1$ for every $i$, and writing every literal $\bar x_i$ as this variable, transforms $X(\mathcal T_3)$ into a set partitioning polytope $\{x\ge\0: Wx=\1\}$ whose matrix $W$ has at most three ones in each row.
This transformation is a map as in Lemma~\ref{lem:facelift}, so the integral vectors correspond one-to-one.
Every rational subpolyhedron of $X(\mathcal T_3)$ is one of $P(F)$, so, by Lemma~\ref{lem:facelift}, every rational subpolyhedron of the set partitioning polytope has Chv\'atal rank at most one whenever every rational subpolyhedron of $P(F)$ does.
Every query of Theorem~\ref{thm:satbdd} and every formula $F(A,t)$ of Theorem~\ref{thm:satlwe} is a formula of Proposition~\ref{prop:satlattice}, so this applies to all of them.
\end{remark}

\bigskip
\begin{small}\sloppy
\paragraph{Funding.} A. Del Pia is partially supported by AFOSR Grant \mbox{FA9550-23-1-0433} and ONR Grant \mbox{N00014-25-1-2490}. Any opinions, findings, conclusions, or recommendations expressed in this material are those of the author and do not necessarily reflect the views of the Air Force Office of Scientific Research or the Office of Naval Research.

\paragraph{AI declaration.} AI tools (Claude, developed by Anthropic) were used for proofreading and suggesting improvements to the exposition. The author takes full responsibility for the content of the paper.
\end{small}

{\small
\bibliographystyle{plain}
\bibliography{refs}
}

\end{document}